\documentclass[10pt]{amsart}

\usepackage[english]{babel}
\usepackage[a4paper,top=2cm,bottom=2cm,left=3cm,right=3cm]{geometry}
\usepackage{float}

\usepackage{comment}
\usepackage[usenames,dvipsnames,table]{xcolor}
\newcommand{\newcomment}[4]{%
\newcounter{#2counter}
\expandafter\newcommand    \csname #1\endcsname[1]{%
\refstepcounter{#2counter}%
{\color{#4}(#3\arabic{#2counter})}\marginpar{\scriptsize\raggedright\textbf{\color{#4}(#2 \arabic{#2counter}):} ##1}%
}}

\newcommand\translate[2]{\overrightarrow{#1}^{#2}}
\usepackage{xurl}
\usepackage[shortlabels]{enumitem}
\usepackage{graphicx}
\usepackage{subcaption}
\newcomment{rafa}{Rafa}{R}{blue}
\newcomment{mik}{Mikael}{M}{red}

\usepackage{graphicx}
\usepackage{amsmath,amssymb,amsthm}
\usepackage{mathtools}
\usepackage[colorlinks=true, allcolors=blue]{hyperref}
\newcommand{\R}{\mathbf{R}}
\newcommand{\C}{\mathbf{C}}
\newcommand{\N}{\mathbf{N}}
\newcommand{\Z}{\mathbf{Z}}

\newcommand{\conv}{\operatorname{conv}}
\newcommand{\aff}{\operatorname{Aff}}
\newcommand{\ocone}[2]{\operatorname{Cone}^-(#1,#2)}

\newtheorem{theorem}{Theorem}[section]
\newtheorem{lemma}[theorem]{Lemma}
\newtheorem{proposition}[theorem]{Proposition}
\newtheorem{corollary}[theorem]{Corollary}
\newtheorem{definition}[theorem]{Definition}
\newtheorem{example}[theorem]{Example}
\newtheorem{remark}[theorem]{Remark}

\title{A construction of Kakeya Sets in Arbitrary Dimension}  
\author{Rafael J. Fernández-Delgado}\thanks{The first author
has received financial support from Spain's Ministerio de Ciencia, Innovación y Universidades
through a FPU Grant FPU24/00173.
}
\author{Mikael de la Salle}
\date{\today}

\begin{document}
\maketitle

\begin{abstract}  

 We construct Kakeya sets in arbitrary dimension $d\geq 2$, generalizing the classical Perron tree construction beyond dimension $2$. The Kakeya sets we construct have $\delta$-neighbourhood of volume at most $C|\log\delta|^{-(d-1)}$, which improves on previously known constructions, and which is conjecturally optimal. We further derive consequences for the $L^p$-boundedness of radial Fourier multipliers in terms of Besov spaces with logarithmic smoothness.
\end{abstract}

\section{Introduction}  
A Kakeya set in $\R^d$ is a subset containing a unit line segment in every direction. Originally a topic in recreational mathematics, Kakeya sets have evolved into a subject with profound connections to core mathematical fields, ranging from harmonic analysis, with Fefferman's multiplier problem for the ball \cite{RefWorks:1971multiplier}, to number theory \cite{RefWorks:bourgain2006remarks}. Recently the second-name author also realised their connection with operator algebras and the structure of the von Neumann algebras of high rank lattices \cite{delasalle2026kakeyaconjecturehighranklattice}. It is at first hand quite surprising that, in dimension at least $2$, there exist Kakeya sets of Lebesgue measure $0$. The classical construction of such Kakeya sets in $\R^2$, due to Besicovitch, simplified by Perron and further refined by Schoenberg \cite{Schoenberg1988OnTB} and Keich \cite{RefWorks:keich1999lp}, relies on the use of \emph{Perron trees} -- a family of line segments arranged in a hierarchical, tree-like structure. In this paper, we are interested in quantitative aspects of Kakeya sets. The most useful way to measure smallness of Kakeya sets $E$ will be described below, but for the moment let us decide to capture the smallness of $E$ as the behaviour, as $\delta$ goes to $0$, of the Lebesgue measure of the $\delta$-neighbourhood of $E$. In this perspective, the Perron tree construction is efficient, yielding Kakeya sets not only of Lebesgue measure $0$, but also optimal in the sense that the measure of their $\delta$-neighbourhood is $O(\frac{1}{|\log \delta|})$ as $\delta$ goes to $0$, thus being the best possible.

Extending this approach to higher dimensions has traditionally been limited to taking Cartesian products:
\begin{example} If $E \subset \R^2$ is an optimal Kakeya set, then $E \times [0,1]^{d-2}$ is a Kakeya set in $\R^d$, and the measure of its $\delta$-neighbourhood is $O(\frac{1}{|\log \delta|})$.
\end{example}
\begin{example}
  If $E \subset \R^2$ is an optimal Kakeya set and $d$ is even, then $E\times\dots \times E$ ($d/2$ times) is a Kakeya set in $\R^d$, and the measure of its $\delta$-neighbourhood is $O(\frac{1}{|\log \delta|^{d/2}})$.
\end{example}
While this method produces valid Kakeya sets that indeed have measure $0$, it fails to exploit the full geometric complexity of higher-dimensional spaces.

The Kakeya conjecture, which predicts that Kakeya sets cannot be too
small and must have full Hausdorff dimension,
remains a tantalizing open problem. Although not formally equivalent,
in first approximation it predicts that the measure of the
$\delta$-neighbourhood of a Kakeya set cannot decay as $O(\delta^\epsilon)$
for some $\epsilon>0$. While the conjecture has been long resolved
in two dimensions by Davies and Córdoba \cite{RefWorks:davies1971remarks,RefWorks:cordoba1977kakeya} using different methods, in higher dimensions, the problem becomes
significantly more challenging. The prevailing belief is that the
conjecture is \emph{harder} to prove in higher dimensions, due to the
increased combinatorial and geometric complexity.  And only recently
the three dimensional case has been settled in a celebrated
breakthrough by Wang and Zahl \cite{RefWorks:wang2025volume}.

Yet, paradoxically, as far as we are aware\footnote{No mention of known constructions where provided when the second-named authors asked this question on Mathoverflow in April 2025 \cite{MOQuestion}.}, prior to our work, there has not been any explicit construction of Kakeya sets in  $\R^d$  (for  $d \geq 3$) that demonstrates a quantitative advantage over the $2$-dimensional case. However, after a first version of this paper was made public on arXiv, Sam Craig informed of his work \cite{CraigJGA}, where he constructed non-trivial Kakeya sets in arbitrary dimensions.

In this paper, we present a construction of Kakeya sets in arbitrary dimension $d > 2$, which generalizes the Perron tree construction in dimension $2$. This construction does not seem to appear in the literature, although some forms of it has certainly been known to experts. As pointed out to us by Anthony Carbery when we communicated to him a preliminary version of our paper, such constructions were at least in the air in the 70's and 80's. We believe that, in particular in the light of the recent work \cite{delasalle2026kakeyaconjecturehighranklattice}, these constructions deserve to appear explicitly. 

\begin{theorem}\label{thm: main_kakeya} For every $d\in \N$, there exists a Kakeya set $E\subset \R^d$ and a positive real number $C$ such that for every $\delta \in (0,1)$, the volume of the $\delta$-neighbourhood of $E$ is less than  $\frac{C}{|\log\delta|^{d-1}}$.
\end{theorem}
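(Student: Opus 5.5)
We build a $d$-dimensional Perron tree by iterating the planar ``bisect and translate'' step independently in each of the $d-1$ transverse coordinates. This works because the whole construction is governed by a single quantity, the measure of the $\delta$-neighbourhood, together with a compactness/limiting argument.

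\textbf{Step 1: reduction to a finite-scale statement.} Parametrize directions (up to finitely many rotated copies, which only multiply the bound by a constant) as $(1,a)$ with $a\in[0,1]^{d-1}$. It suffices to prove the following: for each $n$ there is a set $E_n$, a union of $2^{n(d-1)}$ segments $\{(t,b_Q+ta):t\in[0,1]\}$, one for each dyadic cube $Q$ of side $2^{-n}$ in $[0,1]^{d-1}$ containing $a$, such that the union of the corresponding thickened ``tubes'' (parallelograms of cross-section $2^{-n}\times\dots\times 2^{-n}$) has measure $\lesssim n^{-(d-1)}$. Given this, a $\delta$-neighbourhood with $\delta\approx 2^{-n}$ is contained in such a tube union, giving $C|\log\delta|^{-(d-1)}$ at the scale $\delta$. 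To obtain one set $E$ working for all $\delta$ simultaneously, we would use the standard planar trick (Keich/Schoenberg). We nest the constructions so that each $E_{n+1}$ lies in the $2^{-n}$-neighbourhood of $E_n$ at the relevant scale. Alternatively, we glue scales via a Cantor-type diagonal argument and take a Hausdorff limit of compact sets, using that containing a segment in every direction is closed under Hausdorff limits. I expect this gluing to be technical but routine once the finite-scale statement holds with the right nesting.

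\textbf{Step 2: the product structure of the finite construction.} In dimension $2$, Keich's construction gives translations $b(a)$, for $a\in 2^{-n}\Z\cap[0,1)$, namely $b(a)=-\sum_{k=1}^n \frac{k}{n}\,\epsilon_k(a)2^{-k}$ (up to normalization), where $\epsilon_k(a)$ are the binary digits. With these translations the tube union has area $\lesssim 1/n$. Define in $\R^d$
\[ b_Q=(b(a_1),\dots,b(a_{d-1})),\qquad Q \ni a=(a_1,\dots,a_{d-1}).\]
The tube for $Q$ at height $t$ then has cross-section $\prod_j [b(a_j)+ta_j, b(a_j)+ta_j+2^{-n}]$. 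Hence the slice of the union at height $t$ is exactly the product of the one-dimensional slices of the planar union. This contrasts with the naive product of Example~1.3, where the first coordinate would be a separate time variable for each factor.

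\textbf{Step 3: the volume estimate.} The volume equals $\int_0^1 \prod_{j}|S_t|\,dt=\int_0^1|S_t|^{d-1}dt$, where $S_t\subset\R$ is the slice of the planar Keich set at height $t$. So we need an $L^{d-1}$ bound on the slice function, $\int_0^1|S_t|^{d-1}dt\lesssim n^{-(d-1)}$, rather than just the $L^1$ bound on area. This is the main obstacle. The planar analysis gives $|S_t|\lesssim 1/n$ for $t$ away from the extremes, but near $t$ of order $k/n$ boundaries the slices can be large. We would redo Keich's computation slice-wise. For $t\in[m/n,(m+1)/n]$, the digits $k$ with $k/n$ close to $t$ nearly cancel, and the remaining digits produce a set of measure $\lesssim \frac1n+\min(1,\dots)$, decaying geometrically in the distance from the cancellation. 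The intended result is a pointwise bound $|S_t|\lesssim 1/n$ uniformly for $t\in[0,1]$, with possibly a boundary layer of length $O(1/n)$ near $t=0$ where $|S_t|\le 1$. Such a layer contributes $\int_0^{1/n}1\,dt = 1/n$, which is too large. If the pointwise bound fails near an endpoint, we would cut off that region by choosing the segments to be $[1/2,1]$-portions of longer lines, i.e.\ rescaling time so that only the ``good'' range of $t$ is used. This costs only a constant factor, since a segment of length $1/2$ in every direction can be dilated by $2$.
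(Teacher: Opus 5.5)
Your Steps 1--2 are sound. They are exactly the observation in the paper's remark after Theorem~\ref{theo: perron tree}: for the dyadic cube the translations act coordinatewise, so $T^{n,d}_h=(T^{n,2}_h)^{d-1}$ and the volume becomes $\int|S_t|^{d-1}\,dt$. The paper's main proof does not use slices at all. In Proposition~\ref{prop: perron tree}, what sticks out of the shrunken core at each step lies in homothetic copies of the $T_i$ of ratio $t/(1-c)$ (Lemma~\ref{lemma: lemma containment peaks}). The resulting factor $t_i^d$ gives $n^{1-d}$ for $t_i=\frac{1}{n-i+(1-c)^{-1}}$, for any polytope with a shrinking partition.

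\textbf{First gap: Step 3.} On your route this step carries the whole theorem, and you leave it as an ``intended result''. The remark fills it with Keich's bound $\sup_h|T^{n,2}_h|<\frac1n$, for the translations $\mathbf{w}_i=-\sum_j\boldsymbol\beta_j2^{-j}\big(1-\frac{j-1}{n+1}\big)$. There digit $j$ cancels at height $\frac{j-1}{n+1}$ above the base, so coarse digits cancel at the base and fine ones near the apex. Your $b(a)=-\sum_k\frac kn\epsilon_k2^{-k}$, with apex at $t=0$, makes digit $k$ cancel at distance $\frac kn$ from the apex, which is the reverse order. So you cannot quote Keich's bound.

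In fact the uniform bound you aim for fails for your $b$. At $t=1$ the slice is $\{\sum_k\epsilon_k2^{-k}(1-\frac kn)\}+[0,2^{-n}]$. Each of the $\approx\log_2 n$ finest binary levels creates gaps shorter than $2^{-n}$, so it is filled in and contributes about $\frac1n$; the measure is $\asymp\frac{\log n}{n}$. The excess lives on a window of width $\sim\frac{\log n}{n}$ near $t=1$, so $\int_0^1|S_t|^{d-1}dt=O(n^{1-d})$ is still plausible. But that is precisely the estimate you have not proved.

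Note also that Keich's bound concerns the triangles, not your $2^{-n}$-thick parallelograms. The paper never analyses thickened slices. Instead it uses Lemma~\ref{lemma: balls inside center of pyramid}: the $2\cdot 2^{-n}$-neighbourhood of each pyramid fits in a translate of its $C$-dilate, so one works inside a tree scaled by $C$. This is the precise version of your ``$[1/2,1]$-portion'' fallback.

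\textbf{Second gap: passing to a single set $E$.} The trees for different $n$ are not nested. A Hausdorff limit keeps the Kakeya property but controls $|\mathcal{N}_\delta(E)|$ only if the Hausdorff distances decay like the scales, which again requires nesting. What you need is a sequence with $\mathcal{N}_{\delta_n}(F_n)\subset\mathcal{N}_{\delta_{n-1}}(F_{n-1})$, and it has to be built.

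The paper does this in Lemma~\ref{lem:local_kakeya} and Proposition~\ref{prop: prop kakeya}. Take a net of directions at scale $\varepsilon_n\approx\delta_{n-1}$ and a unit segment of $F_{n-1}$ in each. Replace each segment by a copy of the scale-$\delta_n/\varepsilon_n$ construction, compressed by $2\varepsilon_n$ in the $d-1$ transverse directions, so it stays in the $\sqrt{d-1}\,\varepsilon_n$-neighbourhood of that segment. The $\approx\varepsilon_n^{1-d}$ pieces each cost $\varepsilon_n^{d-1}f(\delta_n/\varepsilon_n)$. With $\delta_n=2^{-2^n}$, the resulting loss $\delta\mapsto\delta^{1/4}$ is invisible for a logarithmic bound.
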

In the vocabulary of Keich \cite{RefWorks:keich1999lp}, the exact Minkowski dimension of Kakeya sets in dimension $d$ is less than $\delta \mapsto \delta^d |\log \delta|^{d-1}$.

This result aligns with the Kakeya and other related conjectures, most likely being optimal. The reverse
Littlewood--Paley conjecture considered by Carbery \cite{carbery2015remarkreverselittlewoodpaleyrestriction}, for instance, implies that any Kakeya set has a $\delta$-neighbourhood volume greater than $\frac{C_d}{|\log\delta|^{d-1}}$ (see subsection~\ref{subsection:optimality}).

Theorem~\ref{thm: main_kakeya} should be compared to \cite[Proposition 3.1]{CraigJGA}. There, Craig constructs Kakeya sets in arbitrary dimension $d$,  satisfying the same conclusion as in Theorem~\ref{thm: main_kakeya}, but with volume estimates $C\Big(\frac{\log |\log \delta|}{|\log \delta|}\Big)^{d-1}$. So our construction improves on the $\log \log$ factor. More importantly, it allows us to infer additional geometric properties for $\delta$-tube configurations, that are essential for harmonic analysis applications that we discuss below.
\subsection{Application to Fourier analysis}

We now state a variant of Theorem~\ref{thm: main_kakeya}, better suited for applications to Fourier analysis. Here a $\delta$-tube $R$ is the image by an isometry of $[0,1]\times B_{\R^{d-1}}(0,\delta)$, and in that case, for $\alpha>0$, we denote by $\translate{R}{\alpha}$ the image of $[1+\alpha,2+\alpha]\times B_{\R^{d-1}}(0,\delta)$, that is, the translate of $R$ by $1+\alpha$ in the direction of its axis.
\begin{theorem}\label{thm:main_tubes} Let $d \geq 2$ and $\alpha\in (0,\infty)$. There is a constant $C_{d,\alpha}$ and, for every $\delta\in(0,1)$, a finite family $\delta$-tubes $R_1,\dots,R_N$ in $\R^d$ whose translates $\translate{R_i}{\alpha}$ are pairwise disjoint but
  \[ \frac{\big|\bigcup_{i=1}^N R_i\big|}{\sum_{i=1}^N |R_i|} \leq \frac {C_{d,\alpha}}{|\log \delta|^{d-1}}.\]
\end{theorem}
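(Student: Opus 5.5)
Our plan is to build the tube family by a higher-dimensional Perron tree, organised as a product of $d-1$ independent planar Perron constructions. We parametrise directions by slopes: a direction is $(1,s)$ with $s=(s_1,\dots,s_{d-1})\in[0,1]^{d-1}$. Choose an integer $n\approx c\log|\log\delta|$ and $k\approx |\log\delta|/n$, so that the number of directions per coordinate is $M=2^{nk}$, chosen with $M\approx \delta^{-1}$ up to constants depending on $\alpha$. Fix the standard planar Perron tree data: for each coordinate $j$, a shift map $a^{(j)}\colon\{0,\dots,M-1\}\to\R$. These come from the Schoenberg/Keich construction with parameter $\alpha$. The requirement is that the planar triangles (or $\delta$-tubes) $\{(x,a(m)+s x): x\in[0,1],\, s\in[m/M,(m+1)/M]\}$ have union of area $\lesssim 1/|\log\delta|$, while their "reaches" $x\in[1+\alpha,2+\alpha]$ are disjoint.

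The key observation is that the set
\[\big\{(x,y)\in \R\times\R^{d-1} : y_j=a^{(j)}(m_j)+s_j x+O(\delta)\ \text{for all } j\big\}\]
is the fibrewise product over $x$ of the planar sets. Define $R_{m}$, for $m=(m_1,\dots,m_{d-1})$, to be the $\delta$-tube with axis $x\mapsto (x, a^{(1)}(m_1)+m_1x/M,\dots)$. Each fibre $\{x\}\times\R^{d-1}$ of $\bigcup_m R_m$ is then contained in the product over $j$ of the fibres of the planar unions $E_j$, with $\delta$ replaced by $C\delta$ because a round cross-section sits inside a cube. Fubini gives
\[\Big|\bigcup_m R_m\Big|\le \int_0^1 \prod_j |E_j\cap\{x\}|\,dx .\]
Here lies the main obstacle. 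The planar estimate controls only $\int |E_j\cap\{x\}|\,dx\lesssim 1/|\log\delta|$, but we need the product bound $\prod_j(\cdots)\lesssim|\log\delta|^{-(d-1)}$. The integral of a product is not the product of integrals, so we need a uniform fibre bound: $\sup_{x\in[0,1]}|E_j\cap\{x\}|\lesssim 1/|\log\delta|$ up to an exceptional small set of $x$. I would address this by revisiting the Perron tree and choosing it so that every vertical slice of the union has length $O(1/|\log\delta|)$. The natural approach is Keich's version, where the bisection/translation happens along the whole interval and the sliced measure is controlled by a sum $\sum_{\ell}$ of geometric errors. One subtlety: if the slices are small only for $x\ge x_0$, we can place the vertex region near $x\in[0,1/|\log\delta|]$ and bound that piece trivially by $|\log\delta|^{-1}\cdot|\log\delta|^{-(d-2)}$. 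That bound requires the other factors to be small too; a crude bound of $1$ there is not enough for $d\ge3$. So instead I would rescale the tree so that the slice bound holds uniformly on all of $[0,1]$, using the self-similar structure of the Perron tree at each scale.

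Disjointness of the translates $\translate{R_m}{\alpha}$ follows coordinatewise. Two tubes with $m\ne m'$ differ in some coordinate $j$, so their projections to the $(x,y_j)$-plane are planar translated tubes with distinct indices. In the planar construction these are disjoint on $x\in[1+\alpha,2+\alpha]$. A disjoint projection forces the $d$-dimensional translates to be disjoint, after accounting for the small tilt of the axis, which costs only a constant in $\delta$. Finally, $\sum_m|R_m|\approx M^{d-1}\delta^{d-1}\approx 1$ because $M\approx\delta^{-1}$. Combined with the union bound this gives the ratio $\lesssim C_{d,\alpha}|\log\delta|^{-(d-1)}$. For $\delta$ not of the form tied to $M$, monotonicity and adjusting constants handle the rest.
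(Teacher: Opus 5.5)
Your architecture (a product of $d-1$ planar Perron trees, Fubini in the axial variable, and disjointness of the reaches via projection onto a single $(x,y_j)$-plane) is viable. It is essentially the alternative argument the paper records in its remark for the dyadic cube, where the slice of the $d$-dimensional tree at height $h$ is $(T^{n,2}_h)^{d-1}$. However, you leave unproved the one estimate on which that route rests.

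As you note, the planar area bound alone is useless here. Since slices have length $O(1)$, it gives only
\[\int\prod_j|E_j\cap\{x\}|\,dx\le \Big(\prod_{j\ge 2}\sup_x|E_j\cap\{x\}|\Big)\int|E_1\cap\{x\}|\,dx\lesssim|\log\delta|^{-1},\]
which is no better than $E\times[0,1]^{d-2}$. What you need is the uniform bound $\sup_x|E_j\cap\{x\}|\lesssim|\log\delta|^{-1}$ over the whole $x$-range of the tubes. Your remedy (``choose the tree so every slice is small'', ``rescale using self-similarity'') restates the goal rather than proving it.

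This uniform bound is a genuine theorem, not a formality. It holds for Keich's tree with its specific increasing translation schedule, $\mathbf{w}_i=-\sum_j\boldsymbol\beta_j2^{-j}\big(1-\frac{j-1}{n+1}\big)$, i.e.\ $t_i=1/(n-i+(1-c)^{-1})$ in the paper's notation, and Keich proves $\sup_h|T^{n,2}_h|<1/n$ for it. It is sensitive to that schedule: the classical equal-step analysis does not even give area $O(1/n)$. So you must either cite or prove Keich's slice estimate for these explicit translations, or avoid slices altogether as the paper's main proof does.

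The paper runs the Perron induction directly in $\R^d$. The part of each translated pyramid lying outside the shrunken core $\mathcal{H}^{1-t}_{\mathbf o}(T)$ is contained in a homothetic copy of ratio $t/(1-c)$, so each step costs $t^d|T|$ rather than $t^2|T|$. With the above $t_i$ this yields volume $\lesssim|T|\,n^{1-d}$ with no slice information. Lemma~\ref{lemma: tubes inside perron} then places $\delta\approx 2^{-n}$-tubes near the apices, and the affinely separated opposite cones give disjoint reaches.

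Two smaller issues also need attention.

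\textbf{Parameters.} A Perron tree with $g$ generations gives slices, and area, of order $1/g$. You therefore need $g\approx|\log\delta|$ binary generations with $M=2^g\approx\delta^{-1}$. If your $n\approx\log|\log\delta|$ counts generations, you would only obtain $(\log|\log\delta|)^{-(d-1)}$.

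\textbf{Tilt.} The tilt is not small, since slopes range over $[0,1]^{d-1}$. A unit $d$-dimensional tube starting at $x=0$ has reach with $x$-range $[(1+\alpha)/L,(2+\alpha)/L]$, where $L=|(1,s)|\le\sqrt d$. This need not lie in the planar window $[1+\alpha,2+\alpha]$ on which you assumed disjointness, so the fix is more than a constant change in $\delta$. It is repairable in either of two ways:
\begin{itemize}
\item choose the unit sub-segment of each tube so that its reach lies beyond all crossing points of the planar axes;
\item phrase disjointness through the reflected cones through the apices, as the paper does. For the cube tree the translations act coordinatewise, so the $d$-dimensional opposite cones project exactly onto the planar ones.
\end{itemize}
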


\begin{figure}[htbp]
    \centering
  \makebox[0pt][c]{\hspace{8cm}\begin{minipage}{0.48\textwidth}
    \includegraphics[width=\textwidth]{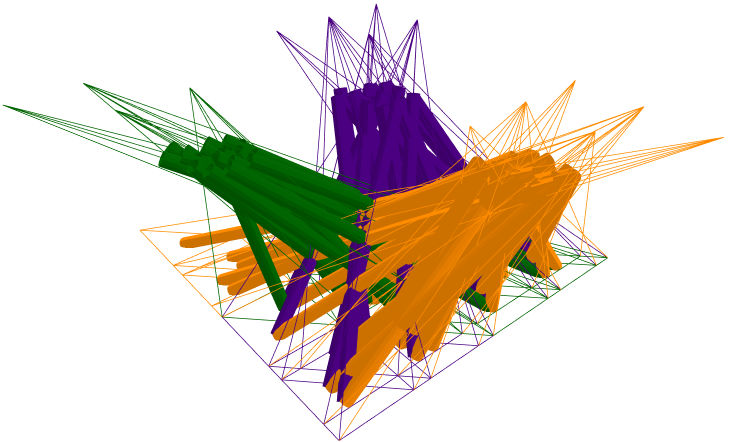}
  \end{minipage}}
  \hfill
   \makebox[0pt][c]{\hspace{-8cm}\begin{minipage}{0.48\textwidth}
    \includegraphics[width=\textwidth]{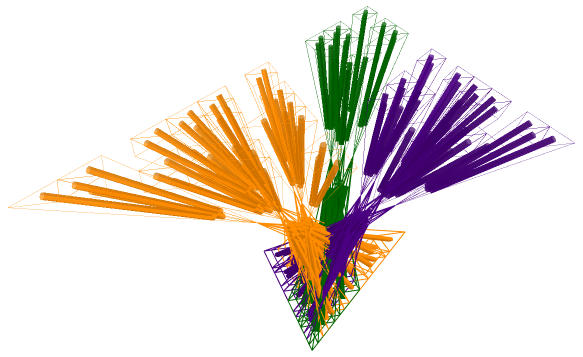}
  \end{minipage}}
    \caption{Illustration of the proof of Theorem \ref{thm:main_tubes} for $d=3$. The first image shows the tubes translated as a block within their respective pyramid during the $2$-iterations simplex Perron tree construction. The second includes the reflected pyramids across their respective apexes, which by construction contain the $\translate{R_i}{\alpha}$'s.}
\end{figure}
Using the results of the second-named author \cite{delasalle2026kakeyaconjecturehighranklattice}, we obtain the following consequence, which was our motivation for this work. The result is stated in terms of Besov spaces with logarithmic smoothness  (see subsection~\ref{subsection:besov} for definitions). 

It improves on \cite{delasalle2026kakeyaconjecturehighranklattice}, where the result was proved with $b=|\frac 1 p - \frac 1 2|$. The result is only relevant for $|\frac 1 p - \frac 1 2| \leq \frac{1}{2d}$, because otherwise it is known that $f$ is H\"older-continuous. 

\begin{corollary}\label{cor:radial_mult_sobolev}
  Let $1<p<\infty$ and $m:(0,\infty) \to \R$ a bounded measurable function such that $T$, the Fourier multiplier with symbol $m(|\cdot|)$ is bounded on $L_p(\R^d)$. Then $f:t \mapsto  m(\exp t)$ belongs to $B_{\infty,\infty}^{0,b} (\R)$ for $b=(d-1)\Big|\frac 1 p - \frac 1 2\Big|$.
\end{corollary}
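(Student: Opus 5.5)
The corollary is the combination of Theorem~\ref{thm:main_tubes} with the transference machinery of \cite{delasalle2026kakeyaconjecturehighranklattice}. There, a family of $\delta$-tubes $R_1,\dots,R_N$ with pairwise disjoint translates $\translate{R_i}{\alpha}$ and small overlap ratio
\[\rho(\delta)=\frac{|\bigcup_i R_i|}{\sum_i|R_i|}\]
is turned into a lower bound on the $L_p$ operator norm of radial multipliers. The plan is to rerun that argument with $\rho(\delta)\le C_{d,\alpha}|\log\delta|^{-(d-1)}$ from Theorem~\ref{thm:main_tubes} in place of the product-type bound used there, which gave only exponent $1$. We then track how the exponent of $|\log\delta|$ propagates into the logarithmic smoothness index $b$.

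\textbf{Step 1 (tubes to a Fefferman-type inequality).} Fix a frequency scale $\lambda=e^{t}$ and an increment $h$, and set $\delta\approx h$ or $\delta\approx\lambda^{-1}$ as dictated by the Besov modulus. Test $T$ on wave packets $f_i$ adapted to the $R_i$, modulated in the direction of the axis at frequency $\approx\lambda$. The radial symbol restricted to that direction acts approximately as the one-dimensional multiplier $m$ near $\lambda$. The second difference $\Delta_h^2 f$ at $t$ is then detected by the mass that $T f_i$ moves from $R_i$ into $\translate{R_i}{\alpha}$. This is the step where disjointness of the translates is used, following the Fefferman/Córdoba argument.

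\textbf{Step 2 (the two norms).} Apply Khintchine/Rademacher randomization to $\sum_i \epsilon_i f_i$. Since the translates are disjoint, H\"older's inequality on $\bigcup_i R_i$ versus the disjoint union gives, for $p>2$,
\[|\Delta^2_h f(t)|\lesssim \|T\|_{p\to p}\,\rho(\delta)^{\frac12-\frac1p}.\]
The case $p<2$ follows by duality, since $T$ is self-adjoint up to conjugation and the $L_p$ and $L_{p'}$ norms of $T$ coincide. Taking $\delta\approx e^{-c/|h|}$, or whatever relation the construction in \cite{delasalle2026kakeyaconjecturehighranklattice} imposes between $h$ and $\delta$, turns $\rho(\delta)^{|1/p-1/2|}$ into $|\log\delta|^{-(d-1)|1/p-1/2|}$. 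Under that relation $|\log\delta|^{-1}\approx|h|$, or the corresponding logarithmic weight, and this is exactly the defining weight of $B^{0,b}_{\infty,\infty}$ with $b=(d-1)|1/p-1/2|$.

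\textbf{Step 3 (taking the supremum).} Take the supremum over $t$ and $h$, and add the trivial $L_\infty$ bound $\|f\|_\infty\le\|m\|_\infty\le\|T\|$. Together these give $f\in B^{0,b}_{\infty,\infty}(\R)$.

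\textbf{Main obstacle.} The hard part is the exact dictionary in Step 1 between $\delta$, the Besov increment, and the translation parameter $\alpha$. In particular, one must check that the dependence of $C_{d,\alpha}$ on $\alpha$ is harmless, since a range of $\alpha$ may be needed to capture all increments. My plan is to quote the abstract statement from \cite{delasalle2026kakeyaconjecturehighranklattice} in the form "if tube families with ratio $\le C|\log\delta|^{-\beta}$ and disjoint $\alpha$-translates exist, then $f\in B^{0,\beta|1/p-1/2|}_{\infty,\infty}$". With that statement, the corollary reduces to Theorem~\ref{thm:main_tubes} with $\beta=d-1$. If that paper only records the case $\beta=1$, I would verify that its proof uses the exponent solely through the ratio bound, so the generalization is mechanical.
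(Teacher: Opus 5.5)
Your reduction, feeding Theorem~\ref{thm:main_tubes} into the transference theorem of \cite{delasalle2026kakeyaconjecturehighranklattice}, is the paper's. But the step that turns the tube bound into the logarithmic Besov weight is wrong as written, and you rightly flag that step as the crux.

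First, your inequality $|\Delta_h^2 f(t)|\lesssim\|T\|\rho(\delta)^{\frac12-\frac1p}$ contains nothing that restricts $\delta$. Since $\rho(\delta)\to0$, letting $\delta\to0$ would force $\Delta_h^2 f\equiv0$. The actual input is Theorem~\ref{thm:regularity_Fourier_multipliers}, which reads, with $f(t)=m(e^t)$,
\[\|W_n\ast f\|_\infty\le C_d\|T\|_{L_p\to L_p}\inf_{\delta\in(0,1)}\Big(f_d(\delta)^{|\frac1p-\frac12|}+\frac{1}{2^n\delta^2}\Big),\qquad n\ge0.\]
The second term is useless unless $\delta\gg2^{-n/2}$, and it is this term that ties $\delta$ to the frequency scale.

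Second, your relation $\delta\approx e^{-c/|h|}$, i.e.\ $|\log\delta|^{-1}\approx|h|$, turns $|\log\delta|^{-b}$ into $|h|^{b}$. That is the weight of the H\"older--Zygmund space $B^{b}_{\infty,\infty}$, not of $B^{0,b}_{\infty,\infty}$, whose weight is $(1+n)^{-b}$ at level $2^n$, roughly $|\log|h||^{-b}$. The resulting claim is also false. For $d=2$ and $\frac43<p<4$, the Bochner--Riesz multipliers $(1-|\xi|^2)_+^\lambda$ are $L_p$-bounded for every $\lambda>0$ (Carleson--Sj\"olin). Yet $t\mapsto(1-e^{2t})_+^\lambda$ has second differences of order $|h|^\lambda$ near $t=0$, which is not $O(|h|^b)$ when $0<\lambda<b=|\frac1p-\frac12|$.

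The correct dictionary is polynomial: level $2^n$ (increments $h\approx2^{-n}$) pairs with $\delta\approx2^{-n/2}$, so that $|\log\delta|\approx n$. Concretely, as in Lemma~\ref{lemma:computation_inf}, put $a=(d-1)|\frac1p-\frac12|$ and $\delta=\min\big(\frac12,2^{-n/2}(1+n)^{a/2}\big)$. Then $2^{-n}\delta^{-2}\lesssim(1+n)^{-a}$, and Theorem~\ref{thm:main_tubes} gives $f_d(\delta)^{|\frac1p-\frac12|}\lesssim|\log\delta|^{-a}\lesssim(1+n)^{-a}$. Hence $\sup_n(1+n)^{a}\|W_n\ast f\|_\infty\lesssim\|T\|$, which is precisely the Littlewood--Paley definition of $B^{0,a}_{\infty,\infty}(\R)$. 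This balancing of the two terms is the only real computation in the proof, and it is what your Step~2 gets wrong.

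Some of your other worries do not arise. No difference characterization is needed. Nothing in \cite{delasalle2026kakeyaconjecturehighranklattice} has to be re-checked for general exponents, because the theorem is already stated in terms of $f_d$. A single fixed translation parameter suffices, so the dependence of $C_{d,\alpha}$ over a range of $\alpha$ never matters.
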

For example, the Fourier multiplier with symbol $\xi\mapsto |\log \frac{(1-|\xi|)_+}{2}|^{-\beta}$ is not bounded on $L_p(\R^d)$ if $\beta < (d-1)|\frac 1 p - \frac 1 2|$ (see Example~\ref{ex:powerLogBesov}). For $d=2$, this has long been known to Carbery. It is natural to ask whether the converse is true. This is a logarithmic form of the Bochner-Riesz conjecture.

We would like to point out that struggling to improve the exponent of the $\log$ in Theorem~\ref{thm: main_kakeya} or Theorem~\ref{thm:main_tubes} can be meaningful. First any (unlikely) improvement on the exponent from $(d-1)$ to $c$ in Theorem~\ref{thm:main_tubes} would improve Corollary~\ref{cor:radial_mult_sobolev} and would in particular lead to new necessary conditions (namely $\beta \geq c|\frac 1 p - \frac 1 2|$) for the $L_p(\R^d)$-boundedness of the logarithmic Bochner-Riesz multipliers. Also, as proved by the second-name author \cite{delasalle2026kakeyaconjecturehighranklattice}, it would also imply that the non-commutative $L_p$ space of the von Neumann algebra of $\mathrm{SL}_{2d-1}(\Z)$ fails the operator space approximation property whenever $c |\frac 1 p - \frac 1 2|>1$. However, this would only be new if $c>2d$ because for $ 2d|\frac 1 p - \frac 1 2|>1$ the result has already been proven by other methods by Lafforgue, de Laat and the second-named author \cite{LafforguedlS,MR3781331}.

\subsection{Idea of the construction}

Both Theorem~\ref{thm: main_kakeya} and \ref{thm:main_tubes} rely on the same construction, which is an adaptation of the Perron tree construction (see Theorem~\ref{theo: perron tree}). In dimension $2$, the basic construction is the following: start with a triangle with a specified vertex $\mathbf{v}$. Cut it into two smaller triangles by dividing the edge opposite to the specified vertex, and translate the two triangles towards each other parallel to their bases. This basic step is subsequently repeated to each pair of neighbouring triangles formed at each iteration (see Figure \ref{fig: 2d perron}). 
\begin{figure}[htbp]
\centering
    \makebox[0pt][c]{\hspace{0.6cm}
    \includegraphics[scale=1.3]{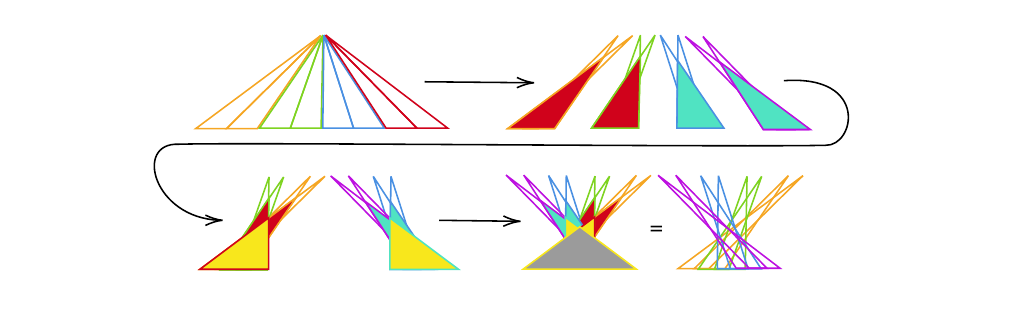}}
  
  \caption{Construction of a Perron tree via dividing a triangle into $ 2^{n}$ sub-triangles and recombining them with partial overlaps. Cases $n=3$ is illustrated. Triangles with the same colour are those to which the basic construction is applied in each step. The branches formed at each iteration are moved as a block along with the triangular cores from which they sprout.}
  \label{fig: 2d perron}
\end{figure}

The basic subdivision happens in the base. And the idea at the heart of the construction is the obvious fact that an interval can be divided into two smaller intervals of equal length, and that these intervals can be translated towards each other, in such a way that the total length of the interval formed by their union decreases. 

To generalize this process to higher dimension, we start with a $d$-dimensional pyramid with a distinguished vertex $\mathbf{v}$, the apex, and we have to study such partitions of the opposite face, which is a $(d-1)$-dimensional polytope, and how to rigidly move the pieces to form a strictly smaller copy of the original polytope. In the paper, we will be able to perform a Perron tree type construction starting from a rather general polytope and a partition of it, but for the introduction let us restrict ourselves to the case of a simplex.

A $(d-1)$-simplex $\Sigma$ is the convex hull of $d$ affinely independent points. Up to affine transformations, there is a unique $(d-1)$-simplex, but there are many ways of partitioning it into smaller simplices: the barycentric subdivision is probably the most common, but another more relevant for us is the Coxeter-Freudenthal-Kuhn subdivision. As we will explain in Section~\ref{sec: perron base}, if we start with a partition $(\Sigma_i)_{i \in I}$ of $\Sigma$ with the property that none of the pieces $\Sigma_i$ meets all the faces of $\Sigma$, it is possible to translate them into a strictly smaller copy of $\Sigma$, and we can analyse what these translations lead to at the level of the partition of the pyramid $T=\conv(\Sigma,\mathbf{v})$ into smaller pyramids $T_i=\conv(\Sigma_i,\mathbf{v})$. Choosing an identification of each of the pieces of the partition with $\Sigma$, we can further subdivide them to obtain a second-generation partition, and keep on going. In this way, we obtain for every $n$, a partition $\big(\Sigma^{(n)}_i\big)_{i \in I^n}$ and an associated partition of $T$ into pyramids $T_i^{(n)} = \conv(\Sigma_i^{(n)},\mathbf{v})$. In Section~\ref{sec: perron iterative} (see Theorem~\ref{theo: perron tree}) we explain how to iterate the shrinking process and prove (see Figure~\ref{fig:Perrons}) that there is a family of vectors $(\mathbf{w}_i)_{i \in I^n}$ such that
  \begin{equation*} \Big|\bigcup_{i \in I^n}\big( T_i^{(n)}+\mathbf{w_i}\big)\Big| \lesssim \frac{C}{n^{d-1}} \big|T\big|.
  \end{equation*}
  \begin{figure}[htbp]
    \centering
    \begin{subfigure}[b]{0.45\textwidth}
        \centering
        \includegraphics[width=0.7\textwidth]{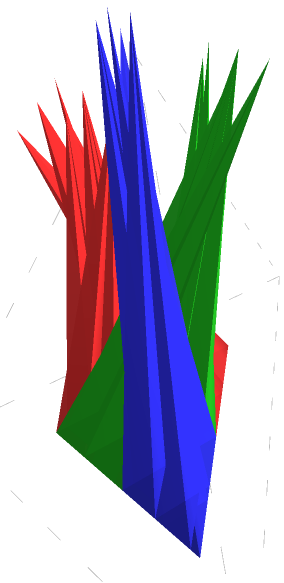}
        \caption{$n=2$}
    \end{subfigure}
    \hfill
    \begin{subfigure}[b]{0.45\textwidth}
        \centering
        \includegraphics[width=\textwidth]{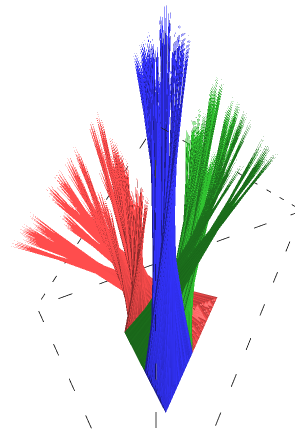}
        \caption{$n=3$}
    \end{subfigure}

    \vspace{0.5cm} 

    \begin{subfigure}[b]{0.45\textwidth}
        \centering
        \includegraphics[width=\textwidth]{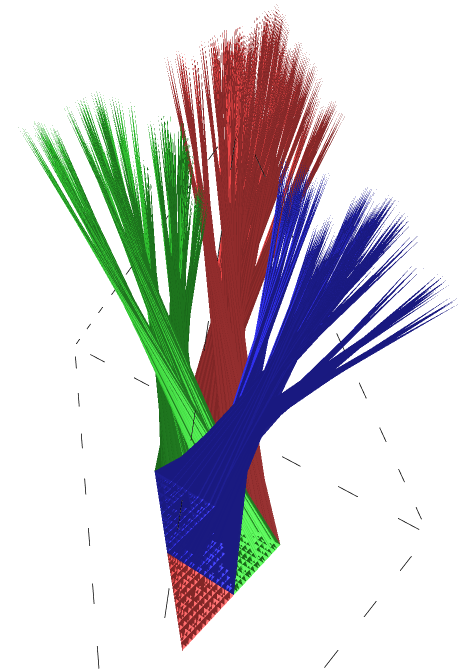}
        \caption{$n=4$}
    \end{subfigure}
    \hfill
    \begin{subfigure}[b]{0.45\textwidth}
        \centering
        \includegraphics[width=\textwidth]{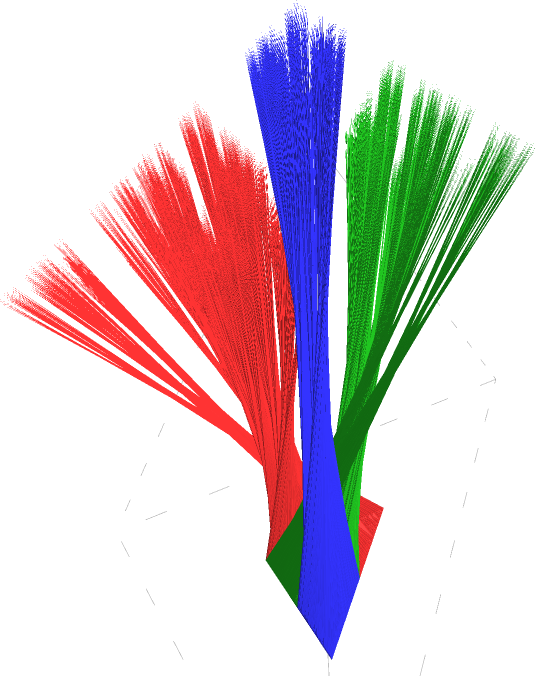}
        \caption{$n=5$}
    \end{subfigure}
    
    \caption{$n$-th iterated $3D$ Perron tree construction starting from a simplex, and different values of $n$.}
    \label{fig:Perrons}
\end{figure}

  All this goes in the correct direction, but it not enough to deduce Theorem~\ref{thm: main_kakeya} and Theorem~\ref{thm:main_tubes}. In order to do so, we will need more properties of the partition $(\Sigma_i)_{i \in I}$ that we start with. An additional sufficient property is that of a \emph{fair partition}, where all the pieces are congruent (by an isometry, or more generally a compact subgroup $G$ of $\mathrm{GL}(V)$) to a scaled down version of the initial piece. In dimension $1$, a $1$-simplex is just an interval, and for every integer $q$ it admits a unique fair subdivision into $q$ pieces. In dimension $2$, a fair subdivision of the simplex into $q^2$ pieces exists, but it is not unique (see Figure \ref{fig: subdivisions}). In higher dimension, it is not obvious, but true and classical, that fair subdivisions of the simplex exist (for example the Coxeter-Freudenthal-Kuhn subdivision); the barycentric subdivision is not fair. 
\begin{figure}[htbp]
\centering
    \makebox[0pt][c]{\hspace{3.5cm}
    \includegraphics[width=\textwidth]{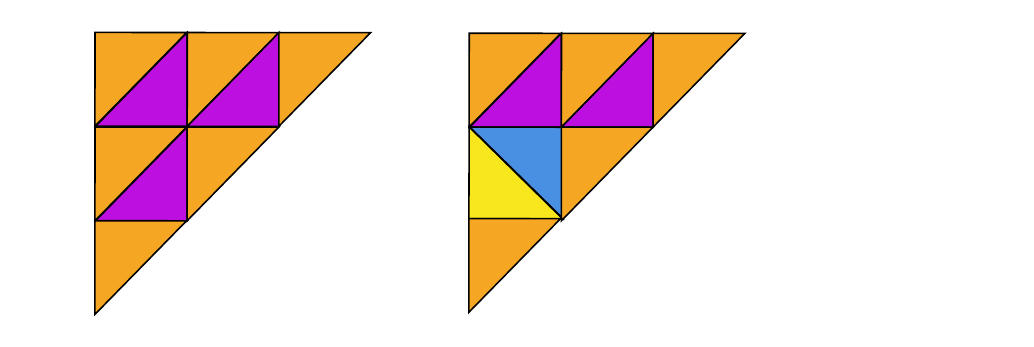}}
  \caption{Two different fair subdivisions of the 2-simplex with $q=3$.}
  \label{fig: subdivisions}
\end{figure}

  If we start with a fair subdivision and iterate it by choosing the identification of each piece of the partition with $\Sigma$  according to the compact group $G$, there are potentially many choices involved, but for every such choice this partitions $\big(\Sigma^{(n)}_i\big)_{i \in I^n}$ remain fair. So the shapes of the partition do not degenerate, in the sense that ratios are conserved. This fairness implies a crucial property for the pyramids $\conv(\Sigma_i^{(n)},\mathbf v)$: they all contain a $\delta$-tube for $\delta \geq c |I^n|^{-1/(d-1)}$, see Lemma~\ref{lemma: tubes inside perron} and Figure \ref{fig: tubes inside} below. Combined with the Perron tree construction we just described, this will lead to Theorem~\ref{thm: main_kakeya}.
\begin{figure}[htbp]
\centering
\makebox[0pt][c]{\hspace{9cm}\begin{minipage}{0.49\textwidth}
    \centering
    \includegraphics[width=\linewidth]{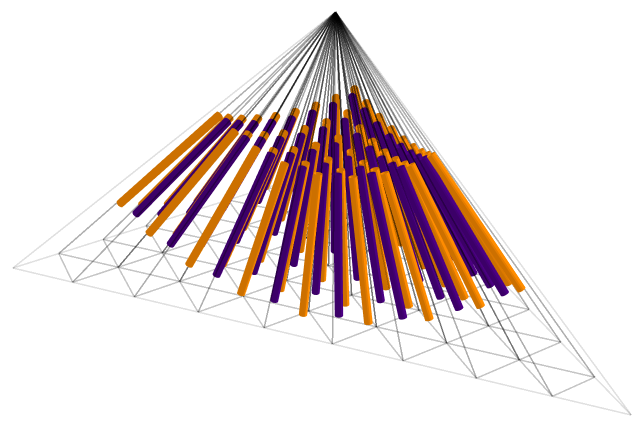}
\end{minipage}}\hfill
\makebox[0pt][c]{\hspace{-9cm}\begin{minipage}{0.49\textwidth}
    \centering
    \includegraphics[scale=0.5]{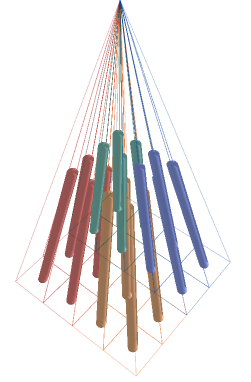}
\end{minipage}}
\caption{Particularization of Lemma \ref{lemma: tubes inside perron} to a fair subdivision of the $2$-simplex into 81 sub cells and for a cube with 16 sub cells, both with apexes at distance 3. One is able to contain a $\delta$-tube completely inside of each pyramid. All the tubes' top faces are distance of 1 to the apex and $\delta\sim 9^{-2}$ and $\delta\sim 4^{-2}$, respectively.}
\label{fig: tubes inside}
\end{figure}

If moreover we start with a subdivision that has an additional separation property (see Section~\ref{sec: perron base} for details), we can show that the tubes contained in the pyramids will have pairwise disjoint translates and deduce Theorem~\ref{thm:main_tubes}.

One may perform a Perron tree construction starting from any fair polytopal partition (as we will show), but the reader should have in mind the example of the cube and its natural dyadic subdivision, which makes the particularisation of the proofs in our construction cleaner and more intuitive. We thank Anthony Carbery who pointed this out to us.
\subsection*{Organization}  
Section \ref{sec: perron base}, studies the effect of translating the cells of the original polytope towards a distinguished point. The conclusion of this section is Proposition \ref{prop: base case}, the base step of the higher-dimensional Perron tree construction. Section \ref{sec: perron iterative} iterates this base step inductively to produce the full Perron tree construction, and derives a volume bound by an optimised choice of translation parameters at each step. Section \ref{sec: applications} applies the construction to prove Theorem \ref{thm:main_tubes} using specific families of overlapping $\delta$-tubes, and deduces Theorem \ref{thm: main_kakeya} on the existence of Kakeya sets with desirable $\delta$-neighbourhoods. Section \ref{sec: harmonic} records some consequences for harmonic analysis, in particular the proof of Corollary \ref{cor:radial_mult_sobolev} on Fourier multipliers and Besov spaces with logarithmic smoothness. At the end, we provide a link to a Jupyter notebook implementing the construction with interactive $3$D plots. In the Appendix \ref{appendix}, we provide the details on how our fully general construction is applicable to some non-trivial examples.
\subsection*{Acknowledgments}
The authors would like to warmly thank Anthony Carbery, Sam Craig, Oscar Dominguez, and Mark Lewko for their useful comments. The second author thanks Eduardo Tablate and Javier Parcet for their discussions on the ``logarithmic Bochner-Riesz conjecture".

\subsection*{Tool and computational resource disclosure} No AI system was used in the conception, proof, or writing of the mathematical content of this paper. Its use was restricted to generating code for the figures included herein and for the interactive visualizations in the accompanying GitHub repository, with selected portions of our construction provided as input.

\section{Definitions and notation}\label{sec: notation}

In the whole paper, $d\in \N_{\geq 2}$ will be used to denote an integer, and we consider the Euclidean space $\R^d$. We denote vectors and points in boldface, such as $\mathbf{v}\in \R^d$. By $\mathbf{e}_i$, we denote the canonical basis vectors $(0,\dots,1,\dots, 0)$. We write $[k]$ for the set $\{1,\dots,k\}$. For $\mathbf{p}\in \R^d$ and $\lambda\geq 0$, $\mathcal{H}_\mathbf{p}^\lambda$ will denote the homothety of center $\mathbf{p}$ and ratio $\lambda$; $\mathcal{R}_\mathbf{p}$ will denote the reflection across the point $\mathbf{p}$; $\operatorname{int}(X)$ will denote the topological interior of $X\subset \R^d$; $\overline{X}$ will denote the topological closure of $X\subset \R^d$; $|X|$ will denote the $d$-dimensional Lebesgue measure of a Lebesgue measurable subset $X\subset \R^d$. $\aff(X)$ will denote the affine span of $X\subset \R^d$:
\begin{equation*}
    \aff(X)=\left\{\sum_{i=1}^n\alpha_i\mathbf{x}_i\mid n\in \N,\;\mathbf{x}_i\in X,\;\alpha_i\in \R,\; \sum_{i=1}^n\alpha_i=1\right\}.
\end{equation*}

The direction of a non-empty subset $X\subset \R^d$ is:
\begin{equation*}
     \overrightarrow{X}= \operatorname{span}\{a-b\mid a,b\in X\}.
\end{equation*}

The opposite cone of a subset $A$ relative to a point $\mathbf{p}$ is the reflection with respect to $\mathbf{p}$ of the cone spanned by $A$ at $\mathbf{p}$:
\begin{equation}\label{eq:opposite_cone} \ocone{A}{\mathbf{p}} = \{ \mathbf{p} - \lambda(\mathbf{x}-\mathbf{p})\mid\mathbf{x} \in A, \lambda \geq 0\}.
\end{equation}

\subsection{Polytopes and partitions}
Consider a real vector space $V$ of dimension $d\in \N$, and let $m\in \N$ with $m\leq d$.
\begin{definition}
An $m$-dimensional {\bf polytope}, or $m$-polytope, is the convex hull of a finite set of points whose affine span has dimension $m$. 

 An $m$-dimensional {\bf pyramid}, or $m$-pyramid, is the convex hull of an $(m-1)$-polytope (the \emph{base}) and an affinely independent point (the \emph{apex}).
 
 An $m$-dimensional {\bf simplex}, or $m$-simplex, is the convex hull of $(m+1)$ affinely independent points. In particular, any $m$-simplex is an $m$-polytope and an $m$-pyramid whose base is a $(m-1)$-simplex. 

A {\bf polytopal subdivision} or {\bf subdivision} of an $m$-polytope $\Sigma$ is a collection $(\Sigma_i)_{i\in I}$ of $m$-polytopes (\emph{cells}) such that:
\begin{itemize}
    \item The union of the cells in $(\Sigma_i)_{i\in I}$ is the polytope $\Sigma$.
    \item For any two cells $\Sigma_1$, $\Sigma_2$, their intersection is either empty or a full face of a certain dimension shared by $\Sigma_1$, $\Sigma_2$.
\end{itemize}
A {\bf polytopal partition} or {\bf partition} of an $m$-polytope $\Sigma$ substitutes the second condition above by:
\begin{itemize}
    \item  For any two different cells $\Sigma_1$, $\Sigma_2$, their intersection has dimension $<m$.
\end{itemize}
In particular, a subdivision of an $m$-polytope is a partition too.
\end{definition}

As we explained in the introductions, there are different ways to break up a polytope into pieces. Some, that we call fair, are \emph{better} than the others.
\begin{definition} A {\bf fair subdivision} ({\bf fair partition}) of an $m$-polytope $\Sigma$ in $V$ is a subdivision (partition) $(\Sigma_i)_{i \in I}$ such that there is a compact subgroup $G \subset \mathrm{GL}(V)$ such that all the pieces $\Sigma_i$ are $G$-congruent to $\frac{1}{|I|^{\frac{1}{m}}}\Sigma$: i.e. there is $\mathbf{v}_i \in V$ and $g_i \in G$ such that
\begin{equation}\label{eq:G-congruence}\Sigma_i = \frac{1}{|I|^{\frac{1}{m}}} g_i(\Sigma) + \mathbf{v}_i.
\end{equation}
When this holds we say that the subdivision (partition) is $G$-fair.
\end{definition}

Since every compact subgroup of $\mathrm{GL}(V)$ preserves a scalar product, we can put a Euclidean structure on $V$ such that the subdivision is $O(V)$-fair. When $V$ is already equipped with a Euclidean structure (for example $V=\R^d$), we will always assume that the fair subdivision (partition) is $O(V)$-fair.

We record here the existence of fair subdivisions of the simplex:
\begin{theorem}\label{thm:CFK}\cite{Coxeter34,Freudenthal42} For every integer $q$, the $d$-simplex admits a fair subdivision into $q^d$ pieces.
\end{theorem}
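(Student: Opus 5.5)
We would prove Theorem~\ref{thm:CFK} with the classical Freudenthal--Kuhn construction, carried out on a model simplex and then transported by an affine map. Work in $\R^d$ with the standard simplex
\[ \Delta = \{\mathbf{x}\in\R^d \mid 1\geq x_1\geq x_2\geq \dots\geq x_d\geq 0\}. \]
A general $d$-simplex is an affine image $A(\Delta)$, and affine images of subdivisions are subdivisions. For fairness we would not use the Euclidean group. Instead we take $G = A_0\, \widetilde G\, A_0^{-1}$, where $A_0$ is the linear part of $A$ and $\widetilde G$ is the compact group that will witness fairness for $\Delta$. Compactness is preserved by conjugation, so it suffices to treat $\Delta$.

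The first step is the Kuhn triangulation of the cube. For every permutation $\sigma\in S_d$, let $\Delta_\sigma=\{1\geq x_{\sigma(1)}\geq\dots\geq x_{\sigma(d)}\geq 0\}$. These $d!$ simplices cover $[0,1]^d$ and meet along common faces: their intersections are where some coordinate inequalities become equalities, which are common faces. Each $\Delta_\sigma = P_\sigma(\Delta)$, where $P_\sigma$ is a permutation matrix.

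Next we scale by $q$ and translate to integer points. The cubes $\mathbf{k}+[0,1]^d$ with $\mathbf{k}\in\Z^d$, each cut into the simplices $\mathbf{k}+\Delta_\sigma$, triangulate $\R^d$. The key point is that these pieces match across cube faces: the hyperplanes $x_i-x_j\in\Z$ and $x_i\in\Z$ cut out exactly these cells, so the triangulation is the hyperplane-arrangement (alcove) triangulation. The simplex $q\Delta$ is cut out by the hyperplanes $x_1= q$, $x_i=x_{i+1}$ and $x_d=0$, which all belong to the arrangement. Hence $q\Delta$ is a union of cells, and these cells form a subdivision. Counting volumes gives exactly $q^d$ cells, since $|q\Delta|=q^d|\Delta|$ and every cell has volume $|\Delta|$. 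Rescaling by $1/q$ gives the pieces $\frac1q(\mathbf{k}+P_\sigma\Delta)$.

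For fairness, every piece has the form $\frac1q g(\Delta)+\mathbf{v}$ with $g=P_\sigma$ in the finite, hence compact, group of permutation matrices. This matches \eqref{eq:G-congruence} with $|I|^{1/d}=q$. Using the arrangement viewpoint, rather than checking face-to-face cube matching by hand, keeps the argument short.

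The main obstacle is the face-to-face condition. One must show that two cells meeting across a cube boundary share a full common face. The alcove description handles this. Every cell is the closure of a connected component of the complement of the locally finite arrangement $\{x_i\in\Z\}\cup\{x_i-x_j\in\Z\}$. Any two such closures intersect in a face of each, namely the set where the relevant defining inequalities hold with equality. We would verify that the cells are exactly the components by noting that inside a unit cube, fixing the integer parts $\lfloor x_i\rfloor$, the sign pattern of the differences $x_i-x_j$ determines the ordering $\sigma$ of the fractional parts. This is precisely the cell $\mathbf{k}+\Delta_\sigma$.
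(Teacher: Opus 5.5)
Your proposal is correct and is essentially the paper's construction: Appendix~\ref{ap: simplex} uses the same Coxeter--Freudenthal--Kuhn cells $\sigma(\mathbf{w},\pi)=\frac1q\pi(R_{d,q})+\mathbf{w}$, which are your cells $\mathbf{k}+P_\sigma\Delta$ of $q\Delta$ up to reversing the coordinate order and replacing $d-1$ by $d$, with fairness witnessed by the same permutation group. The only difference is in the justification: the paper selects the cells through the consistency condition on $(\mathbf{w},\pi)$ and cites Mirzakhani--Vondr\'ak for the subdivision property, whereas you prove it directly via the arrangement $\{x_i\in\Z\}\cup\{x_i-x_j\in\Z\}$ and a volume count, and you also make the affine transport to an arbitrary simplex explicit through the conjugated group $A_0\widetilde G A_0^{-1}$.
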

\noindent We describe and study this subdivision in detail in Section~\ref{ap: simplex}.

We end this section with the following result which allows one to include congruent $\delta$-tubes into fair partitions of a given pyramid. We use the above convention that for a the base polytope $\Sigma\subset \R^d$, fair decomposition means $\mathrm{O}(d)$-fair.
\begin{lemma}\label{lemma: tubes inside perron}
Let $\Sigma \subset \R^d$ be a $(d-1)$-polytope and $\mathbf{v}\notin \operatorname{Aff}(\Sigma)$ such that $d\big(\mathbf{v},\operatorname{Aff}(\Sigma)\big)>2$. For all $\alpha>0$, there exists a $C=C_{\alpha,\Sigma,\mathbf{v}}>0$ such that for all fair partitions $(\Sigma_i)_{i\in [N]}$ of $\Sigma$, there exists $\big(R_i\big)_{i\in [N]}$ $\delta$-tubes with $\delta=CN^{-1/(d-1)}$,  such that:
\begin{enumerate}
    \item $R_i\subset (\operatorname{conv}(\Sigma_i,\mathbf{v}))$ $\forall\;i$,
    \item $\translate{R_i}{\alpha} \subset \ocone{\Sigma_i}{\mathbf{v}}$ $\forall\;i$.
\end{enumerate}
\end{lemma}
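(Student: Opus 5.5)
The plan is to reduce everything to a round $(d-1)$-disk inside each cell $\Sigma_i$. Then I place each tube on the line joining $\mathbf{v}$ to the centre of that disk, close enough to $\mathbf{v}$ that its translate by $1+\alpha$ lands past $\mathbf{v}$, inside the opposite cone.

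\emph{Setup.} Set $r=N^{-1/(d-1)}$ and $h=d(\mathbf{v},\aff(\Sigma))>2$. Let $D=\max_{\mathbf{x}\in\Sigma}|\mathbf{x}-\mathbf{v}|$. Fix a $(d-1)$-disk $B\subset\Sigma$ of radius $\rho>0$ in $\aff(\Sigma)$. By fairness, $\Sigma_i=r g_i(\Sigma)+\mathbf{v}_i$ with $g_i\in \mathrm{O}(d)$. Since $\Sigma_i\subset\aff(\Sigma)$, the map $g_i$ sends $\overrightarrow{\aff(\Sigma)}$ to itself. Hence $\Sigma_i$ contains a $(d-1)$-disk $B_i\subset\aff(\Sigma)$ of radius $\rho r$, with some centre $\mathbf{c}_i\in\Sigma$. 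Let $\ell_i=|\mathbf{c}_i-\mathbf{v}|\in[h,D]$ and $\mathbf{u}_i=(\mathbf{c}_i-\mathbf{v})/\ell_i$.

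\emph{Key geometric estimate.} The cone $K_i=\{\mathbf{v}+\lambda(\mathbf{x}-\mathbf{v}):\mathbf{x}\in B_i,\ \lambda\ge0\}$ is an oblique circular cone. The angle between its axis $\mathbf{u}_i$ and the normal of $\aff(\Sigma)$ has cosine $h/\ell_i$. A routine computation then gives the following: for every $s>0$, the ball of radius $\kappa s r$ around $\mathbf{v}+s\mathbf{u}_i$ in the hyperplane orthogonal to $\mathbf{u}_i$ is contained in $K_i$, where $\kappa=c\rho h/D^2$ depends only on $\Sigma,\mathbf{v}$. Similarly, for $s<0$, the ball of radius $\kappa|s|r$ around $\mathbf{v}+s\mathbf{u}_i$ lies in $\ocone{B_i}{\mathbf{v}}\subset\ocone{\Sigma_i}{\mathbf{v}}$. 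For $0<s\le\ell_i$ these points also lie in $\conv(B_i,\mathbf{v})\subset\conv(\Sigma_i,\mathbf{v})$, provided the ball stays on the $\mathbf{v}$-side of $\aff(\Sigma)$. This holds as soon as $\ell_i-s$ is bounded below and $\delta$ is small.

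\emph{Placement.} Let $a=\min(\alpha/2,1/2)$. Let $R_i$ have axis the segment $\{\mathbf{v}+s\mathbf{u}_i: s\in[a,a+1]\}$ and radius $\delta=C r$, with $C=\kappa\min(a,\alpha/2)$, decreased further if needed so that $\delta<1/4$. Orient the tube so that its parameter $0$ sits at $s=a+1$ and its parameter $1$ at $s=a$; the axis thus points towards $\mathbf{v}$. Then $R_i$ lies at distances $s\in[a,a+1]\subset[a,3/2]$ along the axis. Since $\ell_i>2$, this keeps a margin of at least $1/2$ from $\aff(\Sigma)$, so the estimate gives (1).

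The translate $\translate{R_i}{\alpha}$ occupies $s\in[a-1-\alpha,\,a-\alpha]$. Because $a\le\alpha/2$, this range lies in $(-\infty,-\alpha/2]$. There the opposite cone contains orthogonal balls of radius at least $\kappa(\alpha/2)r\ge\delta$, which gives (2). The constant $C$ depends only on $\alpha,\Sigma,\mathbf{v}$, as required.

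The main obstacle is the cone-containment estimate. One needs the orthogonal cross-section of the oblique cone $K_i$ at axial distance $s$ to contain a ball of radius $\gtrsim sr$, uniformly in $i$. I would prove it by comparing $K_i$ with the right circular cone of half-angle $\arctan(\rho r h/\ell_i^2)$, which it contains. That reduces the claim to elementary trigonometry using $\ell_i\le D$ and $\cos\angle(\mathbf{u}_i,\text{normal})=h/\ell_i$. The rest is bookkeeping. The hypothesis $h>2$ is used exactly to fit a length-one tube, plus a margin, between $\mathbf{v}$ and the base.
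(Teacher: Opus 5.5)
Your proposal is correct and follows essentially the paper's argument: fairness gives a disk of radius $\sim N^{-1/(d-1)}$ in each $\Sigma_i$, the tube sits on the segment from its centre to $\mathbf{v}$ starting at distance about $\alpha/2$ from $\mathbf{v}$, and its $(1+\alpha)$-translate lands past $\mathbf{v}$ in the opposite cone (it is exactly the reflection through $\mathbf{v}$ when $\alpha\le 1$). The differences are presentational. Your explicit right-circular-cone bound replaces the paper's appeal to ``elementary trigonometry'' plus compactness, and the uniform choice $a=\min(\alpha/2,1/2)$ replaces its reduction of $\alpha>1$ to $\alpha=1$; your orientation and base-margin checks spell out details the paper leaves implicit.
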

\begin{proof}
Let us first assume that $\alpha\in (0,1]$, and let $E=\aff(\Sigma)$.

By elementary trigonometry (see Figure~\ref{fig: prop diagram}), for all $\mathbf{x}\in \Sigma$ there exists a $C_1=C_1^\mathbf{x}>0$ such that if $\mathbf{x}'$ is the point on the segment $[\mathbf{x},\mathbf{v}]$ at distance $\alpha/2$ from $\mathbf{v}$, then $d(\mathbf{x}', \R^d\setminus \conv\big[B_E(\mathbf{x},\epsilon),\mathbf{v}\big])\geq C_1\epsilon$.  Moreover, by compactness of $\Sigma$, $C_1$ can be taken independent of $\mathbf{x}$. 

Now let $R>0$ such that $\Sigma$ contains a ball of radius $R$ (in $E$). Then for all $i\in [N]$, there exists a $\mathbf{x}_i\in \Sigma$ such that $\Sigma_i$ contains $B(\mathbf{x}_i,N^{-1/(d-1)}R)$. By the previous comment, $\conv(\Sigma_i,\mathbf{v})$ contains $R_i$, the $C_1N^{-1/(d-1)}R$-tube centered along $[\mathbf{x}_i,\mathbf{v}]$ and at distance $\alpha/2$ from $\mathbf{v}$.

Then we have $\translate{R_i}{\alpha}=\mathcal{R}_\mathbf{v}(R_i)$, which is contained in $\ocone{\Sigma_i}{\mathbf{v}}$. This proves the proposition for $\alpha\in (0,1]$.

 Finally, for $\alpha>1$, we apply the above argument for $\alpha=1$ to obtain $R_i\subset \conv(\Sigma_i,\mathbf{v})$ such that $\translate{R_i}{1}=\mathcal{R}_{\mathbf{v}}(R_i)\subset \ocone{\Sigma_i}{\mathbf{v}}$. If we translate the $\delta$-tube $\translate{R_i}{1}$ by the vector $(\alpha-1) \mathbf{u}(R_i)$, we get $\translate{R_i}{\alpha}$. But note that this translation of $\translate{R_i}{1}$ must remain contained  in $\ocone{\Sigma_i}{\mathbf{v}}$, as we wanted.
 \end{proof}
\begin{figure}[htbp]
    \centering
    \includegraphics[scale=.6]{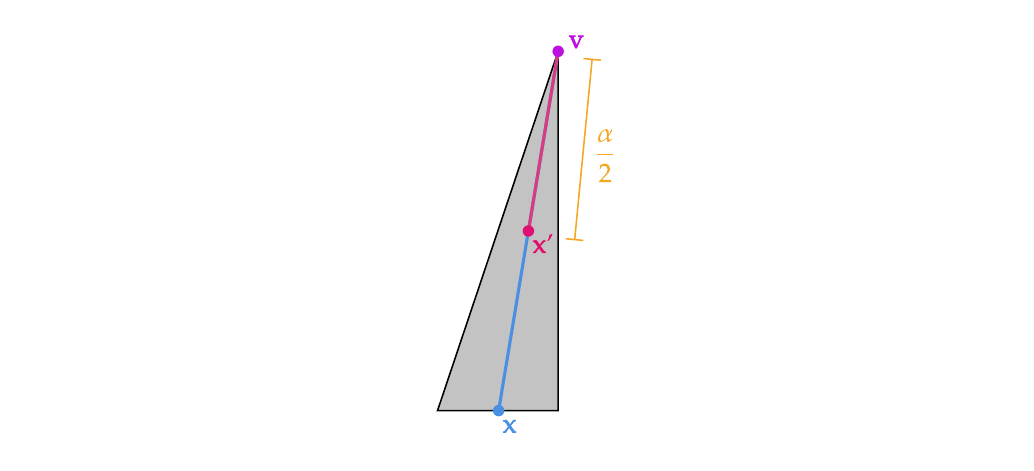}
    \caption{Illustrating the proof of Lemma \ref{lemma: tubes inside perron} when $d=2$.}
    \label{fig: prop diagram}
\end{figure}

The following lemma is proven in an analogous way to the previous one and will be a crucial ingredient in the proof of Theorem~\ref{thm: main_kakeya}:
\begin{lemma}\label{lemma: balls inside center of pyramid}
Let $\Sigma$ be a $(d-1)$-polytope and $\mathbf{v}\notin \operatorname{Aff}(\Sigma)$. There exists a $C=C_{\Sigma,\mathbf{v}}>0$ such that for every fair partition $(\Sigma_i)_{i\in [N]}$ of $\Sigma$, $C\conv(\Sigma_i,v)$ contains a translate of the $2\cdot N^{-1/(d-1)}$-neighbourhood of $\conv(\Sigma_i,v)$.
\end{lemma}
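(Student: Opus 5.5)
The plan is to reduce the statement to a general fact about convex bodies containing a ball, and then use the fairness of the partition to produce such a ball. I read ``$C\conv(\Sigma_i,\mathbf v)$'' as the image of $\conv(\Sigma_i,\mathbf v)$ under a homothety of ratio $C$; any two such images differ by a translation, so we may choose the centre freely.

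\textbf{Step 1 (convex geometry).} Let $K$ be a convex body containing the ball $B(\mathbf p,r)$, and let $s>0$. Set $\lambda=1+s/r$. We claim
\[
K+B(0,s)\subset \mathcal H^{\lambda}_{\mathbf p}(K).
\]
Take $\mathbf x\in K$ and $|\mathbf y|\le s$, and write
\[
\mathbf x+\mathbf y=\mathbf p+\lambda\Big[\tfrac1\lambda(\mathbf x-\mathbf p)+\big(1-\tfrac1\lambda\big)\mathbf z\Big],\qquad \mathbf z=\frac{\mathbf y}{\lambda-1}.
\]
Then $|\mathbf z|\le s/(\lambda-1)=r$, so $\mathbf p+\mathbf z\in K$. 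The bracket is therefore a convex combination of two points of $K-\mathbf p$, and lies in $K-\mathbf p$, which proves the claim. Consequently it suffices to show that each pyramid $K_i=\conv(\Sigma_i,\mathbf v)$ contains a ball of radius $r_i\ge cN^{-1/(d-1)}$, with $c=c_{\Sigma,\mathbf v}$ independent of $N$ and $i$. Taking $s=2N^{-1/(d-1)}$ then gives $\lambda\le 1+2/c=:C$. Since $K_i$ contains a ball around $\mathbf p_i$, enlarging the ratio from $\lambda$ to $C$ keeps $\mathcal H^{C}_{\mathbf p_i}(K_i)\supset\mathcal H^{\lambda}_{\mathbf p_i}(K_i)$.

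\textbf{Step 2 (ball in the base).} As in the proof of Lemma~\ref{lemma: tubes inside perron}, fix $R>0$ such that $\Sigma$ contains a ball of radius $R$ in $E=\aff(\Sigma)$. Fairness gives $\Sigma_i=N^{-1/(d-1)}g_i(\Sigma)+\mathbf v_i$ with $g_i$ an isometry. Hence $\Sigma_i$ contains a $(d-1)$-ball $B_E(\mathbf x_i,\rho)$ with $\rho=RN^{-1/(d-1)}$ and $\mathbf x_i\in\Sigma$.

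\textbf{Step 3 (ball in the pyramid, uniformly).} Let $h=d(\mathbf v,E)$ and $D=\sup_{\mathbf x\in\Sigma}|\mathbf x-\mathbf v|$. Take $\mathbf p_i$ to be the midpoint of $[\mathbf x_i,\mathbf v]$. The cone $\conv(B_E(\mathbf x_i,\rho),\mathbf v)$ contains the $(d-1)$-ball of radius $\rho/2$ centred at $\mathbf p_i$ and parallel to $E$. Its boundary distance from $\mathbf p_i$ is controlled by the same elementary trigonometry used in Lemma~\ref{lemma: tubes inside perron}: $\mathbf p_i$ is at distance at least $c_1\rho$ from the complement, with $c_1$ depending only on $h$, $D$ and $R$ (for instance something like $c_1\sim h/(2\sqrt{h^2+(D+R)^2})$, using $\rho\le R$). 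This bound is uniform in $\mathbf x_i\in\Sigma$ by compactness of $\Sigma$, or directly from the explicit formula. Hence $K_i\supset B(\mathbf p_i,c_1RN^{-1/(d-1)})$, and Step~1 applies with $c=c_1R$.

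The main obstacle is Step~3: one must make sure the inradius estimate is uniform. It has to be independent of $i$, and it has to scale linearly in $\rho$ even though the height of the pyramid stays of order $1$ while $\rho\to0$. The first thing to try is the explicit distance computation from $\mathbf p_i$ to the lateral faces of the cone over the small ball. This distance is at least a fixed multiple of $\rho$ because the angle of the cone at $\mathbf v$ is of order $\rho/D$, and $\mathbf p_i$ lies at distance of order $D$ from $\mathbf v$.
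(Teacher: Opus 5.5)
Your proof is correct and follows the route the paper indicates. The paper only says the lemma is proved analogously to Lemma~\ref{lemma: tubes inside perron}: fairness gives a ball of radius $RN^{-1/(d-1)}$ in $\Sigma_i$, and a uniform trigonometric estimate gives a ball of comparable radius in $\conv(\Sigma_i,\mathbf v)$. Your Step~1, the inclusion $K+B(0,s)\subset\mathcal H^{1+s/r}_{\mathbf p}(K)$, makes explicit the convexity step from ``contains a ball'' to ``a dilate contains a translate of the neighbourhood'', which the paper leaves unstated.
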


\section{The base case of the Perron tree construction}\label{sec: perron base}

In this whole section, we fix a $(d-1)$-dimensional polytope $\Sigma\subset \R^d$, a partition $(\Sigma_i)_{i \in I}$, a point $\mathbf{o}\in\aff(\Sigma)$ and a point $\mathbf{v} \in \R^d \setminus \aff(\Sigma)$. We consider the pyramids with distinguished apex $\mathbf v$ and with bases $\Sigma$ and $\Sigma_i$:
\[ T = \conv(\Sigma,\mathbf{v}),\qquad T_i = \conv(\Sigma_i,\mathbf{v}).\]

We assume moreover that there exists some $c\in (0,1)$ such that:
\begin{equation}\label{eq:assumption_shrinking} 
\boxed{\forall i\in I, \;\; \exists \;\mathbf{x}_i \in \Sigma,\;\;\Sigma_i \subset \mathcal{H}_{\mathbf{x}_i}^c(\Sigma)}
\end{equation}
We say that the family $(\Sigma_i,\mathbf{x}_i)_{i\in I}$ is affinely separated if for every $i \neq j$, there is a non-constant affine map $\varphi:\aff(\Sigma) \to \R$ such that $\varphi \geq 0$ on $\Sigma_i$, $\varphi \leq 0$ on $\Sigma_j$, and $\varphi(x_i)\geq \varphi(x_j)$. 

\begin{remark}
If $x_i \in \Sigma_i$, the family $(\Sigma_i,\mathbf{x}_i)_i$ is affinely separated (Hahn-Banach). But this is not always the case, as we shall explore in concrete examples.
\end{remark}

\begin{example}\label{ex:cubes} If $\Sigma=[0,1]^{d-1}$ is the cube and $(\Sigma_i)_{i \in \{0,1\}^{d-1}}$ is its standard fair subdivision $\Sigma_i = \prod_{k=1}^{d-1} [i_k/2,(1+i_k)/2]$, then \eqref{eq:assumption_shrinking} holds with $c=\frac 1 2$ and $\mathbf{x}_i=i$. It is also affinely separated, by the map $\varphi(\mathbf{y})=\mathbf{y}_k - \frac 1 2$ if $k$ is such that $i_k \neq j_k$.
\end{example}
We will also observe (see the Appendix \ref{ap: simplex}) that other partitions have the same properties:
\begin{example}\label{ex:simplex}
 If we take the Coxeter-Freudenthal-Kuhn fair subdivision of the $(d-1)$-simplex $\Sigma=\{\mathbf{y}\in \R^{d-1}\mid 0\leq y_1\leq \dots\leq y_{d-1}\leq q\}$ into $q^{d-1}$ pieces with $q \geq d$ (see Figure \ref{fig: MV subdiv}), then \eqref{eq:assumption_shrinking} holds with $c=\frac{q-1}{q}$. For a carefully chosen $\mathbf{x}_i$, it is also affinely separated (see Proposition \ref{lemma: cells embedding}).  \end{example}
\begin{example}  If $(\Sigma_i)_{i \in I}$ is any fair partition of a  polytope with $|I|>1$, then there is $k\in \N$ such that the repeated partition $(\Sigma_i)_{i \in I^k}$ satisfies \eqref{eq:assumption_shrinking} (see Proposition \ref{prop: fine polytope}).
\end{example}

The goal of this section is to understand what happens to the bases $\Sigma_i$ and the pyramids $T_i$ when we translate them towards the distinguished point $\mathbf{o}$. 

The first lemma studies this move at the level of the bases, and proves that we obtain a scaled down version of the original polytope. To make the construction precise, denote $\mathbf{u_i} = \mathbf{o}-\mathbf{x}_i$ the vector pointing from the \emph{special} point $\mathbf{x}_i$ associated to $\Sigma_i$ (which exists and which we will fix from now on together with $c$) towards $\mathbf{o}$. 
\begin{lemma}\label{lemma: base containment}
  For every $t \in (-\infty,1-c]$ and $i \in I$,
    \begin{equation*}\label{eq: base containment}
    \Sigma_i + t \mathbf{u}_i \subset \mathcal{H}_{\mathbf{o}}^{1-t}(\Sigma).
\end{equation*}
\end{lemma}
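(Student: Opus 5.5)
Since $\mathcal{H}_{\mathbf{o}}^{1-t}(\Sigma)$ is convex, the plan is to write an arbitrary point of $\Sigma_i + t\mathbf{u}_i$ as a convex combination of points of $\Sigma$ and check that it lands in the scaled copy. Throughout, $t\le 1-c<1$, so the ratio $1-t$ is positive and the homothety is honest. Fix $i\in I$ and $\mathbf{y}\in\Sigma_i$. By assumption \eqref{eq:assumption_shrinking} there is $\mathbf{z}\in\Sigma$ with $\mathbf{y}=\mathbf{x}_i+c(\mathbf{z}-\mathbf{x}_i)$. Using $\mathbf{u}_i=\mathbf{o}-\mathbf{x}_i$, I rewrite
\[
\mathbf{y}+t\mathbf{u}_i-\mathbf{o} = (1-c-t)(\mathbf{x}_i-\mathbf{o}) + c(\mathbf{z}-\mathbf{o}).
\]
Showing $\mathbf{y}+t\mathbf{u}_i\in \mathcal{H}_{\mathbf{o}}^{1-t}(\Sigma)$ amounts to showing that
\[
\mathbf{o}+\frac{1}{1-t}\big(\mathbf{y}+t\mathbf{u}_i-\mathbf{o}\big)\in\Sigma .
\]
Dividing the identity above by $1-t$, this point equals
\[
\mathbf{o} + \frac{1-c-t}{1-t}(\mathbf{x}_i-\mathbf{o}) + \frac{c}{1-t}(\mathbf{z}-\mathbf{o}) = \frac{1-c-t}{1-t}\,\mathbf{x}_i+\frac{c}{1-t}\,\mathbf{z}.
\]
The last equality holds because the two coefficients sum to $1$, so the $\mathbf{o}$ terms cancel.

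It remains to check that this is a genuine convex combination. The coefficient $\frac{c}{1-t}$ is positive. The coefficient $\frac{1-c-t}{1-t}$ is nonnegative precisely because $t\le 1-c$. Both $\mathbf{x}_i$ and $\mathbf{z}$ lie in $\Sigma$, so convexity of $\Sigma$ concludes.

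The step needing care is the choice of where the hypothesis $t\le 1-c$ enters. The algebra is forced once one writes $\Sigma_i\subset \mathbf{x}_i + c(\Sigma-\mathbf{x}_i)$. The bound $t\le 1-c$ is used only to make the $\mathbf{x}_i$ coefficient nonnegative. The bound $t<1$, which follows from it, is used to make the homothety ratio positive. The point $\mathbf{o}\in\aff(\Sigma)$ needs no further assumption, since it cancels out of the convex combination.
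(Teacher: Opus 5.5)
Your proof is correct and is essentially the paper's argument. The paper does the same computation at the level of sets: it writes $\Sigma_i + t\mathbf{u}_i \subset t\mathbf{o} + (1-t)\big(\lambda\Sigma + (1-\lambda)\mathbf{x}_i\big)$ with $\lambda = \frac{c}{1-t}\in[0,1]$ and concludes by convexity of $\Sigma$, which matches your coefficients $\frac{c}{1-t}$ and $\frac{1-c-t}{1-t}$.
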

\begin{proof}
  By the expression
\[
\mathcal{H}^{\lambda}_{\mathbf{p}}(\mathbf{x}) = \lambda(\mathbf{x} - \mathbf{p}) + \mathbf{p} = \lambda \mathbf{x} + (1 - \lambda)\mathbf{p}
\]
for the homothety $\mathcal{H}^{\lambda}_{\mathbf{p}}$, the definition $\mathbf{u_i} = \mathbf{o}-\mathbf{x}_i$ and the assumption \eqref{eq:assumption_shrinking}, we have
\begin{align*}  \Sigma_i + t\mathbf{u}_i &\subset c \Sigma + (1-c) x_i +t(\mathbf{o}-\mathbf{x_i})\\
  & = t\mathbf{o} +  (1-t) \Big( \lambda \Sigma + (1-\lambda) \mathbf{x}_i\Big),
\end{align*}
for $\lambda = \frac{c}{1-t}$. By the assumption on $t \in (-\infty,1-c]$, we have that $\lambda \in [0,1]$. Moreover, $\mathbf{x}_i$ belongs to $\Sigma$, so by the convexity of $\Sigma$, $\lambda \Sigma + (1-\lambda) \mathbf{x}_i \subset \Sigma$ and the lemma is proved.
\end{proof}
\begin{figure}[htbp]
  \centering
  \begin{minipage}{0.3\textwidth}
    \includegraphics[width=\textwidth]{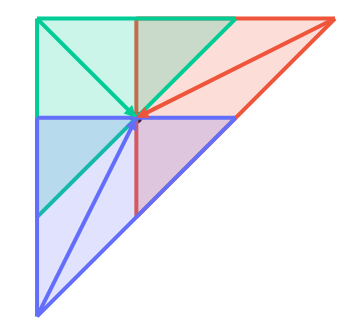}
  \end{minipage}
  \hfill
  \begin{minipage}{0.3\textwidth}
    \includegraphics[width=\textwidth]{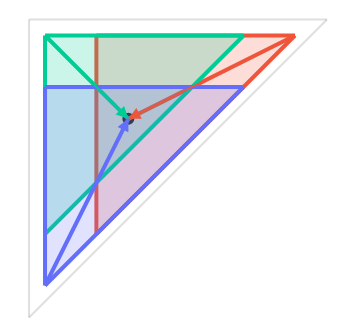}
  \end{minipage}
  \hfill
  \begin{minipage}{0.3\textwidth}
    \includegraphics[width=\textwidth]{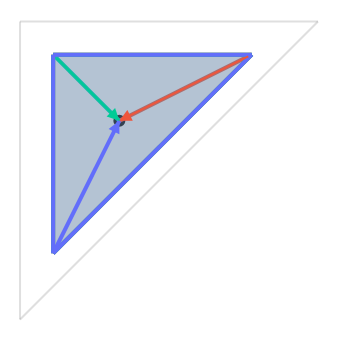}
  \end{minipage}
  \caption{Lemma \ref{lemma: base containment} for a $2$-simplex and $\mathbf{o}$ its barycenter.}
\end{figure}
\begin{figure}[htbp]
  \centering
  \begin{minipage}{0.3\textwidth}
    \includegraphics[width=\textwidth]{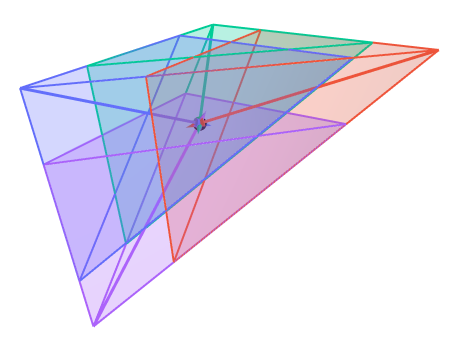}
  \end{minipage}
  \hfill
  \begin{minipage}{0.3\textwidth}
    \includegraphics[width=\textwidth]{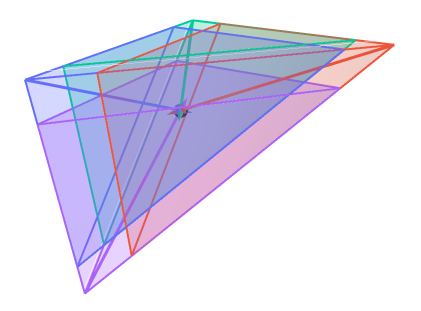}
  \end{minipage}
  \hfill
  \begin{minipage}{0.3\textwidth}
    \includegraphics[width=\textwidth]{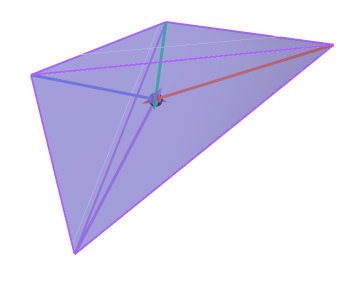}
  \end{minipage}
  \caption{Lemma \ref{lemma: base containment} for a $3$-simplex and $\mathbf{o}$ its barycenter.}
\end{figure}

The previous lemma motivates the introduction of a scaled down version of $T$ that we call the \emph{core}:
\[ C(t) = \mathcal{H}_\mathbf{o}^{1-t}\big(T\big).\]
The next lemma can be understood as the answer to the question: What is the minimum height at which the pyramids are no longer contained in the core when they are translated according to Lemma \ref{lemma: base containment}?
\begin{lemma}\label{lemma: lemma containment peaks}
We have that for $t\in [0,1-c]$:
\begin{equation*}
    (T_i+t \mathbf{u}_i)\setminus C(t)\subset \mathcal{H}_{\mathbf{v}}^{\frac{t}{1-c}}(T_i)+t\mathbf{u}_i.
\end{equation*}
\end{lemma}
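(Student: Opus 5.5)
The plan is to parametrize $T_i$ by ``height'' along segments from the apex and to reduce the statement to Lemma~\ref{lemma: base containment} applied at a rescaled time. Every point of $T_i=\conv(\Sigma_i,\mathbf v)$ can be written as $\mathbf y=(1-s)\mathbf v+s\mathbf x$ with $\mathbf x\in\Sigma_i$ and $s\in[0,1]$. Correspondingly, $\mathcal H_{\mathbf v}^{\lambda}(T_i)$ is exactly the set of such points with $s\in[0,\lambda]$. So it suffices to prove the contrapositive: if $s> \frac{t}{1-c}$ (or more conveniently $s\geq \frac{t}{1-c}$ with $s>0$), then $\mathbf y+t\mathbf u_i\in C(t)$. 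The case $t=0$ is trivial, since $C(0)=T\supset T_i$. So assume $t\in(0,1-c]$, which forces $s>0$ in the relevant range.

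The key step is the following. Put $\tau=t/s$. Since $s\geq t/(1-c)$, we have $\tau\in(0,1-c]$, so Lemma~\ref{lemma: base containment} applies with parameter $\tau$. It gives $\mathbf x+\tau\mathbf u_i\in\mathcal H_{\mathbf o}^{1-\tau}(\Sigma)$, that is, $\mathbf x+\tau\mathbf u_i=(1-\tau)\boldsymbol\sigma+\tau\mathbf o$ for some $\boldsymbol\sigma\in\Sigma$. Multiplying by $s$ yields $s\mathbf x+t\mathbf u_i=(s-t)\boldsymbol\sigma+t\mathbf o$, and hence
\[
\mathbf y+t\mathbf u_i=(1-s)\mathbf v+(s-t)\boldsymbol\sigma+t\mathbf o .
\]

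It then remains to recognise this point as an element of $C(t)=\{(1-t)\mathbf z+t\mathbf o\mid \mathbf z\in T\}$. Set $\mathbf z=\frac{(1-s)\mathbf v+(s-t)\boldsymbol\sigma}{1-t}$. Its coefficients $\frac{1-s}{1-t}$ and $\frac{s-t}{1-t}$ are nonnegative because $t\leq s\leq 1$ (note $s\geq t/(1-c)\geq t$) and $t<1$, and they sum to $1$. So $\mathbf z\in\conv(\boldsymbol\sigma,\mathbf v)\subset T$, and $\mathbf y+t\mathbf u_i=(1-t)\mathbf z+t\mathbf o\in C(t)$.

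Consequently any point of $(T_i+t\mathbf u_i)\setminus C(t)$ must come from some $\mathbf y$ with $s<t/(1-c)$, i.e.\ from $\mathbf y\in\mathcal H_{\mathbf v}^{t/(1-c)}(T_i)$, which is the claim. The only delicate point I expect is the bookkeeping: choosing the rescaled time $\tau=t/s$ so that Lemma~\ref{lemma: base containment} is applicable exactly when $s\geq t/(1-c)$, and checking nonnegativity of the convex coefficients. Degenerate cases ($t=0$, or $s=0$ which only occurs when $t=0$ in the relevant range) are handled separately as above.
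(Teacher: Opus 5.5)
Your proof is correct and is essentially the paper's argument. The paper also slices $T_i+t\mathbf{u}_i$ by the parameter $\lambda=1-s$. On the slices with $\lambda\le 1-\frac{t}{1-c}$, it applies Lemma~\ref{lemma: base containment} at the rescaled time $t/(1-\lambda)=t/s$ to land in $C(t)$. The remaining slices lie in $\mathcal{H}_{\mathbf{v}}^{t/(1-c)}(T_i)+t\mathbf{u}_i$. Your pointwise check that $(1-s)\mathbf v+(s-t)\boldsymbol\sigma+t\mathbf o\in C(t)$ simply makes explicit the slice-wise description of $C(t)$ that the paper writes down.
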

\begin{proof}
We have \[ T_i + t \mathbf{u}_i = \bigcup_{\lambda \in [0,1]} \big(\lambda \mathbf{v} + (1-\lambda) (\Sigma_i + \frac{t}{1-\lambda} \mathbf{u}_i)\big)\]
and
\[ C(t) = \bigcup_{\lambda \in [0,1-t]} \big(\lambda \mathbf{v} + (1-t-\lambda)\Sigma + t\mathbf{o}\big) = \bigcup_{\lambda \in [0,1-t]} \big(\lambda \mathbf{v} + (1-\lambda) \mathcal{H}_{\mathbf{o}}^{\frac{1-t-\lambda}{1-\lambda}}(\Sigma)\big).\]
For $0\leq \lambda \leq 1-\frac{t}{1-c}$, we have $\frac{t}{1-\lambda} \leq 1-c$, so by Lemma~\ref{lemma: base containment} $\Sigma_i + \frac{t}{1-\lambda} \mathbf{u}_i \subset \mathcal{H}_{\mathbf{o}}^{1-\frac{t}{1-\lambda}}(\Sigma)$, and we deduce
\[  \lambda \mathbf{v} + (1-\lambda) (\Sigma_i + \frac{t}{1-\lambda} \mathbf{u}_i) \subset C(t).\]
For $1-\frac{t}{1-c}\leq \lambda \leq 1$,
\[  \lambda \mathbf{v} + (1-\lambda) (\Sigma_i + \frac{t}{1-\lambda} \mathbf{u}_i)
= \lambda \mathbf{v} + (1-\lambda) \Sigma_i + t\mathbf{u}_i \subset \mathcal{H}_{\mathbf{v}}^{\frac{t}{1-c}}(T_i) + t\mathbf{u}_i.\]
These two inclusions prove the lemma.
\end{proof}
\begin{figure}[htbp]
  \centering
  \begin{minipage}{0.3\textwidth}
    \includegraphics[width=\textwidth]{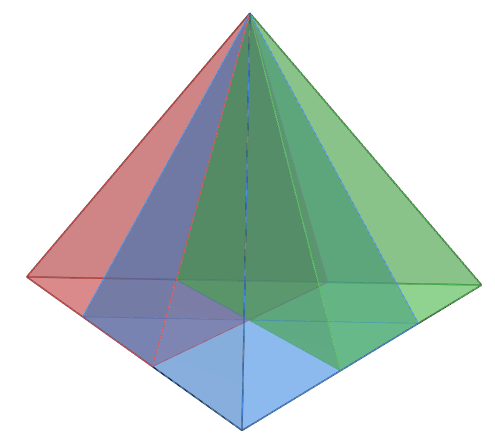}
  \end{minipage}
  \hfill
  \begin{minipage}{0.3\textwidth}
    \includegraphics[width=\textwidth]{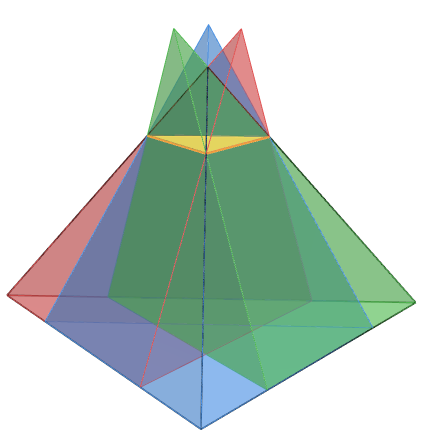}
  \end{minipage}
  \hfill
  \begin{minipage}{0.3\textwidth}
    \includegraphics[width=\textwidth]{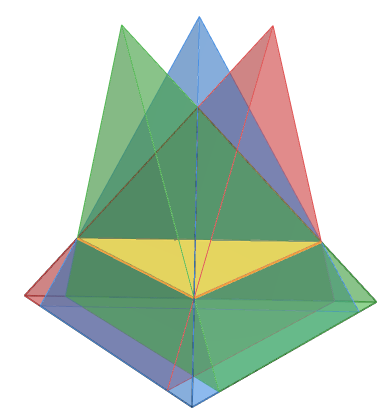}
  \end{minipage}
  \caption{Lemma \ref{lemma: lemma containment peaks} for a $2$-simplex and $\mathbf{o}$ its basis' barycenter. The coloured pyramids are the pyramids $T_i +t\mathbf{u_i}$ for increasing values of $t$, and the yellow section represents the height at which the pyramids are no longer completely contained in the core.}
\end{figure}

The next two lemmas describe the evolution of the opposite cones $\ocone{\Sigma_i}{\mathbf{v}}$ under the same translations as the pyramids.

\begin{lemma}\label{lemma: cone of directions} For every $i \in I$ and $t \in [0,1-c]$,
\begin{equation*}\label{eq: cone of directions}
    \ocone{\Sigma_i}{\mathbf{v}}+t\mathbf{u}_i\subset \ocone{C(t)}{(1-t)\mathbf v+t\mathbf{o}}=\mathcal{H}^{1-t}_\mathbf{o}(\ocone{\Sigma}{\mathbf{v}})
\end{equation*}
\end{lemma}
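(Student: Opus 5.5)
Two things must be shown: the equality $\ocone{C(t)}{(1-t)\mathbf v+t\mathbf{o}}=\mathcal{H}^{1-t}_\mathbf{o}(\ocone{\Sigma}{\mathbf{v}})$, and the inclusion of the translated cone. We handle the equality first and then reduce the inclusion to Lemma~\ref{lemma: base containment}.

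\emph{The equality.} Write $\mathbf{v}_t=(1-t)\mathbf v+t\mathbf o=\mathcal{H}^{1-t}_{\mathbf o}(\mathbf v)$; this is the apex of the core $C(t)=\mathcal{H}^{1-t}_\mathbf{o}(T)$. Homotheties with positive ratio are affine and commute with opposite-cone formation, so $\mathcal{H}(\ocone{A}{\mathbf p})=\ocone{\mathcal{H}(A)}{\mathcal{H}(\mathbf p)}$. Since $t\le 1-c<1$, the ratio $1-t$ is positive, hence
\[\mathcal{H}^{1-t}_\mathbf{o}(\ocone{T}{\mathbf v})=\ocone{C(t)}{\mathbf v_t}.\]
It remains to note that $\ocone{T}{\mathbf v}=\ocone{\Sigma}{\mathbf v}$. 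Every point of $T$ has the form $\mathbf v+\mu(\mathbf x-\mathbf v)$ with $\mathbf x\in\Sigma$ and $\mu\in[0,1]$, so the rays from $\mathbf v$ through $T$ are exactly the rays through $\Sigma$, together with the degenerate point $\mathbf v$ itself.

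\emph{The inclusion.} Again $\ocone{\Sigma_i}{\mathbf v}=\ocone{T_i}{\mathbf v}$, and
\[\ocone{\Sigma_i}{\mathbf v}+t\mathbf u_i=\ocone{\Sigma_i+t\mathbf u_i}{\mathbf v+t\mathbf u_i}.\]
This apex is not $\mathbf v_t$, so we argue pointwise. A point of the translated cone is
\[\mathbf p=\mathbf v+t\mathbf u_i-\lambda(\mathbf y-\mathbf v),\qquad \mathbf y\in\Sigma_i,\ \lambda\ge0.\]
We want to write it as $\mathbf v_t-\lambda'(\mathbf z-\mathbf v_t)$ with $\mathbf z\in C(t)$ and $\lambda'\ge 0$.

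Choose $\mathbf z$ on the line through $\mathbf p$ and $\mathbf v_t$, at the height of $\aff(\Sigma)$. Since $\mathbf u_i\in\overrightarrow{\aff(\Sigma)}$, the translation does not change heights, so $\mathbf p$ sits at the same height as $\mathbf v-\lambda(\mathbf y-\mathbf v)$. Using $\mathbf v_t-\mathbf v=t(\mathbf o-\mathbf v)$, one gets $\mathbf p=\mathbf v_t-\lambda(\mathbf z-\mathbf v_t)$ where
\[\mathbf z=\mathbf y+t\mathbf u_i+\tfrac{t}{\lambda}(\mathbf o-\mathbf v)-\ldots\]
This needs care. It is cleaner to project from the apex onto the base hyperplane. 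Let $h$ be the affine height function with $h=0$ on $\aff(\Sigma)$ and $h(\mathbf v)=1$. Then $h(\mathbf p)=1+\lambda$ and $h(\mathbf v_t)=1-t$. The ray from $\mathbf v_t$ through $\mathbf p$, reflected, meets $h=0$ at a point $\mathbf z$, which can be written explicitly as a convex-type combination of $\mathbf y+s\mathbf u_i$ and $\mathbf o$ for some parameter $s\le t$. Lemma~\ref{lemma: base containment} puts $\mathbf y+s\mathbf u_i$ in $\mathcal{H}^{1-s}_{\mathbf o}(\Sigma)$. The remaining goal is to check that $\mathbf z\in\mathcal{H}^{1-t}_\mathbf o(\Sigma)$, the base of $C(t)$.

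\emph{Main obstacle.} This last check is the bookkeeping step I expect to be hardest. I would first try the scaling argument: the map sending $h=1+\lambda$ to $h=0$ along rays from $\mathbf v_t$ is the homothety centred at $\mathbf v_t$ with ratio $-(1-t)/(\lambda+t)\cdot$(sign). If the coefficients do not close up directly, the fallback is a limiting argument. Apply Lemma~\ref{lemma: base containment} with $t$ replaced by $t/(1+\lambda)$; this is legitimate because $t/(1+\lambda)\le t\le 1-c$. Then use convexity of $\Sigma$ and $\mathbf o\in\aff(\Sigma)$, exactly as in the proof of Lemma~\ref{lemma: lemma containment peaks}.
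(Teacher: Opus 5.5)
You handle the equality $\ocone{C(t)}{\mathbf v_t}=\mathcal{H}^{1-t}_{\mathbf o}(\ocone{\Sigma}{\mathbf v})$ correctly. You also reduce the inclusion correctly: it suffices that the point $\mathbf z\in\aff(\Sigma)$ with $\mathbf p=\mathbf v_t-\lambda'(\mathbf z-\mathbf v_t)$ lies in $\mathcal{H}^{1-t}_{\mathbf o}(\Sigma)$, where comparing heights forces $\lambda'=\frac{\lambda+t}{1-t}$.

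However, that check is the whole content of the lemma, and you leave it undone. Worse, the fallback you propose has the wrong sign. Solving for $\mathbf z$ gives
\[
\mathbf z=t\mathbf o+(1-t)\,\frac{\lambda\mathbf y+t\mathbf x_i}{\lambda+t},
\]
where $\mathbf x_i$ carries a positive weight. By contrast, $\mathbf y+s\mathbf u_i=\mathbf y+s\mathbf o-s\mathbf x_i$ gives $\mathbf x_i$ the weight $-s$. Matching coefficients (whenever $\mathbf y-\mathbf o$ and $\mathbf x_i-\mathbf o$ are linearly independent), $\mathbf z$ is an affine combination of $\mathbf o$ and $\mathbf y+s\mathbf u_i$ only for $s=-t/\lambda$. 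So Lemma~\ref{lemma: base containment} at the positive parameter $t/(1+\lambda)$ cannot close the argument. That parameter pushes $\mathbf y$ towards $\mathbf o$, while beyond the apex the base is seen reflected and $\mathbf y$ must be pushed the other way. A limiting argument does not change this.

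The missing idea is that Lemma~\ref{lemma: base containment} holds on all of $(-\infty,1-c]$, and you should use it at the negative value $-t/\lambda$. For $\lambda>0$, write $\mathbf p=(1+\lambda)\mathbf v-\lambda\bigl(\mathbf y-\frac t\lambda\mathbf u_i\bigr)$. The lemma gives $\mathbf y-\frac t\lambda\mathbf u_i\in\mathcal{H}^{1+t/\lambda}_{\mathbf o}(\Sigma)$, and a direct computation yields
\[
(1+\lambda)\mathbf v-\lambda\,\mathcal{H}^{1+t/\lambda}_{\mathbf o}(\Sigma)=\mathcal{H}^{1-t}_{\mathbf o}\Bigl(\mathbf v-\tfrac{\lambda+t}{1-t}(\Sigma-\mathbf v)\Bigr)\subset\mathcal{H}^{1-t}_{\mathbf o}(\ocone{\Sigma}{\mathbf v}).
\]
The case $\lambda=0$ follows by closedness. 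This is the paper's proof.

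In your notation it amounts to $\frac{\lambda\mathbf y+t\mathbf x_i}{\lambda+t}=\mathcal{H}^{\lambda/(\lambda+t)}_{\mathbf o}\bigl(\mathbf y-\frac t\lambda\mathbf u_i\bigr)\in\Sigma$. You can also see this directly: it is a convex combination of $\mathbf y\in\Sigma_i\subset\Sigma$ and $\mathbf x_i\in\Sigma$, which covers $\lambda=0<t$ as well. So the coefficients do close up directly, and the shrinking factor $c$ plays no role in this inclusion beyond guaranteeing $t<1$.
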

\begin{proof}
  By the definition \eqref{eq:opposite_cone}, we have to prove that
  \[ (1+\lambda)\mathbf v -\lambda \Sigma_i + t\mathbf{u}_i \subset \mathcal{H}_{\mathbf{o}}^{1-t}(\ocone{\Sigma}{\mathbf{v}})\]
for every $\lambda \geq 0$. By continuity we can assume $\lambda>0$. In that case, by Lemma~\ref{lemma: base containment}
\begin{align*} (1+\lambda) \mathbf v -\lambda \Sigma_i + t\mathbf{u}_i&= (1+\lambda)\mathbf v - \lambda( \Sigma_i - \frac{t}{\lambda}\mathbf{u}_i)\\
  & \subset (1+\lambda)\mathbf v - \lambda \mathcal{H}_{\mathbf{o}}^{1+\frac t \lambda}(\Sigma) = \mathcal{H}_{\mathbf{o}}^{1-t}\Big(\mathbf{v} - \frac{\lambda + t}{1-t}(\Sigma - \mathbf{v})\Big).
\end{align*}
  But $\mathbf{v} - \frac{\lambda + t}{1-t}(\Sigma - \mathbf{v}) \subset \ocone{\Sigma}{\mathbf{v}}$, so this concludes the proof of the lemma.
\end{proof}
\begin{figure}[htbp]
  \centering
  \begin{minipage}{0.3\textwidth}
    \includegraphics[width=\textwidth]{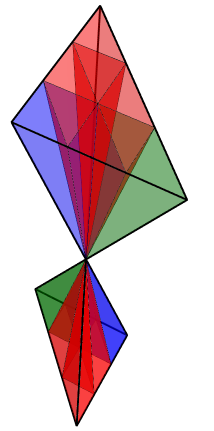}
  \end{minipage}
  \hfill
  \begin{minipage}{0.3\textwidth}
    \includegraphics[width=\textwidth]{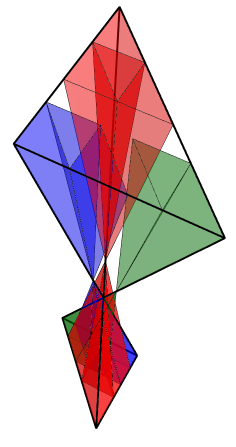}
  \end{minipage}
  \hfill
  \begin{minipage}{0.3\textwidth}
    \includegraphics[width=1.2\textwidth]{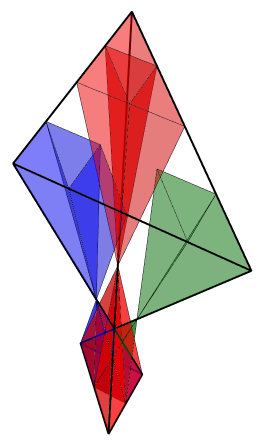}
  \end{minipage}
  \caption{Lemmas \ref{lemma: cone of directions} and \ref{lemma: non intersection}  for a $2$-simplex with fair subdivision into $9$  cells with $\mathbf{o}$ its basis' barycenter. The upper pyramids are the reflections of the lower ones (the $T_i$'s) across their respective apexes. $C(t)$ and its opposite cone are shown as black wire frames.}
  \label{fig: base case}
\end{figure}

\begin{lemma}\label{lemma: non intersection}
Assume that the family $(\Sigma_i,\mathbf{x}_i)_{i \in I}$ is affinely separated. For every $t\geq 0$ and every $i \neq j \in I$,
\begin{equation*}
    \Big(\ocone{\Sigma_i}{\mathbf{v}} + t \mathbf{u}_i\Big) \cap \Big(\ocone{\Sigma_j}{\mathbf{v}} + t \mathbf{u}_j\Big)
\end{equation*}
has dimension $<d$.
\end{lemma}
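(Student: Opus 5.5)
The plan is to parametrize both translated opposite cones by a common chart on the half-space beyond $\mathbf v$, and show that affine separation forces every common point to satisfy a nontrivial equation. Fix $i\neq j$ and $t\geq 0$. Let $\varphi$ be the non-constant affine map on $\aff(\Sigma)$ given by affine separation, so $\varphi\geq 0$ on $\Sigma_i$, $\varphi\leq 0$ on $\Sigma_j$ and $\varphi(\mathbf x_i)\geq\varphi(\mathbf x_j)$. Write $\ell$ for its linear part on $\overrightarrow{\aff(\Sigma)}$.

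First I use convexity to reduce the claim to a measure statement. Each $\ocone{\Sigma_i}{\mathbf v}$ is convex, being the union over $\lambda\geq0$ of the convex sets $(1+\lambda)\mathbf v-\lambda\Sigma_i$, which are nested in the right way. Hence the intersection of the two translated cones is convex. A convex set in $\R^d$ with Lebesgue measure zero has empty interior and therefore dimension $<d$. So it suffices to show the intersection is Lebesgue-null.

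Next I set up coordinates. Every point of $\ocone{\Sigma_i}{\mathbf v}+t\mathbf u_i$ has the form
\[\mathbf p=(1+\lambda)\mathbf v-\lambda\mathbf x+t(\mathbf o-\mathbf x_i),\qquad \mathbf x\in\Sigma_i,\ \lambda\geq 0.\]
The translation vector $\mathbf o-\mathbf x_i$ lies in $\overrightarrow{\aff(\Sigma)}$, so it does not change the signed distance to $\aff(\Sigma)$. Thus $\lambda$ is determined by $\mathbf p$ alone, through its height, and is the same whether $\mathbf p$ is viewed in the cone indexed by $i$ or by $j$.

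The case $\lambda=0$ is degenerate: there $\mathbf p$ lies in the affine space through $\mathbf v$ parallel to $\aff(\Sigma)$, a hyperplane, hence a null set. For $\lambda>0$, set $s=t/\lambda$ and $\mathbf y=\mathbf x-s(\mathbf o-\mathbf x_i)\in\aff(\Sigma)$, so that $\mathbf p=(1+\lambda)\mathbf v-\lambda\mathbf y$. The map $\mathbf p\mapsto(\lambda,\mathbf y)$ is an affine-in-each-fibre diffeomorphism from the open half-space $\{\lambda>0\}$ onto $(0,\infty)\times\aff(\Sigma)$. Consequently, $\mathbf p$ lies in both translated cones exactly when
\[\mathbf y+s(\mathbf o-\mathbf x_i)\in\Sigma_i\quad\text{and}\quad \mathbf y+s(\mathbf o-\mathbf x_j)\in\Sigma_j .\]

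Now I apply $\varphi$ to these two memberships. They give $A:=\varphi(\mathbf y)+s\,\ell(\mathbf o-\mathbf x_i)\geq 0$ and $B:=\varphi(\mathbf y)+s\,\ell(\mathbf o-\mathbf x_j)\leq 0$. Their difference is
\[B-A=s\bigl(\varphi(\mathbf x_i)-\varphi(\mathbf x_j)\bigr)\geq 0,\]
since $s\geq0$. Hence $0\geq B\geq A\geq 0$, and in particular $\varphi(\mathbf y)+\tfrac{t}{\lambda}\,\ell(\mathbf o-\mathbf x_j)=0$.

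To finish, fix $\lambda>0$. Because $\varphi$ is non-constant, this equation defines an affine hyperplane of $\aff(\Sigma)$ in the variable $\mathbf y$. In the coordinates $(\lambda,\mathbf y)$ the intersection therefore lies in a set whose every $\lambda$-slice is a hyperplane, which is null by Fubini. Since the chart is a diffeomorphism, its preimage in $\R^d$ is null as well. Together with the null slice $\{\lambda=0\}$, the intersection has measure zero, and by the convexity reduction it has dimension $<d$.

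The key step is the sign bookkeeping that collapses the two inequalities to equalities. This uses exactly the condition $\varphi(\mathbf x_i)\geq\varphi(\mathbf x_j)$ together with $t\geq0$. The remaining care is only in verifying that $\lambda$ is a common coordinate for both cones.
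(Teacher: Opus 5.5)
Your proof is correct. Its core step is the same as the paper's: apply $\varphi$ to both memberships, then use $t\ge 0$ and $\varphi(\mathbf x_i)\ge\varphi(\mathbf x_j)$ to force the two inequalities into an equality.

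The difference is in the packaging. The paper extends $\varphi$ affinely to all of $\R^d=\aff(\Sigma,\mathbf v)$ by setting $\varphi(\mathbf v)=0$. Then $\varphi$ maps $\ocone{\Sigma_i}{\mathbf v}+t\mathbf u_i$ into $(-\infty,t(\varphi(\mathbf o)-\varphi(\mathbf x_i))]$ and $\ocone{\Sigma_j}{\mathbf v}+t\mathbf u_j$ into $[t(\varphi(\mathbf o)-\varphi(\mathbf x_j)),\infty)$. These half-lines meet in at most one point, so the intersection lies in one level set of a non-constant affine function on $\R^d$, which is a hyperplane.

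In your chart this extension satisfies $\varphi(\mathbf p)=(1+\lambda)\varphi(\mathbf v)-\lambda\varphi(\mathbf y)=-\lambda\varphi(\mathbf y)$. So your slice-wise equation $\lambda\varphi(\mathbf y)+t\,\ell(\mathbf o-\mathbf x_j)=0$ is exactly the single global equation $\varphi(\mathbf p)=t(\varphi(\mathbf o)-\varphi(\mathbf x_j))$. Your slice hyperplanes therefore assemble into one hyperplane of $\R^d$. Seeing this would let you drop several steps: the $(\lambda,\mathbf y)$ chart, the Fubini argument, the separate $\lambda=0$ slice, and the reduction from measure zero to dimension $<d$ via convexity. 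It also gives a sharper conclusion: an explicit hyperplane, and an empty intersection when $t>0$ and $\varphi(\mathbf x_i)>\varphi(\mathbf x_j)$. Your longer route is nonetheless sound.

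One justification needs fixing. The slices $(1+\lambda)\mathbf v-\lambda\Sigma_i$ are not nested; they sit at different heights, and a union of non-nested convex sets need not be convex. Convexity of $\ocone{\Sigma_i}{\mathbf v}$ holds for a different reason: it is the point reflection through $\mathbf v$ of the cone with apex $\mathbf v$ over the convex set $\Sigma_i$, and such a cone is convex.
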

\begin{proof}
Let $\varphi: \aff(\Sigma) \to \R$ be the non constant affine map separating $(\Sigma_i,\mathbf{x}_i)$ and $(\Sigma_j,\mathbf{x}_j)$. Extend $\varphi$ affinely to  $\aff(\Sigma,\mathbf{v})$ by setting $\varphi(\mathbf{v})=0$. Then 
\[ \varphi(\ocone{\Sigma_i}{\mathbf{v}} + t \mathbf{u}_i) = \ocone{\varphi(\Sigma_i)}{\varphi(\mathbf{v})} + t (\varphi(\mathbf{o}) - \varphi(\mathbf{x}_i)) = (-\infty,t (\varphi(\mathbf{o}) - \varphi(\mathbf{x}_i))],\]
and 
\[ \varphi(\ocone{\Sigma_j}{\mathbf{v}} + t \mathbf{u}_j) = [t (\varphi(\mathbf{o}) - \varphi(\mathbf{x}_j)),\infty),\]
so \[
  \Big(\ocone{\Sigma_i}{\mathbf{v}} + t \mathbf{u}_i\Big) \cap \Big(\ocone{\Sigma_j}{\mathbf{v}} + t \mathbf{u}_j\Big) \subset \{ \mathbf{z} \in \aff(\Sigma,\mathbf{v})=\R^d \mid \varphi(\mathbf{z})=t(\varphi(\mathbf{o}) - \varphi(\mathbf{x}_i))\}.\]
  Note that the intersection can even be empty if $\varphi(x_j)<\varphi(x_i)$, but this case still verifies the containment above. The right hand set is a $(d-1)$-dimensional affine space, concluding the proof.
\end{proof}

Here is the summary of what we have proved:
\begin{proposition}\label{prop: base case}
Let $\Sigma \subset \R^d$ be a  $(d-1)$-dimensional polytope with a partition $(\Sigma_i)_{i \in I}$ satisfying \eqref{eq:assumption_shrinking}. 

$ \forall\;\mathbf{v}\notin \operatorname{Aff}(\Sigma)$, $\forall\;\mathbf{o}\in \operatorname{Aff}(\Sigma)$ and $\forall\;t\in (0,1-c]$, if $T=\operatorname{conv}(\Sigma, \mathbf{v})$, $T_i=\operatorname{conv}(\Sigma_i, \mathbf{v})$, $\widetilde{T}=\ocone{\Sigma}{\mathbf{v}}$ and $\widetilde{T}_i=\ocone{\Sigma_i}{\mathbf{v}}$, then there exist vectors $(\mathbf{w}_i)_{i\in I}\subset \overrightarrow{\operatorname{Aff}(\Sigma)}$ such that:
\begin{enumerate}[(i)]
    \item $\displaystyle \bigg|\bigg[\bigcup_{i\in I} T_i+\mathbf{w}_i\bigg]\setminus \mathcal{H}^{1-t}_\mathbf{o}(T)\bigg|\leq \frac{t^d|T|}{(1-c)^d} $.
    \item $\displaystyle \big(\widetilde{T}_i+\mathbf{w}_i\big)_{i\in I}$ are contained in $\mathcal{H}_\mathbf{o}^{1-t}(\widetilde{T})$, and have pairwise disjoint interiors if the family $(\Sigma_i,\mathbf{x}_i)_{i \in I}$ is affinely separated.
    \item The $\Sigma_i+
    \mathbf{w}_i$ are contained in $\mathcal{H}_\mathbf{o}^{1-t}(\Sigma)$ $\forall\;i$.
\end{enumerate}
\end{proposition}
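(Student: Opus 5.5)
The plan is to take $\mathbf{w}_i = t\mathbf{u}_i = t(\mathbf{o}-\mathbf{x}_i)$, with $\mathbf{x}_i$ the special points from \eqref{eq:assumption_shrinking}. Both $\mathbf{o}$ and $\mathbf{x}_i$ lie in $\aff(\Sigma)$, so $\mathbf{w}_i\in\overrightarrow{\aff(\Sigma)}$. Each item then follows directly from one of the preceding lemmas; only (i) needs a short volume computation.

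Item (iii) is Lemma~\ref{lemma: base containment} with $t\in(0,1-c]$. For item (ii), the containment $\widetilde T_i+\mathbf{w}_i\subset\mathcal{H}^{1-t}_\mathbf{o}(\widetilde T)$ is exactly Lemma~\ref{lemma: cone of directions}. For the disjointness, assume the family is affinely separated. By Lemma~\ref{lemma: non intersection}, for $i\neq j$ the set $(\widetilde T_i+\mathbf{w}_i)\cap(\widetilde T_j+\mathbf{w}_j)$ lies in an affine hyperplane. If the two interiors met, their intersection would be a nonempty open set contained in this intersection, which is impossible. Hence the interiors are pairwise disjoint.

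For item (i), Lemma~\ref{lemma: lemma containment peaks} gives
\[
\Big[\bigcup_{i\in I} T_i+\mathbf{w}_i\Big]\setminus\mathcal{H}^{1-t}_\mathbf{o}(T)\subset\bigcup_{i\in I}\Big(\mathcal{H}^{t/(1-c)}_\mathbf{v}(T_i)+t\mathbf{u}_i\Big).
\]
By subadditivity and translation invariance of Lebesgue measure, the measure of the right-hand side is at most $\sum_i\big|\mathcal{H}^{t/(1-c)}_\mathbf{v}(T_i)\big|=\big(\tfrac{t}{1-c}\big)^d\sum_i|T_i|$. It remains to check that $\sum_i|T_i|=|T|$. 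The $\Sigma_i$ cover $\Sigma$ and pairwise intersect in sets of dimension $<d-1$. Coning from $\mathbf{v}$, the $T_i$ therefore cover $T$ and pairwise intersect in sets of dimension $<d$, which are null sets. Equivalently, $|T_i|=\tfrac1d\,\mathrm{dist}(\mathbf{v},\aff\Sigma)\,|\Sigma_i|_{d-1}$, and the $(d-1)$-volumes $|\Sigma_i|_{d-1}$ add up to $|\Sigma|_{d-1}$. This gives the bound $t^d|T|/(1-c)^d$.

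The only step requiring care is this volume bookkeeping in (i): the sum over $i$ must not lose a factor $|I|$, so the additivity $\sum_i|T_i|=|T|$ coming from the partition property is essential. Every other step is a direct citation of a lemma already proved.
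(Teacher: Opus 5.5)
Your proposal is correct and is the paper's proof: the paper also takes $\mathbf{w}_i = t\mathbf{u}_i$. It cites Lemma~\ref{lemma: base containment} for (iii) and Lemmas~\ref{lemma: cone of directions} and~\ref{lemma: non intersection} for (ii), and it gets (i) from Lemma~\ref{lemma: lemma containment peaks} via the union bound, the scaling factor $(t/(1-c))^d$ and $\sum_i |T_i| = |T|$. Your extra justifications only make explicit what the paper leaves implicit: that $\mathbf{w}_i \in \overrightarrow{\aff(\Sigma)}$, that an intersection lying in a hyperplane forces disjoint interiors, and that the volumes $|T_i|$ add up to $|T|$.
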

\begin{proof}
We prove the result with $\mathbf{w}_i = t\mathbf{u}_i$.

Let us first prove (i). By Lemma \ref{lemma: lemma containment peaks}, using that a homothety of factor $\lambda$ scales the Lebesgue measure by $\lambda^d$, we have
\begin{equation*}
     \bigg|\bigg[\bigcup_{i\in I} 
     T_i+\mathbf{w}_{i}\bigg]\setminus \mathcal{H}^{1-t}_\mathbf{o}(T)\bigg|\leq \sum_{i\in I}|\big(T_i+\mathbf{w}_{i}\big)\setminus \mathcal{H}^{1-t}_\mathbf{o}(T)|
    \leq \sum_{i\in I} \frac{t^d}{(1-c)^d}|T_i|= \frac{t^d|T|}{(1-c)^d} .
\end{equation*}

The first part of (ii) is an application of Lemma~\ref{lemma: cone of directions}, and the second part is Lemma~\ref{lemma: non intersection}.

(iii) is a direct consequence of the fact that, thanks to Lemma~\ref{lemma: base containment}
\[\Sigma_i+\mathbf{w}_i= \Sigma_{i}+t\mathbf{u}_{i}\subset \mathcal{H}^{1-t}_\mathbf{o}(\Sigma).\qedhere\]
\end{proof}

\section{The Perron tree construction}\label{sec: perron iterative}
\subsection{The induction argument}
We now explain how to iterate Proposition~\ref{prop: base case}. The standing assumption in this section is the following: $\Sigma \subset \R^d$ is a $(d-1)$-polytope  with a partition $(\Sigma_i)_{i \in I}$ satisfying \eqref{eq:assumption_shrinking}, and with the additional property that every piece $\Sigma_i$ is the image of $\Sigma$ by an affine map. 

This additional property holds for example for a fair partition (in which case we will always choose the affine map in $|I|^{1-d} O(d)$), but also for partitions of the simplex into simplices. It allows to define, for every $n \geq 1$, the $n$-step iterated partition $(\Sigma_i^{(n)})_{i \in I^n}$.
\begin{proposition}\label{prop: perron tree}

Let $\mathbf{v}\notin \operatorname{Aff}(\Sigma)$, $\mathbf{o}\in \operatorname{Aff}(\Sigma)$ and $t_1, t_2,\dots,t_n\in (0,1-c]$. Write  $P_m=\prod_{j=1}^{m}(1-t_j)$ for $m\in \N$, $T=\operatorname{conv}(\Sigma, \mathbf{v})$, $T_i=\operatorname{conv}(\Sigma_i^{(n)}, \mathbf{v})$, $\widetilde{T}=\ocone{\Sigma}{\mathbf{v}}$ and $\widetilde{T}_i=\ocone{\Sigma_i^{(n)}}{\mathbf{v}}$. Then there exists vectors $(\mathbf{w}_i)_{i\in I^n}\subset \overrightarrow{\operatorname{Aff}(\Sigma)}$ such that:
\begin{enumerate}[(i)]
    \item $\displaystyle \bigg|\bigg[\bigcup_{i\in I^n} T_i+\mathbf{w}_i\bigg]\setminus\mathcal{H}_\mathbf{o}^{P_n}(T)\bigg|\leq \frac{|T|}{(1-c)^d}\sum_{i=1}^n\bigg(\prod_{j=1}^{i-1}(1-t_j)^d\bigg)t_{i}^d$.
    \item $\displaystyle \big(\widetilde{T}_i+\mathbf{w}_i\big)_{i\in I^n}$ are contained in $\mathcal{H}_\mathbf{o}^{P_n}(\widetilde{T})$, and have pairwise disjoint interiors if the family $(\Sigma_i,\mathbf{x}_i)_{i \in I}$ is affinely separated.
\item The $\Sigma_i^{(n)}+
    \mathbf{w}_i$ are contained in $\mathcal{H}_\mathbf{o}^{P_n}(\Sigma)$ $\forall\;i$.
\end{enumerate}
\end{proposition}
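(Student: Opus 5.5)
Proceed by induction on $n$. The case $n=1$ is Proposition~\ref{prop: base case} with $t=t_1$, since $P_1=1-t_1$. For the inductive step, view the $(n)$-step partition as the $(n-1)$-step partition $(\Sigma_j^{(n-1)})_{j\in I^{n-1}}$ with each cell $\Sigma_j^{(n-1)}=A_j(\Sigma)$ further split into $(A_j(\Sigma_i))_{i\in I}$. Here $A_j$ is the affine map identifying $\Sigma$ with $\Sigma_j^{(n-1)}$. Equivalently, and more conveniently, first split $\Sigma$ once into $(\Sigma_i)_{i\in I}$ and then apply the $(n-1)$-step statement inside each $\Sigma_i$. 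I will use the second ordering, because it lets the first step use the big parameter $t_1$ and makes the factors $(1-t_j)$ appear as scalings.

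\textbf{Step 1.} Apply Proposition~\ref{prop: base case} with $t=t_1$. This gives $\mathbf{w}_i^{(1)}=t_1\mathbf{u}_i$, after which $\Sigma_i+\mathbf{w}^{(1)}_i\subset \mathcal{H}_\mathbf{o}^{1-t_1}(\Sigma)$, the peaks outside $\mathcal{H}_\mathbf{o}^{1-t_1}(T)$ have total volume at most $t_1^d|T|/(1-c)^d$, and the opposite cones are contained in $\mathcal{H}_\mathbf{o}^{1-t_1}(\widetilde T)$ and are pairwise interior-disjoint.

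\textbf{Step 2.} Rather than recursing inside each $\Sigma_i$, it is cleaner to recurse on the shrunk core. Set $\Sigma'=\mathcal{H}_\mathbf{o}^{1-t_1}(\Sigma)$ and $\mathbf{v}'=\mathcal{H}_\mathbf{o}^{1-t_1}(\mathbf{v})$, so that $C(t_1)=\conv(\Sigma',\mathbf{v}')$. The key observation is that the translation $\mathbf{w}$ keeps the apex at $\mathbf{v}$: each translated pyramid $T_i+\mathbf{w}$ has apex $\mathbf{v}+\mathbf{w}$, a translate of $\mathbf{v}$, not $\mathbf{v}'$. So the induction must be organised so that all translations are parallel to $\aff(\Sigma)$ and act rigidly on whole subtrees ("moved as a block", as in Figure~\ref{fig: 2d perron}).

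Concretely, define $\mathbf{w}_{(i_1,\dots,i_n)}=\sum_{m=1}^n \mathbf{w}^{(m)}$, where $\mathbf{w}^{(m)}$ is the step-$m$ translation $t_m\mathbf{u}$ applied at generation $m$. It is computed for the already-moved generation-$(m-1)$ configuration, relative to the scaled copy $\mathcal{H}_\mathbf{o}^{P_{m-1}}(\Sigma)$ with distinguished point $\mathbf{o}$. The induction hypothesis, applied to $\mathcal{H}_\mathbf{o}^{P_{m-1}}$ of the data (the pyramid $\mathcal{H}_\mathbf{o}^{P_{m-1}}(T)$ has volume $P_{m-1}^d|T|$), gives three things at step $m$. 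First, the new overflow outside $\mathcal{H}_\mathbf{o}^{P_m}(T)$ has volume at most $P_{m-1}^d t_m^d|T|/(1-c)^d$. Second, the bases stay in $\mathcal{H}_\mathbf{o}^{P_m}(\Sigma)$. Third, the cones stay in $\mathcal{H}_\mathbf{o}^{P_m}(\widetilde T)$ and remain interior-disjoint.

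Summing the overflows over $m$ gives (i), because $\mathcal{H}_\mathbf{o}^{P_m}(T)\subset \mathcal{H}_\mathbf{o}^{P_{m-1}}(T)$ when $\mathbf{o}\in\Sigma$. If $\mathbf{o}\notin\Sigma$ this nesting fails, and instead I will bound the complement by a union over $m$ of the sets moved out at step $m$.

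\textbf{Main obstacle.} The delicate point is reconciling the apex. Proposition~\ref{prop: base case} applies to pyramids over a partition with a common apex, but after step $1$ the apex of the pieces is $\mathbf{v}+\mathbf{w}^{(1)}_i$, not $\mathcal{H}_\mathbf{o}^{1-t_1}(\mathbf{v})$. I would therefore work at each generation with the fixed apex $\mathbf{v}$ and the translated bases. The pyramid over the moved pieces $\Sigma^{(m-1)}_j+\mathbf{w}_j$ is a sheared image of a pyramid, and shears along $\aff(\Sigma)$ preserve volume, the homothety structure of Lemmas~\ref{lemma: base containment}--\ref{lemma: non intersection}, and the inclusions in the scaled cores. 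I expect verifying this shear-compatibility to be the technical heart of the argument. In particular, it must be checked for the cone statement (ii), which is where affine separation must be transported generation by generation, using separating maps $\varphi$ extended with $\varphi(\mathbf{v})=0$ exactly as in Lemma~\ref{lemma: non intersection}.
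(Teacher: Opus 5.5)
\textbf{There is a genuine gap: the way you combine translations across generations is wrong.} Your first paragraph starts with the right decomposition: split $\Sigma$ once, then run the $(n-1)$-step construction inside each $\Sigma_i$. But you give $t_1$ to the top level. In Step 2 you then drop that recursion for a coarse-to-fine rule $\mathbf{w}_{(i_1,\dots,i_n)}=\sum_m t_m\mathbf{u}^{(m)}$, in which the coarse translations are never rescaled. The step ``apply the induction hypothesis to $\mathcal{H}_{\mathbf{o}}^{P_{m-1}}$ of the data'' has no valid input. After the coarse cells are moved, the generation-$m$ cells keep their original size and overlap, so they are not a $P_{m-1}$-scaled partition of anything.

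The conclusion actually fails. Take $d=2$ and $\Sigma=[0,1]$ (identify $\aff(\Sigma)$ with $\R$) with its dyadic partition, so $c=\frac{1}{2}$, $\mathbf{x}_0=0$, $\mathbf{x}_1=1$. Take $\mathbf{o}=\frac{1}{2}$ and $t_1=t_2=\frac{1}{2}$. The cell $[\frac{1}{4},\frac{1}{2}]$ is the child of $[0,\frac{1}{2}]$ with special point $\frac{1}{2}$. Your rule translates it by $\frac{1}{2}\cdot\frac{1}{2}+\frac{1}{2}\cdot 0=\frac{1}{4}$, with or without a factor $P_1$ on the second term. It lands in $[\frac{1}{2},\frac{3}{4}]\not\subset\mathcal{H}_{\mathbf{o}}^{1/4}([0,1])=[\frac{3}{8},\frac{5}{8}]$, so (iii) fails.

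\textbf{The missing idea is the order of scales.} The formula in (i) tells you which level gets which parameter. $t_1$ carries weight $P_0^d=1$, so it belongs to the \emph{finest} level, whose pyramids are never shrunk. $t_n$ carries weight $P_{n-1}^d$, so it belongs to the coarsest level, and that level's translation must be shrunk by the factor $P_{n-1}$ produced by the finer levels.

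The paper proceeds as follows:
\begin{itemize}
\item It applies the base case to $\Sigma$ with $t_n$, giving $\mathbf{w}'_{i_n}=t_n\mathbf{u}_{i_n}$.
\item It applies the $(n-1)$-step hypothesis inside each $\Sigma_{i_n}$ with the \emph{same} $\mathbf{v}$, $\mathbf{o}$ and with $t_1,\dots,t_{n-1}$.
\item It sets $\mathbf{w}_{(i',i_n)}=\mathbf{w}^{i_n}_{i'}+P_{n-1}\mathbf{w}'_{i_n}$. In the example above this gives translation $\frac{1}{8}$, and the cell fits exactly.
\end{itemize}
The identity $\mathcal{H}_{\mathbf{o}}^{P_{n-1}}(X)+P_{n-1}\mathbf{w}=\mathcal{H}_{\mathbf{o}}^{P_{n-1}}(X+\mathbf{w})$ then transports all three base-case conclusions by $\mathcal{H}_{\mathbf{o}}^{P_{n-1}}$, with volumes multiplied by $P_{n-1}^d$. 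For (i) one uses the inclusion $\bigcup(A_{i_n}\setminus C)\subset\bigcup(A_{i_n}\setminus B_{i_n})\cup\bigcup(B_{i_n}\setminus C)$ with $B_{i_n}=\mathcal{H}_{\mathbf{o}}^{P_{n-1}}(T_{i_n}+\mathbf{w}'_{i_n})$. This needs no nesting of cores, so no assumption $\mathbf{o}\in\Sigma$.

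\textbf{Your ``main obstacle'' is not a real obstacle.} The translated pyramids keep their apex $\mathbf{v}+\mathbf{w}_i$ throughout. The mismatch with the lower apex of the cores is exactly what the overflow terms measure (Lemma~\ref{lemma: lemma containment peaks}), so nothing needs reconciling. Replacing $T_j+\mathbf{w}_j$ by $\conv(\Sigma_j+\mathbf{w}_j,\mathbf{v})$ changes the sets that (i) and (ii) are about. Moreover, a shear fixing $\mathbf{v}$ and translating $\aff(\Sigma)$ does not commute with $\mathcal{H}_{\mathbf{o}}^{\lambda}$, so it would not preserve the homothety structure you want.

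Finally, (ii) needs no transport of separating maps. Cones in the same block are interior-disjoint by the inductive hypothesis. Cones in different blocks lie in the interior-disjoint sets $\mathcal{H}_{\mathbf{o}}^{P_{n-1}}(\widetilde{T}_{i_n}+\mathbf{w}'_{i_n})$.
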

\begin{proof}
   We will proceed by induction in the number of iterations $n\in \N$. The base case $n=1$ is exactly Proposition~\ref{prop: base case}. Assume that the statement holds up to $(n-1)$, and let us prove it for $n$:
   
  Apply the proposition for $n=1$, to $\Sigma$, $\mathbf{v}$, $\mathbf{o}$ and $t_n$ to obtain vectors $(\mathbf{w}_{i_n}')_{i_{n}\in I}$ that satisfy, if  $T_{i_n} = \conv(\Sigma_{i_n},\mathbf{v})$ and $\widetilde{T}_{i_n}=\ocone{\Sigma_{i_n}}{ \mathbf{v}}$, the following conclusions:
\begin{enumerate}[(a)]
    \item\label{item:induction11} $\displaystyle \bigg|\bigcup_{i_n\in I} \bigg(T_{i_n}+\mathbf{w}_{i_n}'\bigg)\setminus \mathcal{H}_\mathbf{o}^{1-t_n}(T)\bigg|\leq \frac{|T|t_n^d}{(1-c)^d}$,
    \item\label{item:induction12} $\displaystyle \big(\widetilde{T}_{i_n}+\mathbf{w}_{i_n}'\big)_{i_n\in I}$ are contained in $\mathcal{H}^{1-t_n}(\widetilde{T})$, and have pairwise disjoint interiors if the family $(\Sigma_i,\mathbf{x}_i)_{i \in I}$ is affinely separated.
    \item\label{item:induction13} $\Sigma_{i_n}+\mathbf{w}_{i_n}'$ are contained in $\mathcal{H}^{1-t_n}_\mathbf{o}(\Sigma)$ $\forall \;i_n\in I$.
\end{enumerate} 

Let us re-index the $n$-iterated partition $\big(\Sigma_i^{(n)}\big)_{i \in I^n}$ as $\big(\Sigma_{i'}^{i_n}\big)_{i'\in I^{n-1},i_n \in I}$ in such a way that for every fixed $i_{n}\in I$, $\big(\Sigma_{i'}^{i_n}\big)_{i'\in I^{n-1}}$ is an $(n-1)$-iterated partition of $\Sigma_{i_n}$. By the induction hypothesis we can apply the proposition for $(n-1)$, to the simplex $\Sigma_{i_{n}}$, with $\mathbf{v}$, $\mathbf{o}$ and $t_1$, ..., $t_{n-1}\in (0,1-c]$ to obtain vectors $\big(\mathbf{w}_{i'}^{i_n}\big)_{i'\in I^{n-1}}\subset \overrightarrow{\operatorname{Aff}(\Sigma_{i_n})}=\overrightarrow{\operatorname{Aff}(\Sigma)}$ such that:
\begin{enumerate}[(a),resume]
    \item\label{item:inductionn1} $\displaystyle \bigg|\bigcup_{i'\in I^{n-1}} \bigg(\operatorname{conv}(\Sigma_{i'}^{i_n}, \mathbf{v})+\mathbf{w}_{i'}^{i_n}\bigg)\setminus \mathcal{H}_\mathbf{o}^{P_{n-1}}(T_{i_n})\bigg|\leq \frac{|T_{i_n}|}{(1-c)^d}\sum_{k=1}^{n-1} P_{k-1}^dt_k^d$,
    \item\label{item:inductionn2} $\displaystyle \big(\ocone{\Sigma^{i_n}_{i'}}{ \mathbf{v}}+\mathbf{w}^{i_n}_{i'}\big)_{i'\in I^{n-1}}$ are contained in $\mathcal{H}^{P_{n-1}}_\mathbf{o}(\widetilde{T}_{i_n})$, and have pairwise disjoint interiors if the family $(\Sigma_i,\mathbf{x}_i)_{i \in I}$ is affinely separated.
    \item\label{item:inductionn3} The $\Sigma_{i'}^{i_n}+
    \mathbf{w}^{i_n}_{i'}$ are contained in $\mathcal{H}^{P_{n-1}}_\mathbf{o}(\Sigma_{i_n})$ $\forall \;i'\in I^{n-1}$.
\end{enumerate}

With all the above, define for $i=(i',i_n)\in I^n=I\times I^{n-1}$:
\begin{equation*}
 \mathbf{w}_i=\mathbf{w}^{i_n}_{i'}+P_{n-1}\mathbf{w}_{i_n}'\in \overrightarrow{\operatorname{Aff}(\Sigma)}\qquad.
\end{equation*}
Let us check that $(\mathbf{w}_i)_{i \in I^n}$ satisfy the conclusion of the proposition. 

In order to prove property (i), using the following inclusion, valid for any sets,
\[ \bigcup_{i_n} (A_{i_n} \setminus C) \subset \Big(\bigcup_{i_n} (A_{i_n} \setminus B_{i_n})\Big) \cup \Big(\bigcup_{i_n} (B_{i_n} \setminus C)\Big),\]
to $A_{i_n} = \bigcup_{i' \in I^{n-1}} (T_{i',i_n}+\mathbf{w}_{i'}^{i_n}+P_{n-1}\mathbf{w}_{i_n}')$, $B_{i_n} = \mathcal{H}_{\mathbf{o}}^{P_{n-1}}(T_{i_n} + \mathbf{w}_{i_n}')$ and $C= \mathcal{H}_{\mathbf{o}}^{P_{n}}(T)$, we obtain
\begin{multline*}
    \Big| \bigcup_i (T_i + \mathbf{w}_i)\setminus \mathcal{H}_{\mathbf{o}}^{P_n}(T) \Big| \leq 
\sum_{i_n} \Big|\bigcup_{i'} (T_{i',i_n} + \mathbf{w}_{i'}^{i_n} + P_{n-1} \mathbf{w}_{i_n}')\setminus \mathcal{H}_{\mathbf{o}}^{P_{n-1}}(T_{i_n} + \mathbf{w}_{i_n}')\Big|\\
+ \Big| \bigcup_{i_n} \mathcal{H}_{\mathbf{o}}^{P_{n-1}}\big((T_{i_n} + \mathbf{w}_{i_n}') \setminus \mathcal{H}_{\mathbf{o}}^{1-t_n}(T)\big)\Big|.
\end{multline*} 
We have $\mathcal{H}_{\mathbf{o}}^{P_{n-1}}(T_{i_n} + \mathbf{w}_{i_n}') = \mathcal{H}_{\mathbf{o}}^{P_{n-1}}(T_{i_n}) + P_{n-1}\mathbf{w}_{i_n}'$, so by the translation-invariance of the Lebesgue measure and \ref{item:inductionn1} the first term is
\[\sum_{i_n} \Big|\bigcup_{i'} (T_{i',i_n} + \mathbf{w}_{i'}^{i_n})\setminus \mathcal{H}_{\mathbf{o}}^{P_{n-1}}(T_{i_n})\Big| \leq \sum_{i_n\in I} \frac{ |T_{i_n}|}{(1-c)^d}\sum_{k=1}^{n-1} P_{k-1}^dt_k^d = \frac{ |T|}{(1-c)^d}\sum_{k=1}^{n-1} P_{k-1}^dt_k^d.\]
Since the homothety $\mathcal{H}_{\mathbf{o}}^{P_{n-1}}$ scales the Lebesgue's measure by the factor $P_{n-1}^d$, the second term is
\[ P_{n-1}^d \Big| \bigcup_{i_n} \big( (T_{i_n} + \mathbf{w}_{i_n}') \setminus \mathcal{H}_{\mathbf{o}}^{(1-t_n)}(T)\big)\Big|,\]
which by \ref{item:induction11} is less than $P_{n-1}^d|T| t_n^d(1-c)^{-d}$. Altogether this proves (i).

In order to prove (ii), if the family $(\Sigma_i,\mathbf{x}_i)_{i \in I}$ is affinely separated, take $i\neq j\in I^n$. If $i_n=j_n$, then $i'\neq j'$ and by \ref{item:inductionn2} 
\begin{equation*}
   \Big( \operatorname{int}(\ocone{\Sigma_i^{(n)}}{ \mathbf{v})}+\mathbf{w}_{i'}^{i_n}\Big)\cap \Big( \operatorname{int}(\ocone{\Sigma_j^{(n)}}{ \mathbf{v})}+\mathbf{w}_{j'}^{j_n}\Big)=\emptyset,
\end{equation*}
or equivalently since $\mathbf{w}_i = \mathbf{w}_{i'}^{i_n} + P_{n-1} \mathbf{w}_{i_n}'$ and $\mathbf{w}_j = \mathbf{w}_{j'}^{j_n} + P_{n-1} \mathbf{w}_{i_n}'$,
\begin{equation}\label{eq:cones_disjoint_interiors}
    \Big( \operatorname{int}(\widetilde{T}_i)+\mathbf{w}_{i}\Big)\cap \Big( \operatorname{int}(\widetilde{T}_j)+\mathbf{w}_{j}\Big)=\emptyset .
\end{equation}

If $i_n\neq j_n$, since by \ref{item:inductionn2} we have that
\begin{equation*}
    \operatorname{int}(\widetilde{T}_i)+\mathbf{w}_{i} = \Big( \operatorname{int}(\ocone{\Sigma_{i'}^{i_n}}{ \mathbf{v})}+\mathbf{w}_{i'}^{i_n}\Big)+P_{n-1}\mathbf{w}_{i_n}'
   \subset \operatorname{int}\Big(\mathcal{H}^{P_{n-1}}_\mathbf{o}(\widetilde{T}_{i_n}+\mathbf{w}_{i_n}')\Big),
\end{equation*}
and by \ref{item:induction12}, the two sets $\mathcal{H}^{P_{n-1}}_\mathbf{o}(\widetilde{T}_{i_n}+\mathbf{w}_{i_n}')$ and $\mathcal{H}^{P_{n-1}}_\mathbf{o}(\widetilde{T}_{j_n}+\mathbf{w}_{j_n}')$ have disjoint interiors, we deduce that \eqref{eq:cones_disjoint_interiors} holds also in this case.

Regardless of the fact that the partition is affinely separated, the previous containment identities also imply that for any $i\in I^n$:
\begin{equation*}
   \widetilde{T}_i+\mathbf{w}_{i} \subset \mathcal{H}^{P_{n-1}}_\mathbf{o}(\widetilde{T}_{i_n}+\mathbf{w}_{i_n}')\subset \mathcal{H}^{P_{n-1}}_\mathbf{o}( \mathcal{H}^{1-t_n}_\mathbf{o}(\widetilde T)) =  \mathcal{H}^{P_n}_\mathbf{o}(\widetilde{T}),
\end{equation*}
where the inclusions are due to \ref{item:inductionn2} and \ref{item:induction12} respectively.

(iii) is also immediate: by \ref{item:inductionn3}, we have
\begin{equation*}
    \Sigma_i^{(n)}+\mathbf{w}_i=\big(\Sigma_{i'}^{i_n}+\mathbf{w}_{i'}^{i_n}\big)+P_{n-1}\mathbf{w}_{i_n}'\subset \mathcal{H}^{P_{n-1}}_\mathbf{o}(\Sigma_{i_n})+P_{n-1}\mathbf{w}_{i_n}'=\mathcal{H}^{P_{n-1}}_\mathbf{o}(\Sigma_{i_n} + \mathbf{w}_{i_n}'),
\end{equation*}
which by \ref{item:induction13} is contained in $\mathcal{H}^{P_{n-1}}_\mathbf{o}(\mathcal{H}^{1-t_n}_\mathbf{o}(\Sigma))=\mathcal{H}^{P_n}_\mathbf{o}(\Sigma)$.
\end{proof}

The following set of figures illustrate our Perron tree construction for $d=3$, $\Sigma$ a $2$-simplex, $\mathbf{v}=(1,2,1)$, $\mathbf{o}=(1,2,0)$ and $n=2:$
\begin{figure}[htbp]
  \centering
  \begin{minipage}{0.3\textwidth}
    \includegraphics[width=\textwidth]{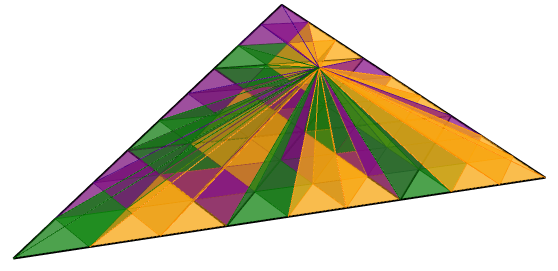}
  \end{minipage}
  \hfill
  \begin{minipage}{0.3\textwidth}
    \includegraphics[width=\textwidth]{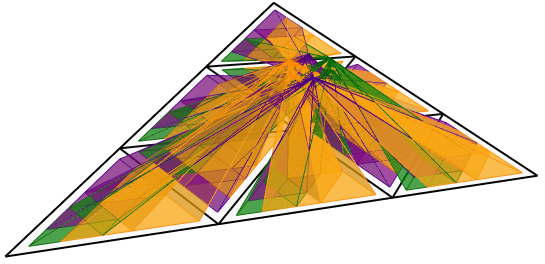}
  \end{minipage}
  \hfill
  \begin{minipage}{0.3\textwidth}
    \includegraphics[width=\textwidth]{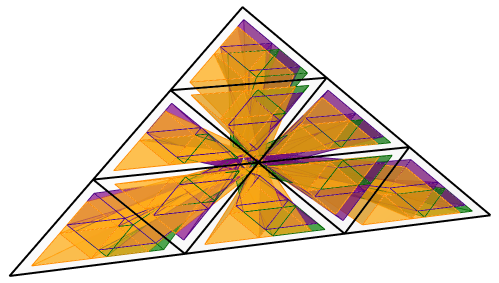}
  \end{minipage}
  \caption{First iteration: The first image shows a fair partition of a $2$-simplex into $81$ cells, which were obtained by applying a fair decomposition to the original simplex, and then successively to each of the sub cells formed after the first partition. Image two shows how we apply the basic construction to each group of 9 sub sub cells, taking the parent sub cell as our starting simplex. The third image is a bottom view of the construction, and allows us to better understand how the basic construction has been applied to each group of sub sub cells.}\label{fig: first iteration}
\end{figure}

\begin{figure}[htbp]
  \centering
  \begin{minipage}{0.3\textwidth}
    \includegraphics[width=\textwidth]{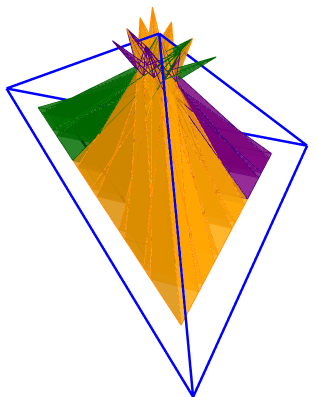}
  \end{minipage}
  \hfill
  \begin{minipage}{0.3\textwidth}
    \includegraphics[width=\textwidth]{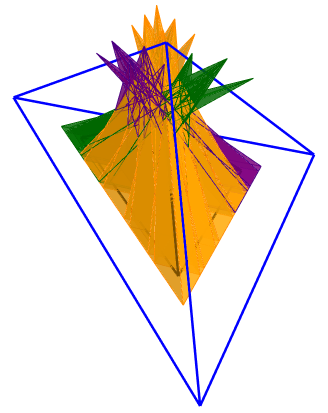}
  \end{minipage}
  \hfill
  \begin{minipage}{0.3\textwidth}
    \includegraphics[width=\textwidth]{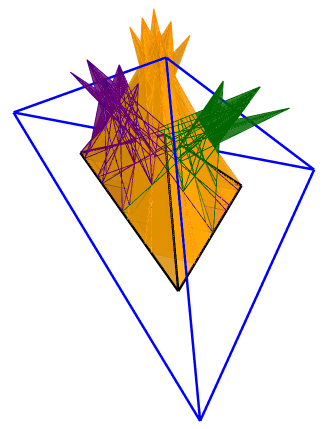}
  \end{minipage}
  \caption{Second iteration: In the last figure of \ref{fig: first iteration}, we see that we are left with 9 Perron trees, whose bases are all congruent to a separated scaled-down version of the fair simplicial decomposition of 9 cells. In the first image above, we have translated towards the barycenter of the pyramid's base these Perron trees as a block, until their bases just touch. In the second image we observe how we carry out again the basic construction, now using the 9 cores of each Perron tree as our starting pyramids (note the change in the colours of the cores in order to identify the translations defined in the basic construction applied to this second step); we remark that all of the pyramids in the same lower-level Perron tree are translated as a block in this iteration. The third image shows the final $n=2$ Perron tree resulting from our construction, where the black wire frame is the $\mathcal{H}_\mathbf{o}^{P_2}(T)$ core in the theorem.}\label{figure: second iteration}
\end{figure}
\begin{figure}[htbp]
  \centering
  \begin{minipage}{0.3\textwidth}
    \includegraphics[width=\textwidth]{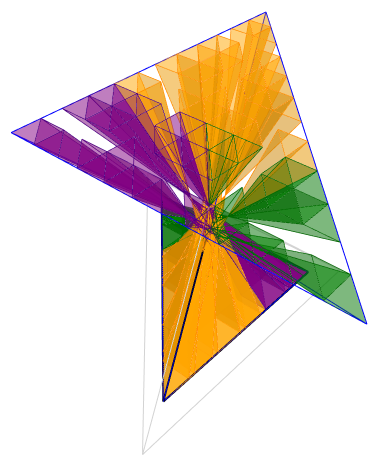}
  \end{minipage}
  \hfill
  \begin{minipage}{0.3\textwidth}
    \includegraphics[width=\textwidth]{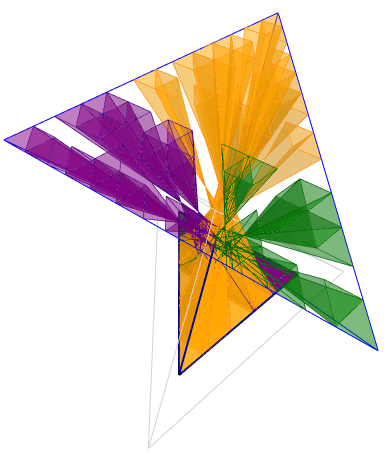}
  \end{minipage}
  \hfill
  \begin{minipage}{0.3\textwidth}
    \includegraphics[width=\textwidth]{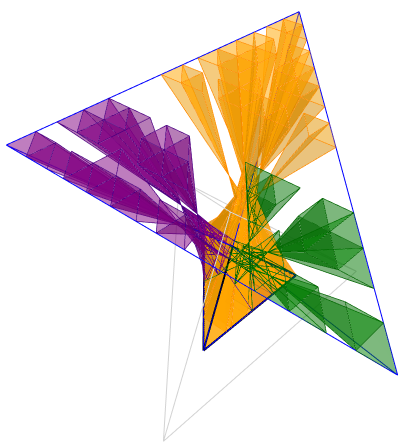}
  \end{minipage}
  \caption{Reflection of the pyramids in the construction for different values of $t_2\in (0,1/3]$, where the respective $t_2$ is that of the figure in \ref{figure: second iteration} immediately right above. Notice how all the opposite cones have disjoint pairwise interiors, and how the two-level iteration gives rise to the self replicating distribution of the reflections. The blue wire frame is the opposite cone of directions $\mathcal{H}_\mathbf{o}^{P_2}(\widetilde{T})$, so all reflected pyramids are contained within it. } 
\end{figure}

\subsection{The main construction}
With the same notation as in Proposition~\ref{prop: perron tree}, we obtain the bound
\begin{multline}\label{eq: volume after construction}
    \bigg|\bigcup_{i\in I^n} (T_i+\mathbf{w}_i)\bigg|\leq\bigg|\bigg[\bigcup_{i\in I^n} (T_i+\mathbf{w}_i)\bigg]\setminus \mathcal{H}_\mathbf{o}^{P_n}(T)\bigg|+|\mathcal{H}_\mathbf{o}^{P_n}(T)|\leq\\ \frac{|T|}{(1-c)^d}\sum_{i=1}^n P_{i-1}^d t_{i}^d+P_n^d|T|
    =|T|\bigg[\frac{1}{(1-c)^d}\sum_{i=1}^n(P_{i-1} - P_i)^d+P_n^d\bigg].
\end{multline}

We could choose all the $t_i$'s to be equal, but it is better to choose them increasing. A convenient (and optimal, see Remark~\ref{remark:n1-doptimal}) choice is to set, for $i\in[n]$,
\begin{equation}\label{eq: specific ti}
    t_i=\frac{1}{n-i+(1-c)^{-1}}.
\end{equation}
First of all note that $t_i\in (0,1-c]$ for all $i\in [n]$, so this is indeed a valid choice. Moreover, since
\begin{equation*}
    1-t_i=\frac{n-i-1+(1-c)^{-1}}{n-i+(1-c)^{-1}} = \frac{t_i}{t_{i+1}},
\end{equation*}
the expression for $P_m$ simplifies:
\begin{equation*}
   P_m= \prod_{j=1}^m (1-t_j)  = \prod_{j=1}^m \frac{t_j}{t_{j+1}}= \frac{t_1}{t_{m+1}}.    
\end{equation*}

Plugging the above into \eqref{eq: volume after construction}, we get the volume bound:
\begin{equation*}
     \bigg|\bigcup_{i\in I^n} (T_i+\mathbf{w}_i)\bigg|\leq |T|\bigg[
     \frac{1}{(1-c)^d} n t_1^d + \frac{t_1^d}{t_{n+1}^d}
     \bigg]\lesssim_{d,\Sigma}\frac{|T|}{n^{d-1}},
\end{equation*}
where the dependence of the bound on $c\in (0,1)$ is ultimately governed by the geometry of $\Sigma$ through condition \eqref{eq:assumption_shrinking}.

The key properties of the above construction are thus summarized in the next theorem:
\begin{theorem}[The d-dimensional Perron tree construction]\label{theo: perron tree}
Let $\Sigma \subset \R^d$ and $(\Sigma_i)_{i\in I}$ as before. For every $n$, there exists a family of vectors $\big(\mathbf{w}_i\big)_{i\in I^n}\subset \overrightarrow{\operatorname{Aff}(\Sigma)}$ such that, for every $\mathbf{v} \in \R^d\setminus \aff(\Sigma)$, the family $T_i = \conv(\Sigma_i^{(n)},\mathbf{v})$ forms a partition of the $d$-pyramid $T=\conv(\Sigma,v)$ satisfying the volume bound:
\begin{equation}\label{eq: general simplex volume}
    \bigg|\bigcup_{i\in I^n}(T_i+\mathbf{w}_i)\bigg|\lesssim_{d,\Sigma} \frac{|T|}{n^{d-1}},
\end{equation}
and such that the cones $\{\ocone{\Sigma_i^{(n)}}{\mathbf{v}}+\mathbf{w}_i\}$ are pairwise disjoint if the family $(\Sigma_i,\mathbf{x}_i)_{i \in I}$ is affinely separated.
\end{theorem}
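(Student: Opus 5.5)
The plan is to assemble the theorem from Proposition~\ref{prop: perron tree} with the explicit parameter choice \eqref{eq: specific ti}, checking three points: that the vectors do not depend on $\mathbf{v}$, that the volume estimate holds with a constant depending only on $d$ and $\Sigma$ (through $c$), and that the cones are disjoint under affine separation.

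\emph{Independence of $\mathbf{v}$.} Fix $\mathbf{o}\in\aff(\Sigma)$ once and for all, say the barycenter of $\Sigma$. I will check by induction on $n$ that the vectors produced in the proof of Proposition~\ref{prop: perron tree} depend only on $(\Sigma_i,\mathbf{x}_i)$, $\mathbf{o}$ and the $t_j$, and not on $\mathbf{v}$.
\begin{itemize}
\item In the base case, $\mathbf{w}_i=t\mathbf{u}_i=t(\mathbf{o}-\mathbf{x}_i)$, which does not involve $\mathbf{v}$.
\item In the inductive step, $\mathbf{w}_i=\mathbf{w}^{i_n}_{i'}+P_{n-1}\mathbf{w}'_{i_n}$ is built from quantities of the same type, so it is also independent of $\mathbf{v}$.
\end{itemize}
For the inductive application to $\Sigma_{i_n}$ I take the special points to be the images of the $\mathbf{x}_i$ under the affine identification $\Sigma\to\Sigma_{i_n}$. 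Condition \eqref{eq:assumption_shrinking} is affinely invariant, so it holds for $\Sigma_{i_n}$ with the same $c$. Affine separation is also preserved under affine bijections (compose $\varphi$ with the inverse map).

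\emph{Partition of $T$.} The sets $T_i=\conv(\Sigma^{(n)}_i,\mathbf{v})$ cover $T$ because the $\Sigma^{(n)}_i$ cover $\Sigma$. Two distinct pyramids meet in the cone over $\Sigma^{(n)}_i\cap\Sigma^{(n)}_j$, which has dimension $<d$. Hence the $T_i$ form a partition of $T$.

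\emph{Volume bound.} With the choice \eqref{eq: specific ti}, the computation preceding the theorem gives $P_m=t_1/t_{m+1}$, so $P_{i-1}t_i=t_1$ for every $i$. Substituting into \eqref{eq: volume after construction}, with the convention $t_{n+1}=(1-c)$, the right-hand side becomes
\[|T|\big[(1-c)^{-d}\,n\,t_1^d+t_1^d(1-c)^{-d}\big].\]
Since $t_1=(n-1+(1-c)^{-1})^{-1}\le 1/n$, this is at most $2(1-c)^{-d}n^{1-d}|T|$, which is $\lesssim_{d,\Sigma}|T|/n^{d-1}$ because $c$ depends only on $\Sigma$ and its partition. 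For $n=0$ the bound is trivial.

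\emph{Cones.} Disjointness is part (ii) of Proposition~\ref{prop: perron tree}, applied with this same $\mathbf{o}$. The statement says ``pairwise disjoint'', while the proposition only gives pairwise disjoint interiors, so I read the conclusion as disjointness of interiors; this is all that the later applications to tubes need.

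The main obstacle is the bookkeeping in the independence-of-$\mathbf{v}$ step: the re-indexing in the induction has to use the transported special points, so that the same $c$ and affine separation carry over to each sub-polytope. The rest is direct substitution.
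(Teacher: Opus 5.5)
Your proposal is correct and follows the paper's route: the theorem is obtained from Proposition~\ref{prop: perron tree} with the choice \eqref{eq: specific ti}. Your extra checks make explicit what the paper leaves implicit, namely independence of $\mathbf{v}$, transport of \eqref{eq:assumption_shrinking} and of affine separation to the sub-polytopes, and reading ``disjoint'' as ``disjoint interiors''. The last reading is in fact necessary, since cells with $\mathbf{x}_i=\mathbf{x}_j$ give translated cones sharing the point $\mathbf{v}+\mathbf{w}_i$. One small slip: the formula gives $t_{n+1}=\frac{1-c}{c}$, not $1-c$, so $P_n^d=t_1^d c^d(1-c)^{-d}$. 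Your expression overestimates this by a factor $c^{-d}$, so the bound $2(1-c)^{-d}n^{1-d}|T|$ you derive still holds.
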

\begin{proof}
As we have already justified, the theorem is immediate from Proposition~\ref{prop: perron tree} for $\mathbf{o} \in \aff(\Sigma)$ arbitrary and $t_1,\dots,t_n$ defined by \eqref{eq: specific ti}. 
\end{proof}

\begin{definition}
With previous notation, we will refer to the set
\begin{equation*}
    \bigcup_{i\in I^n}\Big( T_i+\mathbf{w}_i\Big)
\end{equation*}
constructed in Theorem \ref{theo: perron tree} as the $n$-iteration Perron tree construction of $T=\conv(\Sigma, \mathbf{v})$ relative to the $n$-step iterated partition $(\Sigma_i^{(n)})_{i \in I^n}$.
\end{definition}
\begin{remark}For the particularisation of $\Sigma=Q=[0,1]^{d-1}$, $\mathbf{o}=\mathbf{0}$ and any $\mathbf{v}\notin \R^{d-1}\times\{0\}$, one can check that the cells in the $n$-iteration dyadic decomposition of $Q$ can be expressed as
\begin{equation*}
    Q_i=\mathbf{x}_i+2^{-n}[0,1]^{d-1},\qquad \mathbf{x}_i=\sum_{j=1}^n\boldsymbol{\beta}_j2^{-j},\qquad \boldsymbol\beta_j\in \{0,1\}^{d-1}.
\end{equation*}
From the representation above, it is straightforward to see that the vectors $\{\mathbf{w}_i\}_{i\in [2^{d-1}]^n}$ of the $n$-th iteration Perron tree construction given by
\begin{equation*}
    \mathbf{w}_i=-\frac{1}{n+1}\sum_{j=1}^n\boldsymbol{\beta}_j2^{-j}(n-j+2)=-\sum_{j=1}^n\boldsymbol\beta_j2^{-j}\Big(1-\frac{j-1}{n+1}\Big)
\end{equation*}
correspond to the overall translation applied to each pyramid with base $Q_i$ and apex  $\mathbf{v}$.

Up to a change of variables, these formulas are the same as those provided by Keich in \cite{RefWorks:keich1999lp}, thus showcasing that we have been able to generalise his construction for higher dimensions. 

We see in particular that, if $T^{n,d}\subset \R^d$ is the $n$-iteration Perron tree in dimension $d$ associated to the dyadic decomposition of the cube with apex $\mathbf{v}=(0,\dots,0,1)$, and if for $h\in [0,1]$, $T^{n,d}_h$ is its slice at height $h$:
\[ T^{n,d}_h=\{\mathbf{x}\in \R^{d-1}\mid(\mathbf{x},h)\in T^{n,d}\},\]
then $T^{n,d}_h=(T^{n,2}_h)^{d-1}$. Keich proved that $\sup_{h\in [0,1]}|T_h^{n,2}|< \frac{1}{n}$, so we deduce 
\[|T^{n,d}|=\int_0^1 |T^{n,d}_h|\;dh=\int_0^1 |T^{n,2}_h|^{d-1}\;dh\leq \frac{1}{n^{d-1}}.\]
This provides an alternative proof of the estimate in Theorem~\ref{theo: perron tree} in the particular case of the tree whose base is the dyadic decomposition of the cube.
\end{remark}
\begin{figure}[htb]
    \centering
    \begin{subfigure}[b]{0.45\textwidth}
        \centering
        \includegraphics[width=\textwidth]{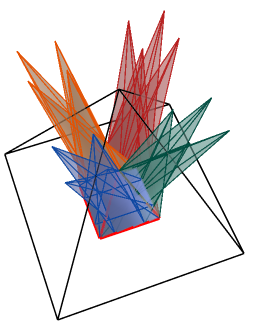}
        \caption{$n=2$}
    \end{subfigure}
    \hfill
    \begin{subfigure}[b]{0.45\textwidth}
        \centering
        \includegraphics[width=\textwidth]{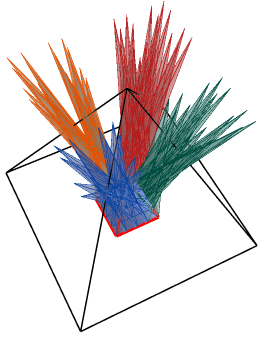}
        \caption{$n=3$}
    \end{subfigure}

    \vspace{0.5cm} 

    \begin{subfigure}[b]{0.45\textwidth}
        \centering
        \includegraphics[width=\textwidth]{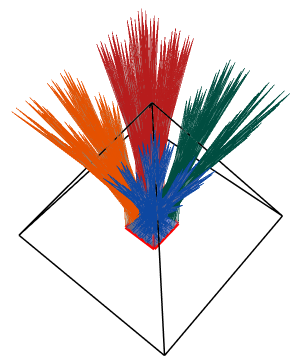}
        \caption{$n=4$}
    \end{subfigure}
    \hfill
    \begin{subfigure}[b]{0.45\textwidth}
        \centering
        \includegraphics[width=1.1\textwidth]{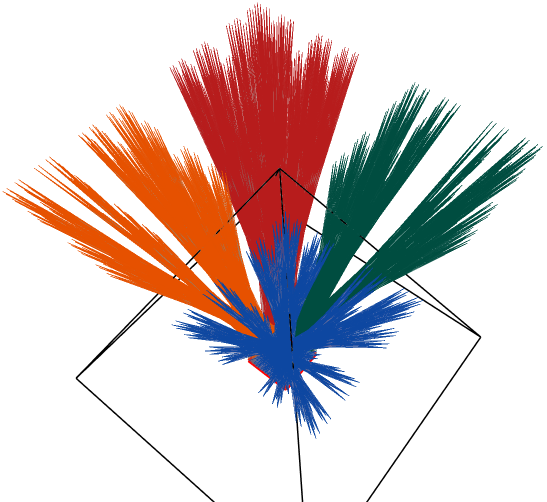}
        \caption{$n=5$}
    \end{subfigure}
    
    \caption{$n$-th iterated Perron tree construction for $d=3$, $\Sigma$ the unit cube, and different values of $n$.}
    \label{fig: Perrons cubes}
\end{figure}
\begin{remark}\label{remark:n1-doptimal}
The $O (n^{1-d})$ asymptotic obtained is the best one can do if one employs equation \eqref{eq: volume after construction} to bound the our Perron tree construction volume  above. Indeed, by Hölder's inequality,
\begin{align*} 1 &= \bigg(\sum_{i=1}^n P_{i-1} - P_i\bigg) + P_n
 \leq \bigg( P_n^d + \sum_{i=1}^n (P_{i-1} - P_i)^d \bigg)^{\frac 1 d} (n+1)^{\frac{d-1}{d}},
\end{align*}
which implies that the right hand side of equation \eqref{eq: volume after construction} is at least
\[ \gtrsim
\frac{|T|}{(n+1)^{d-1}}.\]
\end{remark}

\section{Applications to Kakeya-type problems} \label{sec: applications}
\subsection{Volumes of higher dimensional intersecting tubes}\label{subsection:tubes}
We now prove Theorem~\ref{thm:main_tubes}. 

For every $\delta\in (0,1)$, we define the number $f_d^\alpha(\delta)$ as the infimum of the real numbers $\varepsilon>0$ such that there exists a finite family of $\delta$-tubes $\{R_j\}$ such that the $\translate{R_j}{\alpha}$ are pairwise disjoint but
    \begin{equation}\label{eq: f delta}
        \frac{\big|\bigcup_j R_j\big|}{\sum_j|R_j|}\leq \varepsilon.
    \end{equation}
    
Keich \cite{RefWorks:keich1999lp} proves that there is a constant $C>0$ such that 
\begin{equation*}
    \frac{C^{-1}}{\log 2/\delta}\leq f_2^1(\delta)\leq \frac{C}{\log 2/\delta}.
\end{equation*}
 Theorem~\ref{thm:main_tubes} is equivalent to the inequality 
 \begin{equation}\label{eq:main_tubes} f_d^\alpha(\delta) \leq \frac{C_{d,\alpha}}{|\log \delta|^{d-1}}.\end{equation}
 This is where our Perron tree construction becomes useful, and where we need fair partitions.

\begin{proof}[Proof of Theorem~\ref{thm:main_tubes}]
As explained before, we have to prove \eqref{eq:main_tubes}.

Let $\Sigma$ be a $(d-1)$-polytope $\Sigma \subset \R^d$ with a fair partition $(\Sigma_i)_{i \in I}$ with $|I|>1$ satisfying \eqref{eq:assumption_shrinking}, and take $\mathbf{v}$ such that $d(\mathbf{v},\aff(\Sigma))>2$. We also require that the family $(\Sigma_i,\mathbf{x}_i)_{i \in I}$ is affinely separated. The proof works for any polytopal partition satisfying these conditions, but for simplicity let us write it for the dyadic subdivisions of the unit cube.

Given an arbitrary $n\in \N$, let $(\Sigma_i)_{i \in [2^{n(d-1)}]}$ be the $n$-iteration dyadic decomposition and $(\mathbf{w}_i)_{i \in [2^{n(d-1)}]}$ be the vectors given by Theorem~\ref{theo: perron tree}. By Lemma~\ref{lemma: tubes inside perron} each cube-based pyramid $T_i$ contains a $\delta_n$-tube $R_i$ whose translate $\translate{R_i}{\alpha}$ in contained in the interior of $\ocone{\Sigma_i}{\mathbf{v}}$, for
\[ \delta_n = C_{\alpha,d} [2^{n(d-1)}]^{-1/(d-1)} = C_{\alpha,d} 2^{-n}.\] Then the $\delta_n$-tubes $R_i + \mathbf{w}_i$ have the property that their translates $\translate{R_i}{\alpha}$ are pairwise disjoint, because they are contained in the interiors of the reflected cones $\ocone{\Sigma_i}{\mathbf{v}}+\mathbf{w}_i$, which are pairwise disjoint by Theorem~\ref{theo: perron tree}. 
As a consequence, we have that 
\begin{equation*}
    f^\alpha_d(\delta_n)\leq \frac{\bigg|\bigcup_{i\in [2^{d-1}]^n}\big(R_i+\mathbf{w}_i\big)\bigg|}{\sum_{i\in [2^{d-1}]^n}|R_i+\mathbf{w}_i|}\leq \frac{\bigg|\bigcup_{i\in [2^{d-1}]^n}\big(\operatorname{conv}(\Sigma_i,\mathbf{v})+\mathbf{w}_i\big)\bigg|}{\sum_{i\in [2^{d-1}]^n}C_{d,\alpha}'2^{-n(d-1)}}\lesssim_{\alpha,d} \frac{n^{1-d}}{2^{n(d-1)}2^{-n(d-1)}}=n^{1-d}.
\end{equation*}

Now, for every $\delta\in (0,C_{\alpha,d}]$, there exists a unique $n_0\in \N$ such that $\delta_{n_0+1}\leq \delta< \delta_{n_0}$. Since $f^\alpha_d(\delta)$ satisfies the identity:
\begin{equation*}
    f_d^\alpha(\delta)\leq \bigg(\frac{\delta'}{\delta}\bigg)^{d-1}f_d^\alpha(\delta')\qquad 0<\delta\leq \delta',
\end{equation*}
we then have
\begin{equation*}
    f_d^\alpha(\delta)\leq \bigg(\frac{\delta_{n_0}}{\delta_{n_0+1}}\bigg)^{d-1}f_d^\alpha(\delta_{n_0})\lesssim_{\alpha,d}n_0^{1-d}\lesssim_{\alpha,d}|\log\delta|^{1-d},
\end{equation*}
where in the last line we have used that $|\log\delta|\lesssim_{\alpha,d} n_0$.
\end{proof}

\begin{figure}[H]
  \centering
  \makebox[0pt][c]{\hspace{9cm}\begin{minipage}{0.3\textwidth}
    \includegraphics[width=\textwidth]{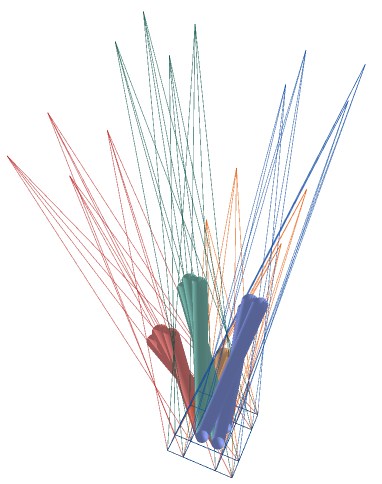}
  \end{minipage}}
  \hfill
  \makebox[0pt][c]{\hspace{-9cm}\begin{minipage}{0.4\textwidth}
    \includegraphics[width=\textwidth]{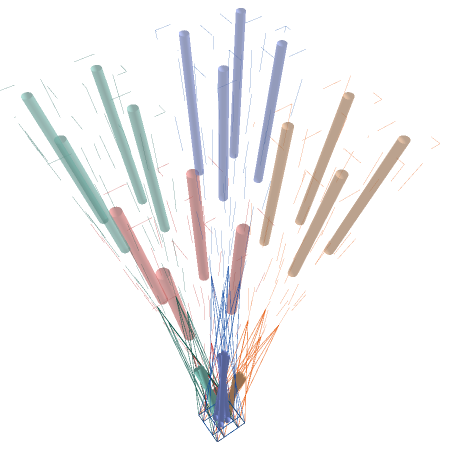}
  \end{minipage}}
  \caption{Illustration of the proof of Theorem \ref{thm:main_tubes} for $d=3$. The first image shows the tubes translated as a block within their respective pyramid during the 2-iterations Perron tree construction of the cube. The second image includes the reflected pyramids across their respective apex, which by construction contain the $\translate{R_i}{\alpha}$'s.}
\end{figure}
\begin{remark}
The first author's interest in obtaining bounds for the functions $f_d^\alpha$ is what motivated the search for a generalisation of the 2 dimensional Perron tree construction. Thus, in order to be able to insert congruent $\delta$-tubes in each $d$-dimensional cell of $\operatorname{conv}(\Sigma,\mathbf{v})$,  the hypothesis of a fair partition of $\Sigma$ was crucial. For example, it is clear that Lemma \ref{lemma: tubes inside perron} is false for partitions of $\Sigma$ in which the cells do not have a bounded aspect ratio, i.e. the barycentric subdivision of a simplex. 
\end{remark}

\subsection{Higher dimensional Kakeya sets}
\begin{definition}[Kakeya set]
A Kakeya set in $E\subset \R^d$ is a compact set which contains a
unit length line segment in every direction of $\mathbb S^{d-1}\subset \R^d$. That is, given $\mathbf{e}\in \mathbb S^{d-1}$, there exists an $\mathbf{x}\in \R^d$ such that
\begin{equation*}
    \{\mathbf{x}+s\mathbf{e}\mid s\in [-1/2,1/2]\}\subset E.
\end{equation*}
\end{definition}
\begin{lemma}\label{lemma: lemma kakeya}For every $\delta\in (0,1)$, there exists a Kakeya set $G^\delta$ such that $|\overline{\mathcal{N}_{2\delta}(G^\delta)}|\lesssim_d|\log\delta|^{1-d}$, where $\mathcal{N}_\delta(X)$ denotes the $\delta$-neighbourhood of $X\subset \R^d$.
\end{lemma}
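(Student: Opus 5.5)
We will build $G^\delta$ as a finite union of Perron trees from Theorem~\ref{theo: perron tree}, one for each of finitely many pyramids whose opposite cones cover all directions. Concretely, take the cube $\Sigma=[0,1]^{d-1}\times\{0\}$ with its dyadic subdivision (Example~\ref{ex:cubes}) and apex $\mathbf{v}$ above the center at a fixed height $h$. The segments $[\mathbf{x},\mathbf{v}]$, $\mathbf{x}\in\Sigma$, cover an open cone of directions $D\subset\mathbb S^{d-1}$. By rotating by finitely many elements $g_1,\dots,g_K\in O(d)$, with $K=K_d$, the rotated cones $g_kD\cup(-g_kD)$ cover $\mathbb S^{d-1}$. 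We rescale so that every segment $[\mathbf{x},\mathbf{v}]$ has length at least $1$. Since translating pieces preserves directions, each translated pyramid $T_i+\mathbf{w}_i$ still contains a segment of length at least $1$ in every direction of $\conv(\Sigma_i^{(n)},\mathbf{v})$. Because the $\Sigma_i^{(n)}$ cover $\Sigma$, the union $P_n=\bigcup_i(T_i+\mathbf{w}_i)$ contains a unit segment in every direction of $D$. So $G^\delta=\bigcup_k g_kP_n$ is a compact Kakeya set.

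Next, we choose $n$ as a function of $\delta$ and control the neighbourhood. Take $n$ maximal with $2^{-n}\geq\delta$, so $n\sim|\log\delta|$. By Theorem~\ref{theo: perron tree}, $|P_n|\lesssim_d n^{1-d}|T|$. The key point is that thickening by $2\delta\lesssim N^{-1/(d-1)}$, with $N=2^{n(d-1)}$, does not change volumes by more than a constant factor. By Lemma~\ref{lemma: balls inside center of pyramid}, the $2N^{-1/(d-1)}$-neighbourhood of each $T_i$ is contained in a translate of $C\,T_i$ (dilation about a suitable point). Hence
\[
\mathcal N_{2\delta}(T_i+\mathbf{w}_i)\subset C\,(T_i+\mathbf{w}_i)+\mathbf{a}_i
\]
for some vectors $\mathbf{a}_i$.

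We then need $\big|\bigcup_i (C T_i+\mathbf{w}_i+\mathbf{a}_i)\big|\lesssim|\bigcup_i (T_i+\mathbf{w}_i)|$, or at least $\lesssim n^{1-d}$. This is the main obstacle, since dilating each piece separately could destroy the overlaps. I would handle it by not dilating the pieces and instead rerunning the whole estimate of Proposition~\ref{prop: perron tree}. Its proof only uses the shrinking property \eqref{eq:assumption_shrinking}, translations, and homotheties. So I would redo the inductive bound (i) for the enlarged family $\mathcal N_{2\delta}(T_i)+\mathbf{w}_i$, observing three things:
\begin{itemize}
\item the cores $\mathcal H_{\mathbf o}^{P_m}(T)$ thicken only by $O(\delta)$;
\item the peaks $\mathcal H_{\mathbf v}^{t/(1-c)}(T_i)$ at the last levels have size comparable to $2^{-n}\gtrsim\delta$, so their $2\delta$-neighbourhoods have volume $\lesssim$ that of the peak, by Lemma~\ref{lemma: balls inside center of pyramid} applied at each scale;
\item the errors summed over all $n$ levels add at most $\sum_m 2^{-m}\cdot(\text{area})\cdot\delta 2^{m}$-type terms.
\end{itemize}
A cruder alternative, which I expect suffices, is to bound $|\mathcal N_{2\delta}(P_n)|\le|P_n|+2\delta\cdot\mathcal H^{d-1}(\partial P_n)$-type estimates. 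These must be avoided, since the surface area grows like $2^{n}$. So the level-by-level argument is the one I would commit to.

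Finally, the neighbourhood of the whole set is handled by
\[
|\overline{\mathcal N_{2\delta}(G^\delta)}|\le K\,\big|\overline{\mathcal N_{2\delta}(P_n)}\big|\lesssim_d n^{1-d}\lesssim_d|\log\delta|^{1-d}.
\]
For $\delta$ bounded away from $0$, the trivial bound suffices after adjusting constants.
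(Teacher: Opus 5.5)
There is a genuine gap, at exactly the step you flag as the main obstacle: the bound $|\mathcal N_{2\delta}(P_n)|\lesssim n^{1-d}$ is never established, and the route you commit to fails as sketched.

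Rerunning the induction of Proposition~\ref{prop: perron tree}(i) for thickened pieces still controls the part outside the cores by a union bound over the peaks. Your second bullet is wrong at the innermost level. Those peaks are translates of $\mathcal H_{\mathbf v}^{t_1/(1-c)}(T_i)$ with $t_1\sim 1/n$, i.e.\ pyramids of height $\sim 1/n$ and cross-section $\lesssim 2^{-n}/n\ll\delta$. So Lemma~\ref{lemma: balls inside center of pyramid} does not control their $2\delta$-neighbourhoods. Each such neighbourhood contains a $2\delta$-tube of length $\sim 1/n$, hence has volume $\gtrsim n^{-1}\delta^{d-1}$. Summing over the $2^{n(d-1)}$ finest pieces gives $\gtrsim 1/n$, far larger than $n^{1-d}$ when $d\geq 3$. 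Similarly, the innermost cores shrink only by $P_1=1-t_1\approx 1$ and have width $\sim 2^{-n}$, so thickening them by $2\delta$ is not a lower-order perturbation. To make this route work you would have to exploit the overlaps of the thickened tips near the apices, which the induction does not track. Your three bullets do not do this.

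The missing idea is that you are free to choose $G^\delta$: there is no need to thicken $P_n$ itself. Dilate the whole tree about a single point by the constant $C$ of Lemma~\ref{lemma: balls inside center of pyramid}. Then $CP_n=\bigcup_i C(T_i+\mathbf w_i)$ has volume $C^d|P_n|\lesssim n^{1-d}$. No overlap is destroyed, since every piece is dilated about the same centre. Each $C(T_i+\mathbf w_i)=CT_i+C\mathbf w_i$ contains some translate $\mathcal N_{2\cdot 2^{-n}}(T_i)+\mathbf w_i'$. So the re-translated set $G=\bigcup_i(T_i+\mathbf w_i')$ satisfies $\mathcal N_{2\cdot 2^{-n}}(G)\subset CP_n$. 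It still contains a unit segment in every direction of $D$, because only the translation of each piece has changed. In other words, the inclusion you wrote down from Lemma~\ref{lemma: balls inside center of pyramid} should be used to move the pieces, not to thicken them in place. This is exactly the paper's proof. The remaining parts of your outline (finitely many rotations, reduction to dyadic $\delta$, choice of $n$) agree with it.
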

\begin{proof}
Note that it suffices to prove the lemma for a set $G^\delta$ containing an open set of directions $S$ in $\mathbf{S}^{d-1}$ (the same for all $\delta$), rather than a Kakeya set. Indeed, there is a finite number of rotated copies of $G^\delta$ that will contain every direction in $\mathbb{S}^{d-1}$, and their union verifies the lemma.

It also suffices to prove the lemma with $\delta \in \{ 2^{-n} \mid n \in \N\}$: for a general $\delta$ the lemma follows from the case of $2^{-n}$ for $n$ such that $2^{-n-1} \leq \delta \leq 2^{-n}$.

Set $\Sigma = [0,1]^{d-1}$, seen in $\R^{d-1}\times\{0\} \subset \R^{d}$ and $\mathbf{v}=(\mathbf{0},1)$. Let $C$ be the constant given by Lemma \ref{lemma: balls inside center of pyramid}. We will prove the above claim for
\[ S = \Big\{ \frac{\mathbf{v} - \mathbf{x}}{\|\mathbf{v} - \mathbf{x}\|} \mid \mathbf{x} \in \Sigma\Big\}.\]

We perform the Perron tree construction for $n$ steps, and scale it by $C$. We obtain a fair subdivision $\big(\Sigma_i\big)_{i\in [2^{d-1}]^n}$ of $\Sigma$ and vectors $\big(\mathbf{w}_i\big)_{i\in [2^{d-1}]^n}$ such that
\[ \bigcup_{i \in  [2^{d-1}]^n} \big(C \conv(\Sigma_i,\mathbf{v}) + C\mathbf{w}_i\big)\]
has measure $O(n^{-(d-1)})$. But by Lemma~\ref{lemma: balls inside center of pyramid}, each of the pyramids $C \conv(\Sigma_i,\mathbf{v}) + C\mathbf{w}_i$ contains a translate (say by the vector $\mathbf{w}'_i$) of the $2\cdot 2^{-n}$-neighbourhood of the pyramid $\conv(\Sigma_i,\mathbf{v})$. So if we set
\[ G^{2^{-n}} = \bigcup_{i \in  [2^{d-1}]^n} \big(\conv(\Sigma_i,\mathbf{v}) + \mathbf{w}'_i\big)\]
we see that $G^{2^{-n}}$ contains a unit segment parallel to $\mathbf v - \mathbf x$ for every $\mathbf{x} \in \bigcup_i \Sigma_i=\Sigma$, that is a unit segment parallel to every element of $S$. And its $2\cdot 2^{-n}$-neighbourhood has measure $O(n^{-(d-1)}) = O((|\log 2^{-n}|)^{-(d-1)})$.
\end{proof}

\begin{proposition}\label{prop: prop kakeya} Let $f:(0,1)\rightarrow (0,\infty)$ non-decreasing and consider the following two statements:
\begin{itemize}
    \item[$(1)_f$] $\forall \delta\in (0,1)$ there exists a Kakeya set $G^\delta$ such that $|\overline{\mathcal{N}_{2\delta}(G^\delta)}|\leq f(\delta)$.
    \item[$(2)_f$] There exists a Kakeya set $E$ such that $\forall \delta\in (0,1)$ $|\mathcal{N}_\delta(E)|\leq f(\delta)$.
\end{itemize}
There is a dimension dependent constant $C_d\geq 1$ such that $(1)_f$ implies $(2)_{\widetilde{f}}$ for $\widetilde{f}(\delta)=C_d f(C_d\sqrt[4]{\delta})$ and $\delta\in (0,C_d^{-4})$.
\end{proposition}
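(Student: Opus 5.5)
The plan is the classical Besicovitch-type gluing argument: build $E$ as an infinite union of rescaled copies of the sets $G^{\delta_k}$ along a rapidly decreasing sequence $\delta_k$. The scales $\delta_k$ must be chosen so that, at a given resolution $\delta$, one summand controls the bound and all the others are negligible.

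\textbf{Construction.} Fix a sequence $\delta_k\to 0$, say $\delta_k = 2^{-4^k}$, so that $\delta_{k+1}=\delta_k^{4}$. Each $G^{\delta_k}$ is a compact Kakeya set, hence contains unit segments in all directions. Without loss of generality it lies in a ball of radius $C_d$; we may translate each piece so the construction stays bounded, or intersect with a fixed ball. We cannot simply take $\bigcup_k G^{\delta_k}$, because the pieces with large $\delta_k$ are fat at fine scales. Instead we use the following trick. Let $K_k=\overline{\mathcal N_{2\delta_k}(G^{\delta_k})}$. These are compact Kakeya sets, but they are not nested, so we would rather produce nested compact Kakeya sets $E_1\supset E_2\supset\cdots$ with $E_k\subset \mathcal N_{\delta_k}(\text{something of measure}\le f(\delta_k))$, and then set $E=\bigcap_k E_k$. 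To obtain nestedness, observe that $\mathcal N_{2\delta_k}(G^{\delta_k})$ contains, for each direction, a whole $2\delta_k$-tube around a unit segment. Inside each such tube we place a scaled copy (by factor $\sim \delta_k$ in the transverse directions, i.e.\ a small copy of $G^{\delta_{k+1}}$ of diameter $\delta_k$ handling a small cap of directions) containing segments in all nearby directions. Concretely, we cover $\mathbb S^{d-1}$ by caps of radius $\sim\delta_k$. For each cap, a segment of length $1$ in its central direction, fattened to width $2\delta_k$, contains unit segments in all directions of the cap, since the angular deviation $\delta_k$ times length $1$ fits in the width. The next-level set is then required to contain, for each direction, a unit segment lying in $E_k$. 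Compactness of $\bigcap E_k$ then gives a segment in every direction by a limiting argument: the segments in $E_k$ have a convergent subsequence whose limit lies in every $E_m$.

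\textbf{Measure estimate.} Given $\delta$, pick $k$ with $\delta_{k+1}\le\delta<\delta_k$, i.e.\ $\delta_k\le \delta^{1/4}$. Then $E\subset E_{k}$ and $\mathcal N_\delta(E)\subset\mathcal N_{\delta_k}(E_k)$, which has measure $\lesssim f(C\delta_k)\le f(C\delta^{1/4})$ by monotonicity of $f$. This explains the fourth root and the constant $C_d$. Constants appear when covering directions by finitely many rotated copies and when fattening.

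\textbf{Main obstacle.} The hardest step is the nesting: making $E_{k+1}\subset E_k$ while retaining $|\mathcal N_{\delta_{k+1}}(E_{k+1})|\lesssim f(\delta_{k+1})$. A simpler route I would try first avoids nesting altogether. Choose the $\delta_k$ so sparse that $\sum_{j>k} f(\delta_j)$-type tails are irrelevant, and define $E=\bigcup_k \mathcal H^{r_k}(G^{\delta_k})$ with scaling factors $r_k$ so small that each small copy has total neighbourhood measure $\le r_k^{d-1}$, plus one unscaled $G^{\delta_k}$ for the current scale. The union of scaled copies fails to be Kakeya unless one copy is unscaled, however. So I expect the nested-intersection argument, with the compactness limit for segments, to be the necessary route. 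The quantitative loss $\delta\mapsto\delta^{1/4}$ absorbs the fact that each $E_{k+1}$ is $\delta_k$-close to $E_k$ only after fattening by $2\delta_k$.
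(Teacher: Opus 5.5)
Your overall architecture (nested neighbourhoods, a covering of $\mathbb S^{d-1}$ by caps, transversely compressed Kakeya-type sets placed along unit segments of the previous stage, and compactness at the end) matches the paper's. However, the step you flag as the ``main obstacle'' is the entire content of the proposition, and you leave it unproved.

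\textbf{The missing localisation lemma.} What you need is a localised form of $(1)_f$. Given a unit segment $I$ with direction $\mathbf u$ and $0<\delta\le\varepsilon\le\frac12$, you need a set $F$ such that:
\begin{itemize}
\item $F$ lies in the $\sqrt{d-1}\,\varepsilon$-neighbourhood of $I$;
\item $F$ contains a unit segment in every direction within $\varepsilon/2$ of $\mathbf u$;
\item $|\overline{\mathcal N_{2\delta}(F)}|\lesssim_d\varepsilon^{d-1}f(\delta/\varepsilon)$.
\end{itemize}
Your shortcuts for this do not work. The hypothesis $(1)_f$ gives no control on the diameter of $G^\delta$, and intersecting with a ball destroys the Kakeya property. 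A copy ``of diameter $\delta_k$'' contains no unit segments. A transversely compressed copy of $G^{\delta_{k+1}}$ is also not controlled by $(1)_f$: its $\delta_{k+1}$-neighbourhood corresponds to the $\delta_{k+1}/\delta_k$-neighbourhood of $G^{\delta_{k+1}}$, so you must use $G^{\delta_{k+1}/\varepsilon}$ instead.

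The paper builds $F$ in two moves.
\begin{itemize}
\item \emph{Folding.} Cut $c_0G^{\delta/c_0}$ (with $c_0=3\sqrt5/4$) along the cubes $\frac{\mathbf k}{2}+[-\frac12,\frac12]^d$, $\mathbf k\in\Z^d$, and translate every piece back into $[-\frac12,\frac12]^d$. Every segment of length $c_0$ with direction in $S_{1/2}$ crosses one of these cubes from $\{x_1=-\frac12\}$ to $\{x_1=\frac12\}$. The $1$-neighbourhoods of the cubes overlap at most $7^d$ times, so the folded set $H^\delta$ has $|\overline{\mathcal N_\delta(H^\delta)}|\lesssim_d f(\delta)$.
\item \emph{Compressing.} Apply $T(x_1,\mathbf x')=(x_1,2\varepsilon\mathbf x')$. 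It maps the directions of $S_{1/2}$ onto the cap $S_\varepsilon$, has Jacobian $(2\varepsilon)^{d-1}$, and satisfies $\mathcal N_{2\delta}(TH)\subset T(\mathcal N_{\delta/\varepsilon}(H))$. This is the origin of the bound $\varepsilon^{d-1}f(\delta/\varepsilon)$.
\end{itemize}
Summing over a maximal $\varepsilon/2$-separated set of $\lesssim_d\varepsilon^{-(d-1)}$ directions then gives total measure $\lesssim_d f(\delta/\varepsilon)$.

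\textbf{The exponent bookkeeping.} The nesting $\mathcal N_{\delta_k}(F_k)\subset\mathcal N_{\delta_{k-1}}(F_{k-1})$ forces the caps at stage $k$ to have radius $\varepsilon_k\lesssim\delta_{k-1}$. So the construction yields $|\overline{\mathcal N_{2\delta_k}(F_k)}|\lesssim_d f(C\delta_k/\delta_{k-1})$, not the $f(C\delta_k)$ you assert: the transverse rescaling costs resolution.
\begin{itemize}
\item With your choice $\delta_{k+1}=\delta_k^4$, this is $f(C\delta_k^{3/4})$. After interpolating $\delta_{k+1}\le\delta<\delta_k$ you only get $f(C\delta^{3/16})$, which is weaker than the claimed $f(C\delta^{1/4})$.
\item The fourth root is really two square roots. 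The paper takes $\delta_k=\delta_{k-1}^2=2^{-2^k}$ and $\varepsilon_k=(\delta_{k-1}-\delta_k)/\sqrt{d-1}$. Then $\delta_k/\varepsilon_k\le 2\sqrt{d-1}\,\delta_{k-1}=2\sqrt{d-1}\sqrt{\delta_k}$, which is the first square root. The interpolation between consecutive scales costs the second.
\end{itemize}
Finally, rather than demanding $F_{k+1}\subset F_k$, set $E=\bigcap_k\overline{\mathcal N_{\delta_k}(F_k)}$. This is a nested intersection of compact Kakeya sets by construction, and $\mathcal N_\delta(E)\subset\overline{\mathcal N_{2\delta_k}(F_k)}$ for $\delta\le\delta_k$.
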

The proof follows the classical patching argument in dimension $2$, see \cite{RefWorks:wolff2003recent,RefWorks:keich1999lp}. We state a useful lemma in the proof.
\begin{lemma}\label{lem:local_kakeya} Assume $(1)_f$. For every unit segment $I\subset \R^d$ with direction $\mathbf{u}$, for every $0<\delta \leq \varepsilon \leq \frac 1 2$, there is a subset $F^{I,\varepsilon,\delta}$ of $\R^d$ such that:
\begin{enumerate}
    \item $F^{I,\varepsilon,\delta}$ contains a unit segment of direction $\mathbf{v}\in \mathbf{S}^{d-1}$ for all $||\mathbf{v}-\mathbf{u}||\leq \varepsilon/2$.
    \item $F^{I,\varepsilon,\delta}$ is contained in the $\sqrt{d-1} \varepsilon$-neighbourhood of $I$.
    \item $ |\overline{\mathcal{N}_{2\delta}(F^{I,\varepsilon,\delta})}|\lesssim_d \varepsilon^{d-1}f(\delta/\varepsilon)$.
\end{enumerate} 
\end{lemma}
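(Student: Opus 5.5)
The plan is to take the Kakeya set $G^{\delta/\varepsilon}$ from $(1)_f$ at scale $\delta/\varepsilon$, keep only the part of it that realizes directions near $\mathbf{u}$, and then squeeze it anisotropically so that it hugs $I$. Without loss of generality, after a rotation and translation, I take $\mathbf{u}=\mathbf{e}_d$ and $I=\{0\}\times[-1/2,1/2]$. Apply $(1)_f$ with parameter $\delta' = \delta/\varepsilon\in(0,1]$ (if $\delta=\varepsilon$, use a limiting value or simply $\delta'$ slightly less than $1$, since $f$ is non-decreasing). For each direction $\mathbf{e}$ with $\|\mathbf{e}-\mathbf{e}_d\|\le 1/2$, choose a unit segment $\{\mathbf{x}_\mathbf{e}+s\mathbf{e}\}\subset G^{\delta'}$. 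Let $G'$ be the union of these segments, translated so that each one is centered at the origin in its horizontal coordinates. Rather than translating, I will work with the union of the chosen segments themselves. The key point is that $\overline{\mathcal N_{2\delta'}(G')}\subset\overline{\mathcal N_{2\delta'}(G^{\delta'})}$, so this set has measure at most $f(\delta')$.

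Next, apply the linear map $L(\mathbf{y},y_d)=(\varepsilon\mathbf{y},y_d)$. A segment of direction $\mathbf{e}=(\mathbf{a},a_d)$ maps to a segment of direction proportional to $(\varepsilon\mathbf{a},a_d)$. Since $a_d\ge 1/2$ and the map $\mathbf{e}\mapsto (\varepsilon\mathbf a,a_d)/\|\cdot\|$ is onto a neighbourhood of $\mathbf{e}_d$ of radius $\gtrsim\varepsilon$, the images cover all directions within $\varepsilon/2$ of $\mathbf{e}_d$. I will check this with the crude bound: direction $\mathbf{v}$ with $\|\mathbf{v}-\mathbf{e}_d\|\le\varepsilon/2$ is hit by $\mathbf{a}\propto \mathbf{v}'/\varepsilon$ with $|\mathbf{a}|\lesssim 1/2$. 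The images have length $\ge 1$ (the vertical component is preserved and the horizontal part shrinks by $\varepsilon\le1$, so the length is at least $a_d/\|\mathbf e\|\cdot\ldots$). This is the fiddly point: lengths may drop below $1$. To fix it, I will apply $(1)_f$ to segments of length $2$, either by rescaling $G^{\delta'}$ by $2$, which costs a constant factor $2^d$ and changes $\delta'$ to $2\delta'$, harmless up to constants via monotonicity of $f$ after adjusting, or by simply noting that the vertical extent $a_d\ge 1/2$ stretches back. Rescaling by a constant is the clean route.

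For containment (2), I first translate each segment horizontally so that it meets the axis $\{0\}\times\R$. Translating individual segments separately could destroy the measure bound, so instead I will restrict to segments whose horizontal positions $\mathbf{x}_\mathbf{e}$ lie in a bounded region. Since $G^{\delta'}$ can be assumed to lie in a ball of radius $O(1)$ (discard far-away pieces, or note that the construction in Lemma~\ref{lemma: lemma kakeya} is bounded), after applying $L$ the horizontal extent is $O(\varepsilon)$ and a single global translation places the set in the $\sqrt{d-1}\,\varepsilon$-neighbourhood of $I$. I also truncate vertically so that the set matches $I$ in length; this can require the segments to be centered at comparable heights, which holds for a bounded set after a global vertical shift, up to constants absorbed by taking slightly shorter segments. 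The constant $\sqrt{d-1}$ corresponds to fitting in a cube $[-\varepsilon/2,\varepsilon/2]^{d-1}$, which one can arrange by choosing the source region as a cube of side $1$.

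For the measure bound (3), note that $L$ contracts distances, so $\mathcal N_{2\delta}(L(G'))\subset L(\mathcal N_{2\delta/\varepsilon}(G'))$: a point at distance $<2\delta$ from $L(g)$ has preimage at distance $<2\delta/\varepsilon$ from $g$, because $L^{-1}$ has norm $1/\varepsilon$. Hence $|\overline{\mathcal N_{2\delta}(F)}|\le \det L\cdot f(\delta/\varepsilon)=\varepsilon^{d-1}f(\delta/\varepsilon)$, up to the constants from the rescaling. I expect the main obstacle to be bookkeeping: guaranteeing unit length and exact containment in the $\sqrt{d-1}\varepsilon$-neighbourhood simultaneously with the angular range $\varepsilon/2$. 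I would handle this by rescaling $G^{\delta'}$ by a dimensional constant and restricting to a bounded portion of it.
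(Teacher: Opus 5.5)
There is a genuine gap in property (2), and it is the crux of the lemma. The hypothesis $(1)_f$ gives no bound on the diameter of $G^{\delta'}$ and no information on where its segments lie. The unit segments realizing the directions close to $\mathbf{u}$ may sit at mutually far-apart places.

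\begin{itemize}
\item Discarding far-away pieces discards directions.
\item Invoking the boundedness of the particular sets of Lemma~\ref{lemma: lemma kakeya} changes the hypothesis: Proposition~\ref{prop: prop kakeya} applies this lemma for an arbitrary non-decreasing $f$.
\item Even granting $G^{\delta'}\subset B(0,D)$, your map $L$ does not touch the coordinate along $\mathbf{u}$. Segments whose positions along $\mathbf{u}$ differ by up to $D$ therefore cannot all be put, by one global translation, inside the $\sqrt{d-1}\,\varepsilon$-neighbourhood of $I$, whose extent along $\mathbf{u}$ is about $1$. Truncating to such a window does not make the segments ``slightly shorter''; it may remove them altogether.
\item Likewise the transverse extent of $L(G')$ is about $\varepsilon D$, and you cannot ``choose the source region as a cube of side $1$'' without cutting the set.
\end{itemize}
No single affine image of (a subset of) $G^{\delta'}$ does the job.

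The missing idea is a cut-and-fold step, which is what the paper does. Different parts of the Kakeya set are translated by different vectors, at the cost of only a dimensional constant in measure. Cover $\R^d$ by the cubes $Q_{\mathbf{k}}=\mathbf{k}/2+[-1/2,1/2]^d$, $\mathbf{k}\in\Z^d$, whose $1$-neighbourhoods have overlap bounded by some $N_d$. Set $H=\bigcup_{\mathbf{k}}\big((Q_{\mathbf{k}}\cap c_0G^{\delta/c_0})-\mathbf{k}/2\big)\subset[-1/2,1/2]^d$ with $c_0=3\sqrt5/4$.

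Write $x_1$ for the coordinate along $\mathbf{u}$, and consider a segment of length $c_0$ whose transverse component is at most half its $x_1$-component. Such a segment has a piece lying in some $Q_{\mathbf{k}}$ and joining the two faces of $Q_{\mathbf{k}}$ orthogonal to $\mathbf{u}$. Hence $H$ contains a face-to-face segment of $[-1/2,1/2]^d$ in each such direction. Bounded overlap and monotonicity of $f$ give $|\overline{\mathcal{N}_\delta(H)}|\le N_dc_0^df(\delta)$.

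Only then squeeze, applying $(x_1,\mathbf{x}')\mapsto(x_1,2\varepsilon\mathbf{x}')$ to $H$ built at scale $\delta/\varepsilon$.
\begin{itemize}
\item The image lies in $[-1/2,1/2]\times[-\varepsilon,\varepsilon]^{d-1}$, which gives (2).
\item The admissible directions are carried onto a cone containing every unit $\mathbf{v}$ with $\|\mathbf{v}-\mathbf{u}\|\le\varepsilon/2$.
\item Face-to-face segments keep length at least $1$. This removes the length problem you flagged, with no extra rescaling.
\end{itemize}
Your measure estimate for the squeeze via $\|L^{-1}\|$ and $\det L$ is correct, and it is exactly what is used at this last step.
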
 
\begin{proof}
  Without loss of generality, by applying a rigid motion, we can assume that the unit segment $I$ is $[-\mathbf{e}_1/2,\mathbf{e}_1/2]$ and its direction is $\mathbf{u}=\mathbf{e}_1$.

  Let $S_\varepsilon = \{\mathbf{v} \in \mathbf{S}^{d-1} \mid \langle \mathbf{v},\mathbf{e}_1\rangle \geq \frac{1}{\sqrt{1+\varepsilon^2}}\}$. Observe that, for a unit vector $\mathbf{v}$ with decomposition $\mathbf{v} = v_1 \mathbf{e}_1+ \mathbf{v}'$ and $\mathbf{v}'$ orthogonal to $\mathbf{e}_1$,
  \begin{equation}\label{eq:characterization_Sepsilon} \mathbf{v} \in S_\varepsilon \iff \varepsilon v_1 \geq \|\mathbf{v'}\|.
  \end{equation}
  We claim that for every $0<\delta<1$, there is a subset $H^\delta$ of the unit square $[-1/2,1/2]^d$ such that:
  \begin{itemize}
  \item for every $\mathbf v \in S_{1/2}$, $H^\delta$ contains a segment parallel to $\mathbf v$ joining the side $\{x_1=-1/2\}$ to the side $\{x_1=1/2\}$,
    \item $|\overline{\mathcal{N}_{\delta}(H^{\delta})}|\lesssim_d f(\delta)$.
  \end{itemize}
  Taking this claim for granted, the lemma will follow by defining $F^{I,\varepsilon,\delta}$ as the image of $H^{\delta/\varepsilon}$ by the map $T:(x_1,\mathbf{x}')\mapsto (x_1, 2 \varepsilon \mathbf{x}')$. Then $F^{I,\varepsilon,\delta}$ is contained in $[-1/2,1/2]\times [-\varepsilon,\varepsilon]^{d-1}$, which is contained in the $\sqrt{d-1} \varepsilon$ neighbourhood of $I$. The choice of $S_\varepsilon$ was dictated by the fact (immediate from \eqref{eq:characterization_Sepsilon}) that $\{\frac{Tv}{\|Tv\|}\mid v \in S_{1/2}\}$ is precisely $S_{\varepsilon}$. But a small computation gives that for a unit vector, $\|\mathbf{v}-\mathbf{e}_1\|\leq \varepsilon/2$ implies $\mathbf{v} \in S_{\varepsilon}$. So for every such $\mathbf{v}$, $F^{I,\varepsilon,\delta}$ contains a segment parallel to $\mathbf{v}$ joining the side $\{x_1=-1/2\}$ to the side $\{x_1=1/2\}$, so in particular a unit length segment parallel to $\mathbf{v}$. Finally, the volume estimate is clear because $T$ has determinant $(2\varepsilon)^{d-1}$ and $T$ contracts the distances by at most $2\varepsilon$:
  \[ |\overline{\mathcal{N}_{2\delta}(F^{I,\varepsilon,\delta})}| \leq |T \overline{\mathcal{N}_{\delta/\varepsilon}(H^{\delta/\varepsilon})}| = (2\varepsilon)^{d-1}|\overline{\mathcal{N}_{\delta/\varepsilon}(H^{\delta/\varepsilon})}| \lesssim_d \varepsilon^{d-1} f(\delta/\varepsilon).\]

  Let us prove the claim. For every $\mathbf{k} \in \Z^d$, consider the cube $Q_{\mathbf{k}}= \frac{\mathbf{k}}{2} + [-1/2,1/2]^d$, with its distinguished sides $Q_k^\pm= \big(\frac{\mathbf{k}}{2} + \{\pm \frac 1 2\mathbf e_1\}\big)\times [-1/2,1/2]^{d-1}$. We have a covering $\R^d= \bigcup_{\mathbf{k}\in \Z^d} Q_{\mathbf{k}}$, that is not too redundant in the sense that there is a number $N_d$ such that every point of $\R^d$ belongs to the $1$-neighbourhood of at most $N_d$ different cubes. For example $N_d=7^d$ works. However, the covering has enough redundancy so that for every segment $J$ of length $c_0=3\sqrt{5}/4$ and parallel to a vector in $S_{\frac 1 2}$, there is a cube $Q_k$ such that $Q_k \cap J$ joins $Q_k^-$ to $Q_k^+$. Indeed, the projection of $J$ on the axis $\R \mathbf{e}_1$ has length at least $c_0/\sqrt{1+1/2^2} = \frac 3 2$, so it contains entirely a segment of the form $k_1/2 + [-1/2,1/2]$ for some integer $k_1$. Let $J'$ be the intersection of $J$ with the band $\big(k_1/2 + [-1/2,1/2]\big) \times \R^{d-1}$. Since $\mathbf{v}$ belongs to $S_{\frac 1 2}$, by \eqref{eq:characterization_Sepsilon} the projection of $J'$ on $\{0\}\times \R^{d-1}$ has length $\leq 1/2$, so it is contained in a cube of the form $\mathbf{k'}/2+[-1/2,1/2]^{d-1}$ for some $\mathbf{k'} \in \Z^{d-1}$. Equivalently, for $\mathbf{k}=(k_1,\mathbf{k'})$, $J'$ is contained in $Q_{\mathbf{k}}$ and joins $Q_k^-$ to $Q_k^+$. We can now define
  \[ H^\delta = \bigcup_{\mathbf{k} \in \Z^d}\Big( (Q_{\mathbf{k}} \cap c_0 G^{\delta/c_0}) - \mathbf{k}/2\Big).\]
    It is clear from the definition that $H^\delta$ is contained in $Q_{\mathbf{0}}=[-1/2,1/2]^d$. Since $G^{\delta/c_0}$ is a Kakeya set, for every direction $\mathbf{v} \in S_{\frac 1 2}$, $c_0 G^{\delta/c_0}$ contains a vector parallel to $\mathbf{v}$ of length $c_0$, so by the preceding discussion $H^\delta$ contains a vector parallel to $\mathbf{v}$ joining $Q_{\mathbf{0}}^-$ to $Q_{\mathbf{0}}^+$. Finally, for every point $\mathbf{x}$ in the $\delta$-neighbourhood of $H^\delta$, there is at least one point $\mathbf x'$ in the $\delta$-neighbourhood of $c_0 G^{\delta/c_0}$ such that $\mathbf{x}-\mathbf{x'} \in \Z^d/2$. Moreover the map $\mathbf{x}\mapsto \mathbf{x'}$ is at-most-$N_d$-to-one, so we have the bound
    \[ |\overline{\mathcal{N}_{\delta}(H^\delta)}| \leq N_d |\overline{\mathcal{N}_{\delta}(c_0 G^{\delta/c_0})}| \leq N_d c_0^d f(\delta/c_0) \leq N_d c_0^d f(\delta).\]
This proves the claim and the lemma.        
\end{proof}

\begin{proof}[Proof of Proposition~\ref{prop: prop kakeya}] For the proof of the proposition, given $\delta_n=2^{-2^n}$, we want to define a sequence of sets $F_n$ such that:
\begin{itemize}
    \item[(a)] $F_n$ is a Kakeya set for every $n\in \N$.
    \item[(b)] $\mathcal{N}_{\delta_n}(F_n)\subset \mathcal{N}_{\delta_{n-1}}(F_{n-1})$ for all $n\in\N$.
    \item[(c)] $|\overline{\mathcal{N}_{2\delta_n}(F_n)}|\lesssim_d f(C_d\sqrt{\delta_n})$, for some dimension dependent $C_d\geq1$.
\end{itemize}
We proceed inductively. We set $F_0=G^{\frac 1 2}$. If $F_0,\dots, F_{n-1}$ are constructed, define $\varepsilon_n = \frac{\delta_{n-1} - \delta_n}{\sqrt{d-1}}$, and take $A$ a maximal $\varepsilon_n/2$ separated subset of $\mathbf{S}^{d-1}$, so that $\#|A|\leq C_d \varepsilon_n^{-(d-1)}$. For each $\mathbf{u}\in A$, choose a unit segment $I_\mathbf{u}$ parallel to $\mathbf{u}$ in $F_{n-1}$, which can be done since $F_{n-1}$ is a Kakeya set. We now define
\begin{equation*}
    F_n=\bigcup_{\mathbf{u}\in A}F^{I_\mathbf{u},\varepsilon_{n},\delta_n}
\end{equation*}
where the $F^{I,\varepsilon,\delta}$ are as in Lemma~\ref{lem:local_kakeya}. For this definition to make sense, we need $\varepsilon_n \geq\delta_n$, that is $n \geq 1+ \log_2 \log_2(1+\sqrt{d-1})$. Otherwise, we just set $F_n=F_{n-1}$. We have to check that $F_n$ satisfies the three properties we ask for. This is obvious when  $n < 1+ \log_2 \log_2(1+\sqrt{d-1})$ for an appropriate value of $C_d$, so we focus on the generic case. For any $\mathbf{v}\in \mathbf{S}^{d-1}$, there is $\mathbf{u}\in A$ such that $\|\mathbf{v}-\mathbf{u}\|\leq \varepsilon_{n}/2$, so by (1) in Lemma~\ref{lem:local_kakeya} $F_n$ contains a unit segment parallel to $\mathbf{v}$. To prove (b), note that every point $\mathbf{x}\in \mathcal{N}_{\delta_n}(F_n)$, is at distance $\leq \delta_n$ from a point $\mathbf{y}\in F^{I_\mathbf{u},\varepsilon_n,\delta_n}$ for some $\mathbf{u}\in A$. But by (2) in Lemma~\ref{lem:local_kakeya}, $\mathbf{y}$ belongs to the $\sqrt{d-1} \varepsilon_n = \delta_{n-1} - \delta_n$-neighbourhood of $I_{\mathbf{u}}$, with $I_{\mathbf{u}} \subset F_{n-1}$. So $\mathbf{x}$ belongs indeed to the $\delta_n+(\delta_{n-1}-\delta_n)=\delta_{n-1}$-neighbourhood of $F_{n-1}$. Finally, by the union bound and (3) in Lemma~\ref{lem:local_kakeya}, we have 
\begin{equation*}
    |\overline{\mathcal{N}_{2\delta_n}(F_n)}|\lesssim_d \#|A| \varepsilon_n^{d-1} f(\delta_n/\varepsilon_n) \lesssim_d f(2\sqrt{d-1} \delta_{n-1}).
\end{equation*}
as required. The last inequality is because $\#|A|\lesssim_d \varepsilon_n^{-(d-1)}$, $f$ is non-decreasing and $\delta_n/\varepsilon_n = \sqrt{d-1} \delta_{n-1}/(1-\delta_{n-1}) \leq 2\sqrt{d-1} \delta_{n-1}$. Hence, we have obtained the required sequence of sets $F_n$, and we now consider 
\begin{equation*}
    E=\bigcap_{n=1}^\infty\overline{\mathcal{N}_{\delta_n}(F_n)}.
\end{equation*}
$E$ is a compact Kakeya set, as it is the limit of a decreasing sequence of Kakeya sets. Furthermore, for any $\delta\in (0,1/2]$, there exists a unique $n\in \N$ so that $\delta_{n+1}<\delta\leq \delta_{n}$. Hence, 
\begin{equation*}
    |\mathcal{N}_\delta(E)|\leq |\overline{\mathcal{N}_{2\delta_{n}}(F_{n})}|\lesssim_df(C_d\sqrt{\delta_{n}})=f(C_d\sqrt[4]{\delta_{n+1}})\lesssim_d \widetilde{f}(\delta),
\end{equation*}
where in the last inequality we have used the non-decreasing property of $f$. This proves the proposition.
\end{proof}

With the above, we immediately obtain a proof of Theorem \ref{thm: main_kakeya}:
\begin{proof}[Proof of Theorem \ref{thm: main_kakeya}]
By Lemma \ref{lemma: lemma kakeya}, we know that for every $\delta\in (0,1)$ there exists a Kakeya set $G^\delta$ such that $|\overline{\mathcal{N}_{\delta}(G^\delta)}|\lesssim_d f(\delta)$, with $f(\delta)=|\log\delta|^{1-d}$. Hence by Proposition \ref{prop: prop kakeya}, there exists a Kakeya set $E$ such that $\forall \delta\in (0,1)$ $|\mathcal{N}_\delta(E)|\lesssim_d f(C_d\sqrt[4]{\delta})\lesssim_d |\log\delta|^{1-d}$, using the properties of the logarithmic function.
\end{proof}

\subsection{On the optimality of Theorem~\ref{thm: main_kakeya}}\label{subsection:optimality}

We now explain why, according to some conjectures in the field, Theorem~\ref{thm: main_kakeya} should be optimal.

 Let $\delta>0$ be a small parameter and let $\Phi:\R\rightarrow\R$ a smooth function of compact support in $[-1,1]$. We define the Fourier multiplier $S^\delta$ on $\R^d$ as
\begin{equation*}
    \widehat{(S^\delta f)}(\xi)=\Phi\Bigg(\frac{|\xi|-1}{\delta}\Bigg)\hat{f}(\xi).
\end{equation*}
We decompose the $\delta$-neighbourhood $\{\xi:||\xi|-1|\leq \delta\}$ of $\mathbf{S}^{d-1}$ into coin-shaped pieces $E_\alpha$ of tangential dimensions $\delta^{1/2}\times\dots\times \delta^{1/2}$ and radial dimension $\delta$, and correspondingly the operator $S^\delta=\sum_\alpha S_\alpha$ where $S_\alpha$ is a Fourier multiplier operator with smooth multiplier $\phi_\alpha$ supported in and adapted to $E_\alpha$. We shall refer to
\begin{equation*}
    ||S^\delta f||_{L^{\frac{2d}{d-1}}(\R^d)}\leq C_d\Bigg|\Bigg|\Bigg(\sum_\alpha|S_\alpha f|^2\Bigg)^{1/2}\Bigg|\Bigg|_{L^{\frac{2d}{d-1}}(\R^d)},
\end{equation*}
where $C_d$ is independent of $\delta$ and $f$, as the reverse Littlewood--Paley conjecture. When $d=2$ it was proved by Fefferman in \cite{0b62136a12e44b6ca6ef02477cb50c4e}, but in higher dimensions it remains open. 

Indeed, Carbery showed \cite{carbery2015remarkreverselittlewoodpaleyrestriction} that the reverse Littlewood-Paley conjecture implies that the Kakeya maximal operator $K_\delta$ has norm $O(|\log \delta|^{\frac{d-1}d})$ from $L_{d}(\R^d) \to L_d(\mathbf{S}^{d-1})$, where 
\begin{equation*}
    K_\delta f(u)=\sup_{T\parallel u} \frac{1}{|T|} \int_T |f|,
\end{equation*}
the supremum being over all $\delta$-tubes in the direction $u$. We warn the reader that in \cite{carbery2015remarkreverselittlewoodpaleyrestriction}, the result is stated in terms of the maximal operator $M_{1/\delta} f(u) =\sup_{T\parallel u} \frac{1}{|T/\delta|} \int_{T/\delta} |f|$ (same supremum), but the proof shows it for $K_\delta$ rather than $M_{1/\delta}$. The $L_d\to L_d$ norms of $K_\delta$ and $M_{1/\delta}$ are related by $\|M_{1/\delta}\| = \delta \|K_\delta\|$. 

If $E$ is a Kakeya set, then its $\delta$-neighbourhood $\mathcal{N}_\delta(E)$ contains a $\delta$-tube in every direction. This means that if $f$ is the indicator of $\mathcal{N}_\delta(E)$, then $K_\delta f=1$, so we get $1 \leq  O( |\log \delta|^{\frac{d-1}d} |\mathcal{N}_\delta(E)|^{\frac 1 d})$, or equivalently $|\mathcal{N}_\delta(E)|\geq C_d |\log \delta|^{-(d-1)}$.

\section{Application to harmonic analysis}\label{sec: harmonic}
\subsection{Besov spaces with logarithmic smoothness}\label{subsection:besov}
Let $(W_n)_{n \geq 0}$ be a Littlewood-Paley partition. This means that there are two functions $w_0,w \in C^\infty_c(\R)$ with $0$ not in the support of $w$ with the following property: if for $n\geq 0$ we set $w_n(x) = w(2^{-n}|x|)$, we have
\[ \sum_{n\geq 0} w_n(x) = 1\]
and $W_n$ is the function in the Schwartz class whose Fourier transform is $w_n$. For $p,q \in [1,\infty]$ and $b,s \in \R$, we then set
\[ B_{p,q}^{s,b}(\R) = \big\{ f \in \mathcal{S}'(\R)\mid  n \mapsto 2^{sn} (1+n)^b \|W_n \ast f\|_{L_p(\R)} \in \ell_q(\N)\big\}.\]
It is a Banach space for the natural norm, and it is a standard fact that the space does not depend on the choice of the Littlewood-Paley partition.
\begin{example}\label{ex:powerLogBesov}
Let $\beta>0$. The function $t \mapsto |\log \frac{(1-e^t)_+}{2}|^{-\beta}$ belongs to $B_{\infty,\infty}^{0,b}(\R)$ if and only if $b \leq \beta$.
\end{example}
\begin{proof}
Let $f_\beta (t)= |\log \frac{(1-e^t)_+}{2}|^{-\beta}$. It is a $C^\infty$ on $(-\infty,0)$, $0$ on $(0,\infty)$, and, as $t\to 0^-$,
\begin{equation}\label{eq:asymptotics_f_beta}
    f_\beta(t) \sim \frac{1}{|\log |t| |^{\beta}} \textrm{ and } f'_\beta(t) \sim -\frac{\beta}{|t|  |\log |t| |^{1+\beta}}.
\end{equation}
Let $\varphi \in C^\infty_c(\R)$ equal to $1$ on a neighbourhood of $0$, and consider $g_\beta = \varphi f_\beta$. Then $f_\beta - g_\beta=(1-\varphi) f_\beta$ is Lipschitz, so $\|W_n \ast (f_\beta-g_\beta)\|_\infty = O(2^{-n})$ as $n\to \infty$. Moreover, it follows by integration by part that
\[ \widehat{g_\beta}(\xi) = \frac{1}{2i\pi\xi}\int_0^{\infty} g_\beta'(-t) e^{2i\pi t\xi} dt.\]
By decomposing the integral as $\int_0^{\varepsilon/|\xi|} + \int_{\varepsilon/|\xi|}^\infty$ for $\varepsilon = (\log |\xi|)^{-1/2}$, we see from \eqref{eq:asymptotics_f_beta} that \[\widehat{g_\beta}(\xi) \sim \frac{1}{2i\pi \xi (\log |\xi|)^\beta}\]
as $|\xi|\to \infty$. In particular, we have
\[ \sup_{\xi \in \mathrm{supp}(\widehat{W_n})} \big|\widehat{g_\beta}(\xi) - \frac{1}{2i\pi n^\beta \xi}\big| = o(n^{-\beta}).\]
But the Fourier transform of $\chi_{(-\infty,0)}$ coincides on $\R\setminus\{0\}$ with the function $\frac{1}{2i\pi \xi}$, so, by the inequality $\|W_n \ast F\|_\infty \leq \|\widehat{W_n}\|_1 \sup_{\xi \in \mathrm{supp}(\widehat{W_n})} |\widehat{F}(\xi)|$, this implies that
\[ \|W_n \ast (g_\beta - n^{-\beta}\chi_{(-\infty,0)})\|_\infty = o(n^{-\beta}),\]
so
\[ \|W_n \ast f_\beta\|_\infty = n^{-\beta} \|W_n \ast \chi_{(-\infty,0)}\|_\infty + o(n^{-\beta}) = n^{-\beta} \|W_1 \ast \chi_{(-\infty,0)}\|_\infty + o(n^{-\beta}) .\]
The last equality is a change of variable. This concludes the proof, because $\|W_1 \ast \chi_{(-\infty,0)}\|_\infty$  is a nonzero real number, and therefore $\sup_{n} (1+n)^{b} \|W_n\ast f_\beta\|_\infty <\infty$ if and only if $\sup_{n} (1+n)^{b-\beta}<\infty$, that is $b \leq \beta$.
\end{proof}

\subsection{Quantitative aspects of Kakeya sets and radial Fourier multipliers}
Recall the notation $f_d(\delta)$ from  subsection~\ref{subsection:tubes}. If $m:\R^d \to \C$ is a bounded measurable function, the Fourier multiplier with symbol $m$ is the map sending a function $f$ to the function whose Fourier transform is $m \hat{f}$. When this map extends to a bounded map on $L_p(\R^d)$, we say that it is $L_p$-bounded. In a recent work \cite{delasalle2026kakeyaconjecturehighranklattice}, the first-named author extended Fefferman's ball multiplier theorem as follows:
\begin{theorem}\label{thm:regularity_Fourier_multipliers}\cite{delasalle2026kakeyaconjecturehighranklattice}
Let $1<p<\infty$ and $m:(0,\infty) \to \R$ a bounded measurable function such that $T$, the Fourier multiplier with symbol $m(|\cdot|)$ is bounded on $L_p(\R^d)$. Then $\varphi:t\in \R \mapsto m(\exp t)$ satisfies
  \[ \|W_n\ast \varphi\|_\infty \leq C_d \|T\|_{L_p \to L_p} \inf_{\delta \in (0,1)} \left(f_d(\delta)^{\big|\frac 1 p - \frac 1 2\big|} + \frac{1}{2^n \delta^2} \right)\]
for all $n \geq 0$.
\end{theorem}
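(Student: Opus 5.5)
The plan is to follow Fefferman's ball multiplier argument and make it quantitative. Fix $n\ge 0$ and $\delta\in(0,1)$. The tube families witnessing $f_d(\delta)$ will be combined with a vector-valued reduction from radial to one-dimensional directional multipliers.

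\textbf{Step 1: normalisation.} The $L_p$ norm of a Fourier multiplier is invariant under dilations of its symbol. Hence $m(\lambda|\cdot|)$ has the same norm as $m(|\cdot|)$ for every $\lambda>0$, and it suffices to bound $|W_n\ast\varphi(0)|$. Note that $W_n\ast\varphi(0)=\int m(e^{-s})W_n(s)\,ds$ is a smooth, mean-zero average of $m$ over radii $r=e^{-s}$ with $|s|\lesssim 2^{-n}$, up to Schwartz tails.

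We may assume $p\ge 2$. For $p<2$, apply the case $p'$ to the adjoint multiplier, which has symbol $\overline{m}(|\cdot|)$ and the same norm, since $|\tfrac1p-\tfrac12|$ is symmetric under $p\mapsto p'$.

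\textbf{Step 2: from the sphere to directional multipliers.} Take $\delta$-tubes $R_1,\dots,R_N$ whose translates $\translate{R_j}{\alpha}$ are pairwise disjoint and which nearly realise $f_d(\delta)$. Let $u_j$ be the axis direction of $R_j$, and set $\psi=W_n\ast\varphi$ viewed through the change of variable $r=e^{t}$. Consider the directional symbols $\xi\mapsto \psi_0(\langle \xi,u_j\rangle)$, which depend on one coordinate only.

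I would invoke a Meyer-type lemma to pass from $T$ to these symbols. Applied to the functions $\sum_j$ modulated by frequency $\lambda u_j$ with $\lambda$ large, and after Khintchine averaging, it gives
\[ \Big\|\Big(\sum_j |M_j f_j|^2\Big)^{1/2}\Big\|_p \le C_d\|T\|\,\Big\|\Big(\sum_j|f_j|^2\Big)^{1/2}\Big\|_p, \]
where $M_j$ is the multiplier with symbol $m(\langle\xi,u_j\rangle)$ (convolved with $W_n$ in the radial log variable).

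Comparing $m(|\xi|)$ with $m(\langle \xi,u\rangle)$ on a cap of angular width $\sim\delta$ costs a radial discrepancy of order $\delta^2$. Measured against the $2^{-n}$-smoothness of $W_n$, this discrepancy should produce exactly the additive error term $\frac{1}{2^n\delta^2}$. Controlling this error cleanly, using the Schwartz decay of $W_n$ and $|m|\le\|T\|$, is the first technical point.

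\textbf{Step 3: Fefferman's test functions and Hölder.} Choose $f_j=\mathbf 1_{R_j}$ modulated along $u_j$ at an appropriate frequency. The one-dimensional multiplier in direction $u_j$ acts by convolution along the axis. For this choice, $|M_jf_j|\gtrsim |W_n\ast\varphi(0)|$ on $\translate{R_j}{\alpha}$: the kernel transports mass a distance $1+\alpha$ along the tube, and this is where the shift parameter is fixed in terms of $2^n$ and the tube length is rescaled.

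Using the disjointness of the $\translate{R_j}{\alpha}$, the left side of the Meyer inequality is at least $c\,|W_n\ast\varphi(0)|\,(\sum_j|R_j|)^{1/p}$. For the right side, $\sum_j|f_j|^2=\sum_j\mathbf 1_{R_j}$, and Hölder on $\bigcup R_j$ gives
\[ \Big\|\Big(\sum_j \mathbf 1_{R_j}\Big)^{1/2}\Big\|_p\le \Big|\bigcup R_j\Big|^{\frac1p-\frac12}\Big(\sum_j|R_j|\Big)^{1/2}. \]
Dividing yields $|W_n\ast\varphi(0)|\lesssim \|T\|\,\big(|\bigcup R_j|/\sum|R_j|\big)^{\frac12-\frac1p}$ plus the Step 2 error. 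Taking the infimum over tube families and over $\delta$ gives the claim.

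\textbf{Main obstacle.} I expect the hardest part to be Steps 2–3 together. One must arrange the modulation frequency, the scale $2^{-n}$ and the tube length so that the directional kernel genuinely maps $R_j$ onto $\translate{R_j}{\alpha}$ with amplitude $\gtrsim|W_n\ast\varphi(0)|$. At the same time, the curvature error of the sphere over $\delta$-caps must stay of size $1/(2^n\delta^2)$.
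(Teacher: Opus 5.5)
The paper does not prove this statement; it is quoted from the second author's earlier work. I therefore compare your plan with the quantitative Fefferman argument it is modelled on. Your architecture is the right one:
\begin{itemize}
\item replace $m$ by $\psi(r)=(W_n*\varphi)(\log r)$, whose multiplier $\int_{\R}W_n(s)\,m(e^{-s}|\cdot|)\,ds$ is an average of dilates of $T$, with norm at most $\|W_n\|_1\|T\|$;
\item modulate and use the $\ell_2$-valued extension;
\item compare with directional multipliers;
\item test on tubes with disjoint translates, and conclude with H\"older.
\end{itemize}
However, two steps fail as written.

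First, your H\"older inequality $\|(\sum_j\mathbf 1_{R_j})^{1/2}\|_p\le|U|^{\frac1p-\frac12}(\sum_j|R_j|)^{1/2}$, with $U=\bigcup_jR_j$, is Jensen's inequality for the concave map $s\mapsto s^{p/2}$. It holds only for $p\le 2$. For $p>2$ it reverses, strictly as soon as $\sum_j\mathbf 1_{R_j}$ is non-constant on $U$, which is exactly the overlapping situation. You reduced to $p\ge 2$. Even formally, dividing your two bounds gives $|W_n*\varphi(0)|\lesssim\|T\|\,(|U|/\sum_j|R_j|)^{\frac1p-\frac12}$. For $p>2$ this exponent is negative, so the bound is useless, and the exponent $\frac12-\frac1p$ you wrote does not follow. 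The fix is to reduce to $1<p\le 2$ instead; your duality remark works equally well in that direction. Alternatively, for $p>2$, swap Fefferman's roles: put the sources on the disjoint $\translate{R_j}{\alpha}$, take the lower bound on the overlapping $R_j$, and apply H\"older on $U$ to the left-hand side.

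Second, the curvature error is mis-scaled, and it does not come from a Meyer-type limit. For general $m$ the symbols $m(|\xi+\lambda u|)$ have no limit as $\lambda\to\infty$. You must fix $\lambda\simeq 2^n$, dilate $\psi(|\cdot|)$ to $\psi(|\cdot|/\lambda)$, and modulate by $\lambda u_j$.
\begin{itemize}
\item The directional symbol $s\mapsto\psi(e^{s/\lambda})=(W_n*\varphi)(s/\lambda)$ is band-limited. Its one-dimensional kernel is therefore supported where $|x|\in 2^n\lambda^{-1}\operatorname{supp}w$, i.e.\ at distance $\simeq 1$. This is what ties $\lambda$ to the tube length and to $\alpha$.
\item Since $W_n*\varphi$ is real, that kernel is conjugate-symmetric. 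Its two halves therefore have conjugate masses summing to $\psi(1)$, so the forward half has mass of modulus at least $|\psi(1)|/2$. This is the actual reason for $|M_jf_j|\gtrsim|W_n*\varphi(0)|$ on a fixed portion of $\translate{R_j}{\alpha}$.
\item In this normalisation a bump adapted to a $\delta$-tube has transverse frequencies $|\xi_\perp|\lesssim\delta^{-1}$. That is a cap of angular width $(2^n\delta)^{-1}$, not $\delta$.
\item The log-radial discrepancy $|\log(|\xi+\lambda u_j|/\lambda)-\langle\xi,u_j\rangle/\lambda|$ is $\lesssim(1+|\xi_\perp|^2)/\lambda^2$. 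Multiplying by the Lipschitz constant $\lesssim 2^n\|T\|$ of $W_n*\varphi$ gives $\|T\|/(2^n\delta^2)$.
\end{itemize}
Your bookkeeping (width $\delta$, discrepancy $\delta^2$, measured against $2^{-n}$) gives $2^n\delta^2$, which is the opposite regime.

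To convert this symbol bound into $\|E_jf_j\|_p\lesssim\frac{\|T\|}{2^n\delta^2}|R_j|^{1/p}$, you need $f_j$ to be a smooth bump with $|f_j|\le\mathbf 1_{R_j}$. The indicator $\mathbf 1_{R_j}$ itself does not work: its Fourier transform decays slowly beyond $|\xi_\perp|\sim\delta^{-1}$, where the discrepancy is of size $\|T\|$. Such per-$j$ bounds suffice because the lower bound localises: $\|(\sum_j|g_j|^2)^{1/2}\|_p^p\ge\sum_j\|\mathbf 1_{\translate{R_j}{\alpha}}\,g_j\|_p^p$.
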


Theorem~\ref{thm:main_tubes} implies the following, which immediately implies Corollary~\ref{cor:radial_mult_sobolev}.
\begin{lemma}\label{lemma:computation_inf}
For every integer $d \geq 2$, there is a constant $C$ such that for every $p \in (1,\infty)$ and every integer $n \geq 0$,
\[ \inf_{0<\delta<1} f_d(\delta)^{\big|\frac 1 p - \frac 1 2\big|}  + \frac{1}{2^n \delta^2} \leq \frac{C}{(1+n)^{(d-1) \big|\frac 1 p - \frac 1 2\big|}}.\]
\end{lemma}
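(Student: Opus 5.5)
The plan is to pick a single $\delta=\delta_n$ in the infimum and bound the two terms separately. Here $f_d$ means $f_d^\alpha$ for the fixed $\alpha$ used in Theorem~\ref{thm:regularity_Fourier_multipliers}. By Theorem~\ref{thm:main_tubes}, in the form \eqref{eq:main_tubes}, there is $C_{d}$ with $f_d(\delta)\le C_d|\log\delta|^{-(d-1)}$ for all $\delta\in(0,1)$. Write $\gamma=\bigl|\frac1p-\frac12\bigr|\in[0,\frac12)$. Then for any admissible $\delta$,
\[ f_d(\delta)^{\gamma}+\frac{1}{2^n\delta^2}\le C_d^{\gamma}\,|\log\delta|^{-(d-1)\gamma}+\frac{1}{2^n\delta^2}.\]
Since $\gamma<\frac12$ and $C_d\geq 1$ may be assumed, $C_d^\gamma\le C_d^{1/2}$, so the constant is uniform in $p$.

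We choose $\delta_n=2^{-n/4}$ for $n\ge 1$. Then $|\log\delta_n|=\frac{n\log 2}{4}\gtrsim 1+n$, so the first term is at most $C(1+n)^{-(d-1)\gamma}$. The constant is uniform in $\gamma$ because $(4/\log 2)^{(d-1)\gamma}\le (4/\log2)^{(d-1)/2}$ and $\bigl(\frac{1+n}{n}\bigr)^{(d-1)\gamma}\le 2^{(d-1)/2}$. The second term equals $2^{-n}2^{n/2}=2^{-n/2}$. This decays exponentially, so $2^{-n/2}\le C'(1+n)^{-(d-1)/2}\le C'(1+n)^{-(d-1)\gamma}$, where the last step uses $(1+n)^{-a}$ decreasing in $a$ and $(d-1)\gamma\le (d-1)/2$. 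Summing gives the claim for $n\ge1$.

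For $n=0$ the right-hand side is $C$. It suffices to note that $f_d(\delta)\le 1$ trivially, since the ratio in \eqref{eq: f delta} is always at most $1$. Taking, say, $\delta=1/2$ then bounds the infimum by $1+4$.

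The only real subtlety is uniformity of the constant in $p$, handled by $\gamma<1/2$ as above. One must also make sure \eqref{eq:main_tubes} is available for all $\delta\in(0,1)$ and not only for small $\delta$. For $\delta$ bounded below, one can just use $f_d\le 1$ and enlarge the constant, because $|\log\delta|$ is then bounded on the relevant range, except near $\delta=1$, which we avoid since $\delta_n\le 2^{-1/4}$.
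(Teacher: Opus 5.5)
Your proof is correct and follows the same route as the paper. Both fix a single exponentially small $\delta$ in the infimum, bound the first term via Theorem~\ref{thm:main_tubes}, and bound the second term directly. The paper takes $\delta=\min\bigl(1/2,\,2^{-n/2}(1+n)^{\alpha/2}\bigr)$ with $\alpha=(d-1)\bigl|\frac1p-\frac12\bigr|$, so the second term is exactly $(1+n)^{-\alpha}$, whereas your $\delta_n=2^{-n/4}$ makes it exponentially small; this difference is cosmetic. Your explicit check that the constant is uniform in $p$, via $\gamma<\frac12$, spells out what the paper leaves implicit.
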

\begin{proof}
Let $\alpha = (d-1) \big|\frac 1 p - \frac 1 2\big|$. Theorem~\ref{thm:main_tubes} asserts that $f_d(\delta)^{\big|\frac 1 p - \frac 1 2\big|} \lesssim_d |\log \delta|^{-\alpha}$. For $\delta = \min(1/2,2^{-n/2} (1+n)^{\alpha/2})$, both terms $f_d(\delta)^{\big|\frac 1 p - \frac 1 2\big|}$ and $\frac{1}{2^n \delta^2}$ are $O_d( (1+n)^{-\alpha})$.
\end{proof}
Adapting the proof of Theorem~\ref{thm:regularity_Fourier_multipliers} to spherical harmonic analysis and Schur multipliers, the second-named author also proved the following. Here is $m$ is a bounded measurable map $X\times X\to \C$, the Schur multiplier with symbol $m$ is the map sending the Hilbert-Schmidt operator on $L_2(X)$ with kernel $(A_{x,y})_{x,y \in X}$ to the one with kernel $(m(x,y) A_{x,y})_{x,y \in X}$. When this map extends to a bounded linear map on $S_p(L_2(X))$, we say that it is $S_p$-bounded.
\begin{theorem}\cite{delasalle2026kakeyaconjecturehighranklattice}
Let $1<p<\infty$ and $m:(-1,1) \to \R$ be a bounded measurable function such that $S$, the Schur multiplier with symbol $(x,y)\mapsto m(\langle x,y\rangle)$ is bounded on $S_p(L_2(\mathbb{S}^d))$. Then $\varphi:t\in \R \mapsto m( \cos t)$ satisfies
  \[ |W_n\ast \varphi(\theta)| \leq C_d \|S\|_{S_p \to S_p} \inf_{\delta \in (0,1)} \left(f_d(\delta)^{\big|\frac 1 p - \frac 1 2\big|} + \frac{1}{|2^n \delta^2 \sin \theta|^{\frac 1 3}} \right)\]
for all $n \geq 0$.
\end{theorem}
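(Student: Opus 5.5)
The plan is to transplant the proof of Theorem~\ref{thm:regularity_Fourier_multipliers} from $\R^d$ to the sphere, replacing Fourier multipliers by Schur multipliers and translations by rotations. Fix $n$, $\theta\in(0,\pi)$ and $\delta\in(0,1)$. The quantity $W_n\ast\varphi(\theta)$ only sees $m$ on the set $\{\langle x,y\rangle = \cos s\}$ with $|s-\theta|\lesssim 2^{-n}$. So I will test $S$ on operators whose kernels are concentrated on pairs $(x,y)\in\mathbb{S}^d\times\mathbb{S}^d$ with geodesic distance close to $\theta$. These pairs will come in blocks indexed by a tube family: for each $\delta$-tube $R_j$ from the definition of $f_d(\delta)$, place a small piece near $x_0$ modelling $R_j$ and a piece near the antipodal-type point at distance $\theta$ modelling the disjoint translate $\translate{R_j}{\alpha}$. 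Here "near $x_0$" means the exponential chart at scale $\rho$, with $\rho$ to be optimized.

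The key steps are the following.

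First, use a Schur-multiplier analogue of de Leeuw transference: on a chart of size $\rho$, the kernel $m(\langle x,y\rangle)$ is approximated by $m(\cos(\theta+\langle \xi, u\rangle))$ for an appropriate frequency/direction variable. The Schur multiplier then acts blockwise like a one-dimensional Fourier multiplier in the direction joining the two pieces. This is where $\sin\theta$ enters, through the Jacobian of $(x,y)\mapsto\langle x,y\rangle$ near distance $\theta$.

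Second, run Fefferman's argument as in \cite{delasalle2026kakeyaconjecturehighranklattice}. Build functions $g_j$ supported on the (rescaled) $R_j$ and oscillating along the tube axis at frequency $\sim 2^n$. The multiplier, localized at that frequency, moves mass from $R_j$ onto $\translate{R_j}{\alpha}$ with amplitude $|W_n\ast\varphi(\theta)|$. Then combine:
\begin{itemize}
\item Khintchine/random signs;
\item the disjointness of the $\translate{R_j}{\alpha}$ on the output side;
\item Hölder on the small union $\bigcup R_j$ on the input side.
\end{itemize}
Together these give the main term $\|S\|\,f_d(\delta)^{|\frac1p-\frac12|}$, exactly as in the Euclidean case. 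This is done first for $p\le 2$ via $S_p$ norms of column/row sums, and the case $p>2$ follows by duality.

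Third, bound the error. The flat model differs from the true spherical kernel by curvature terms of order $\rho^2$ and by the tube width mismatch of order $\delta\rho$. This produces an error that, after normalizing, is of size $(2^n\delta^2\sin\theta)^{-1}$ times powers of $\rho$ and its inverse. Optimizing $\rho$ yields the loss of the exponent $\frac13$ in place of the clean Euclidean term $\frac{1}{2^n\delta^2}$.

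The main obstacle I expect is this third step. The Euclidean proof used exact translation invariance to make the tube blocks exact Fourier multipliers, whereas on $\mathbb{S}^d$ the rotation group does not act by translations. One therefore needs a quantitative comparison in $S_p$ norm between the true Schur multiplier on the block kernel and its linearized version.

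To handle this, I would first try to write the difference as a Schur multiplier with a small smooth symbol. I would then bound it crudely in $S_p$ by its $S_1$/Hilbert–Schmidt norms, using that the blocks live on sets of controlled measure. Finally I would balance this against the chart size $\rho$, which is what should produce the $|2^n\delta^2\sin\theta|^{-1/3}$ term.
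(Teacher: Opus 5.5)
The paper does not prove this statement. It is quoted from \cite{delasalle2026kakeyaconjecturehighranklattice}, with only the remark that it comes from adapting the proof of Theorem~\ref{thm:regularity_Fourier_multipliers} to spherical harmonic analysis and Schur multipliers. Your Euclidean bookkeeping is the standard Fefferman-type argument: random signs, H\"older on $\bigcup_j R_j$ for $p\le 2$, disjointness of the $\translate{R_j}{\alpha}$, duality for $p>2$. But everything specific to the sphere is only announced, and the one concrete mechanism you give for it fails.

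\textbf{The symbol-side comparison fails.} The symbol $m$ is merely bounded and measurable. So $m(\langle x,y\rangle)-m(\cos(\theta+\langle\xi,u\rangle))$ admits no bound uniform over $\|m\|_\infty\le 1$ in any norm controlling Schur multipliers on $S_p$. The Schur norm dominates the sup norm, and composing a rough $m$ with two phases that differ by a tiny amount can change it by $\|m\|_\infty$ everywhere. In particular the difference is not a small smooth symbol. On $\R^d$ you could first mollify $t\mapsto m(e^t)$ for free by averaging over dilations, which preserve multiplier norms. No symmetry of $\mathbb{S}^d$ shifts the distance $t=d(x,y)$, so that shortcut is unavailable. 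The crude $S_1$/Hilbert--Schmidt estimate does not rescue this. For $p<2$ the Hilbert--Schmidt norm is dominated by the $S_p$ norm rather than dominating it. The $S_1$ norm of a bounded kernel supported on a set of controlled measure can be arbitrarily large: a $\pm1$ Hadamard checkerboard kernel on $[0,1]^2$ made of $N\times N$ squares has trace norm $\sqrt N$.

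\textbf{The missing idea} is to keep every approximation on the test operators, never on the symbol. One workable scheme has three steps. First, transplant the Euclidean test kernels to $\mathbb{S}^d$ through charts; this changes $S_p$ norms only by harmless Jacobian factors. Second, bound the output from below by pairing $S(A)$ with an explicit test operator $B$ built from the translates. Third, write
\[ \langle S(A),B\rangle=\int_{-1}^{1} m(s)\,d\mu_{A,B}(s), \]
where $\mu_{A,B}$ is the push-forward of $K_A\overline{K_B}\,dx\,dy$ under $(x,y)\mapsto\langle x,y\rangle$. Let $\mu_{\mathrm{flat}}$ be the analogous push-forward in the flat Herz--Schur model. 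Each curvature or localisation error then costs $\|m\|_\infty\|\mu_{A,B}-\mu_{\mathrm{flat}}\|_{\mathrm{TV}}\le\|S\|\,\|\mu_{A,B}-\mu_{\mathrm{flat}}\|_{\mathrm{TV}}$, which is an estimate on explicit smooth data. Only after these errors are computed can $\rho$ be optimised. Your proposal contains no such computation, so the exponent $\frac13$ and the factor $\sin\theta$ are asserted rather than derived.

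\textbf{Your dictionary is also off.} A Schur multiplier multiplies the kernel pointwise and moves no mass between regions of $\mathbb{S}^d\times\mathbb{S}^d$. After transference, both $R_j$ and $\translate{R_j}{\alpha}$ live in the variable dual to the chart difference $x-y$. Consider the block with rows near $x_0$ and columns near the point at distance $\theta$ from $x_0$ in direction $u_j$. That block must encode the whole modulated function $g_j$. The distance $\theta$ plays the role of the frequency radius $|\xi|$, not of the separation between $R_j$ and its translate. The distance sphere of radius $\sin\theta$ supplies the curvature that makes a single multiplier act in all the directions $u_j$.
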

With the same proof as in Lemma~\ref{lemma:computation_inf}, we obtain the concrete bound as a consequence of Theorem~\ref{thm:main_tubes}
  \[ |W_n\ast \varphi(\theta)| \leq C_d \|S\|_{S_p \to S_p} \min(1, (1+\log(2^n |\sin \theta|)) ^{-(d-1)|\frac 1 p - \frac 1 2|}.\]
In particular, if $m$ is supported in $[-c,c]$ for some $c<1$, we obtain that $\varphi \in B_{\infty,\infty}^{0,b}(\R)$ for $b=(d-1)|\frac 1 p - \frac 1 2|$.

\section{Interactive plots}
We have coded some of the constructions described in this paper, for the cases $d=2$ and $d=3$. Once compiled, the simulations allow one to rotate, zoom and toggle sliding bars that control the translation parameters. We include in separate files the python code to visualise the Perron tree construction associated to the fair subdivision of the $2$-simplex and to the 2-dimensional unit cube. The reader can find the \textit{Jupyter Notebooks} by clicking on the following GitHub repository:
\newline \url{https://github.com/rafaeljfernandezd/A-CONSTRUCTION-OF-KAKEYA-SETS-IN-ARBITRARY-DIMENSION0}

\begin{appendix}\label{appendix}
\section{Subdivisions of a  polytope}
We will now prove that we can obtain a Perron tree out of pyramids whose bases are any given  polytope, as long as we start with a sufficiently fine partition of it.

\begin{lemma}\label{lem:NSC_for_shrinking}
Let $(\Sigma_i)_{i \in I}$ be a partition of a polytope $\Sigma\subset \R^d$. Then \eqref{eq:assumption_shrinking} holds for some $c\in (0,1)$ if and only if for every $i$, the intersection of the faces of $\Sigma$ that meet $\Sigma_i$ is non-empty.
\end{lemma}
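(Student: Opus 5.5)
Here $\Sigma$ is a $(d-1)$-polytope in $E=\aff(\Sigma)$, and ``faces'' means facets. We write $\Sigma=\{\mathbf{y}\in E\mid \ell_k(\mathbf{y})\le b_k,\ k\in[K]\}$, with facets $F_k=\Sigma\cap\{\ell_k=b_k\}$. For a cell $\Sigma_i$ let $J_i=\{k\mid \Sigma_i\cap F_k\neq\emptyset\}$. Lower-dimensional faces are intersections of facets, so they add nothing to the condition. The plan is to reduce \eqref{eq:assumption_shrinking} to a statement about the individual linear constraints, using the identity
\[
\mathcal{H}^c_{\mathbf{x}}(\Sigma)=\{\mathbf{y}\in E\mid \ell_k(\mathbf{y})\le c\,b_k+(1-c)\ell_k(\mathbf{x})\ \forall k\}.
\]

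\emph{Necessity.} Suppose $\Sigma_i\subset\mathcal{H}^c_{\mathbf{x}_i}(\Sigma)$ with $c<1$. Take $k\in J_i$ and a point $\mathbf{y}\in\Sigma_i\cap F_k$, so $\ell_k(\mathbf{y})=b_k$. The constraint then gives $b_k\le c\,b_k+(1-c)\ell_k(\mathbf{x}_i)$, which means $\ell_k(\mathbf{x}_i)\ge b_k$. Since $\mathbf{x}_i\in\Sigma$ we also have $\ell_k(\mathbf{x}_i)\le b_k$, so $\mathbf{x}_i\in F_k$. Hence $\mathbf{x}_i\in\bigcap_{k\in J_i}F_k$, and this intersection is non-empty.

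\emph{Sufficiency.} Pick $\mathbf{x}_i\in\bigcap_{k\in J_i}F_k$. For $k\in J_i$ we have $\ell_k(\mathbf{x}_i)=b_k$, so the constraint $\ell_k\le c\,b_k+(1-c)\ell_k(\mathbf{x}_i)=b_k$ holds on $\Sigma_i\subset\Sigma$ for every $c$. For $k\notin J_i$, the compact set $\Sigma_i$ does not meet $F_k$, so $m_{i,k}:=\max_{\Sigma_i}\ell_k<b_k$. We have $\ell_k(\mathbf{x}_i)\ge \min_\Sigma\ell_k=:a_k$, so the constraint is implied by $m_{i,k}\le c\,b_k+(1-c)a_k$, i.e.\ by
\[
c\ \ge\ 1-\frac{b_k-m_{i,k}}{b_k-a_k}.
\]
The right-hand side is $<1$ because $a_k<b_k$ for a full-dimensional $\Sigma$. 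Since $I$ and $[K]$ are finite, we take $c$ to be the maximum of these quantities together with, say, $1/2$. This $c$ lies in $(0,1)$ and works for all $i$ simultaneously.

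The only delicate point is the convention on ``faces''. If lower-dimensional faces are included, the intersection is unchanged, because each such face is an intersection of facets and a cell meeting it meets all of those facets. Apart from that, the argument is bookkeeping with the half-space description, and the compactness of the cells, which makes the finite family of strict inequalities uniform, is what produces a single $c<1$.
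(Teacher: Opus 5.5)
Your proof is correct and follows essentially the paper's argument: you write $\Sigma$ and $\mathcal{H}^c_{\mathbf{x}}(\Sigma)$ as intersections of half-spaces, note that each facet met by $\Sigma_i$ forces $\mathbf{x}_i$ onto that facet, and use compactness of $\Sigma_i$ to get slack in the remaining constraints once $c$ is close to $1$. The differences are cosmetic. You make the choice of a single $c$ uniform over the finitely many cells explicit, whereas the paper proves the equivalence for one closed set $\Lambda\subset\Sigma$ and leaves the maximum over $i$ implicit. You also spell out why lower-dimensional faces do not change the condition.
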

Observe that when $\Sigma$ is a simplex, the condition in the lemma is equivalent to saying that no $\Sigma_i$ meets all the co-dimension $1$ faces of $\Sigma$. Lemma~\ref{lem:NSC_for_shrinking} follows immediately from the following:
\begin{lemma}
Let $\Sigma \subset \R^d$ be a polytope, $\Lambda \subset \Sigma$ be closed and $\mathbf{x} \in \Sigma$. The following are equivalent:
\begin{itemize}
    \item There is $c \in (0,1)$ such that $\Lambda \subset \mathcal{H}_{\mathbf{x}}^c(\Sigma)$,
    \item $\mathbf x$ belongs to all the faces of $\Sigma$ that meet $\Lambda$.
\end{itemize}
\end{lemma}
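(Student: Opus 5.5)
The plan is to work with an H-representation of $\Sigma$ inside its affine span: write $\Sigma=\{\mathbf{y}\in\aff(\Sigma)\mid \ell_k(\mathbf{y})\leq b_k,\ k\in[K]\}$ with affine functionals $\ell_k$, so that the facets are $F_k=\Sigma\cap\{\ell_k=b_k\}$. Every face is an intersection of facets, and a face meets $\Lambda$ only if every facet containing it meets $\Lambda$. Hence the second condition is equivalent to: $\ell_k(\mathbf{x})=b_k$ for every $k$ such that $F_k\cap\Lambda\neq\emptyset$. Using the formula $\mathcal{H}_{\mathbf{x}}^c(\mathbf{y})=c\mathbf{y}+(1-c)\mathbf{x}$, one gets
\[\mathcal{H}_{\mathbf{x}}^c(\Sigma)=\{\mathbf{y}\mid \ell_k(\mathbf{y})\leq c\,b_k+(1-c)\ell_k(\mathbf{x})\ \forall k\},\]
so $\Lambda\subset\mathcal{H}_{\mathbf{x}}^c(\Sigma)$ if and only if for every $k$ and every $\mathbf{y}\in\Lambda$,
\[b_k-\ell_k(\mathbf{y})\geq(1-c)\big(b_k-\ell_k(\mathbf{x})\big).\]

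For the first implication, assume $\Lambda\subset\mathcal{H}^c_{\mathbf{x}}(\Sigma)$. If $\mathbf{y}\in\Lambda\cap F_k$, the left side is $0$. Since $1-c>0$ and $b_k-\ell_k(\mathbf{x})\geq0$ because $\mathbf{x}\in\Sigma$, this forces $\ell_k(\mathbf{x})=b_k$, so $\mathbf{x}\in F_k$. As explained above, this gives that $\mathbf{x}$ lies in every face meeting $\Lambda$.

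For the converse, split the indices $k$ into two groups.
\begin{itemize}
\item If $F_k$ meets $\Lambda$, then by hypothesis $b_k-\ell_k(\mathbf{x})=0$, and the inequality holds trivially for any $c$.
\item If $F_k\cap\Lambda=\emptyset$, then $b_k-\ell_k$ is a continuous function that is strictly positive on the compact set $\Lambda$. It is compact because it is closed in the compact polytope $\Sigma$. So its minimum $m_k$ over $\Lambda$ is positive. Also $b_k-\ell_k(\mathbf{x})\leq M_k:=\max_\Sigma(b_k-\ell_k)$, which is finite.
\end{itemize}
Choose $c\in(0,1)$ with $1-c\leq\min_k m_k/M_k$; this minimum runs over the finitely many indices in the second group, and $c$ is arbitrary if that group is empty. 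With this choice every inequality holds, so $\Lambda\subset\mathcal{H}^c_{\mathbf{x}}(\Sigma)$.

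The only delicate point is the reduction from "all faces" to "all facets": every nonempty proper face is an intersection of facets, and the face $\Sigma$ itself trivially contains $\mathbf{x}$. Compactness of $\Lambda$ is what makes the strict positivity uniform. Finally, Lemma~\ref{lem:NSC_for_shrinking} follows by applying the equivalence with $\Lambda=\Sigma_i$, choosing $\mathbf{x}_i$ in the intersection of the faces meeting $\Sigma_i$, and taking the largest of the finitely many constants $c_i$.
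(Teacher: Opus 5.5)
Your proof is correct and takes essentially the same route as the paper. Both use an H-representation of $\Sigma$ and note that $\mathcal{H}^c_{\mathbf{x}}$ turns each constraint $\ell_k\le b_k$ into $\ell_k\le c\,b_k+(1-c)\ell_k(\mathbf{x})$; the facets meeting $\Lambda$ then force $\mathbf{x}$ onto them, and compactness of $\Lambda$ gives uniform slack on the remaining constraints, so $c$ close to $1$ works. Your explicit reduction from arbitrary faces to facets, and your choice to work inside $\aff(\Sigma)$, fill in steps the paper leaves implicit, since its proof only discusses the faces $\{\varphi_k=0\}$.
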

\begin{proof} Write $\Sigma = \bigcap_{k=1}^K \{\varphi_k \leq 0\}$ as a finite intersection of half-spaces, where $\varphi_k:\R^d \to \R$ are affine maps. Then we have $\mathcal{H}_{\mathbf{x}}^c(\Sigma) = \bigcap_{k=1}^K \{\varphi_k \leq (1-c) \varphi_k(\mathbf{x})\}$, so $\Lambda \subset \mathcal{H}_{\mathbf{x}}^c(\Sigma)$ is equivalent to $\max_\Lambda \varphi_k \leq (1-c) \varphi_k(\mathbf{x})$ for every $k$. If $\Lambda$ meets the face $\{\varphi_k=0\}$, this inequality holds if and only if $0 =\varphi_k(\mathbf{x})$, that is $\mathbf{x}$ belongs to it. If $\Lambda$ does not meet the face $\{\varphi_k=0\}$, then $\max_\Lambda \varphi_k<0$ because $\Lambda$ is closed, and this inequality holds for $c$ close enough to $1$.
\end{proof}
\begin{remark}\label{rem: choice point}
From the above, we see that each $\mathbf{x}_i$ in condition \eqref{eq:assumption_shrinking} must be chosen in the intersection of all the faces of $\Sigma$ that meet $\Sigma_i$, and this set must be non-empty. If no face of $\Sigma$ intersects $\Sigma_i$, then $\mathbf{x}_i\in \Sigma$ can be chosen arbitrarily. 
\end{remark}

As a consequence of the previous lemmas, we deduce the following:
\begin{proposition}\label{prop: fine polytope}
If $(\Sigma_i)_{i \in I}$ is a fair partition of a  polytope with $|I|>1$, then there is $k\in \N$ such that the repeated partition $(\Sigma_i^{(k)})_{i \in I^k}$ satisfies \eqref{eq:assumption_shrinking} for some $c=c_{\Sigma,(\Sigma_i)_{i\in I}}\in (0,1)$.
\end{proposition}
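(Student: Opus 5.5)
The plan is to reduce Proposition~\ref{prop: fine polytope} to Lemma~\ref{lem:NSC_for_shrinking}. It suffices to find $k$ such that every cell $\Sigma_i^{(k)}$ of the $k$-fold iterated partition meets only faces of $\Sigma$ whose intersection is non-empty. Since $I^k$ is finite, the existence of a uniform $c\in(0,1)$ is automatic: each cell has its own $c_i$ and we take $c=\max_i c_i$. Note also that enlarging $c$ preserves \eqref{eq:assumption_shrinking}, because for $c\le c'$ and $\mathbf{x}\in\Sigma$ we have $\mathcal{H}^c_{\mathbf{x}}(\Sigma)\subset\mathcal{H}^{c'}_{\mathbf{x}}(\Sigma)$ by convexity.

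First step: a quantitative geometric fact about $\Sigma$. Let $\mathcal{F}$ be the finite family of faces of $\Sigma$, and let $\mathcal{B}$ be the set of subfamilies $\mathcal{G}\subset\mathcal{F}$ with $\bigcap\mathcal{G}=\emptyset$. For each such $\mathcal{G}$, the faces are compact with empty common intersection. Hence the continuous function $\mathbf{y}\mapsto\max_{F\in\mathcal{G}} d(\mathbf{y},F)$ is positive on $\Sigma$, and by compactness it is bounded below by some $\eta_{\mathcal{G}}>0$. Let $\eta=\min_{\mathcal{G}\in\mathcal{B}}\eta_{\mathcal{G}}>0$. Then any subset of $\Sigma$ of diameter $<\eta$ cannot meet all faces of any $\mathcal{G}\in\mathcal{B}$. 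Indeed, if it met each $F\in\mathcal{G}$, then any point $\mathbf{y}$ of it would satisfy $d(\mathbf{y},F)<\eta$ for all $F\in\mathcal{G}$, a contradiction. Consequently the family of faces met by such a set has non-empty intersection.

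Second step: diameters shrink under iteration of a fair partition. By fairness with $|I|=N>1$, each $\Sigma_i$ equals $N^{-1/m}g_i(\Sigma)+\mathbf{v}_i$ with $g_i$ orthogonal, where $m=\dim\Sigma$. So $\operatorname{diam}\Sigma_i=N^{-1/m}\operatorname{diam}\Sigma$. The iterated partition is defined by transporting the partition through these similarity maps, so by induction each cell of $(\Sigma_i^{(k)})$ is the image of $\Sigma$ under a similarity of ratio $N^{-k/m}$. Its diameter is therefore $N^{-k/m}\operatorname{diam}\Sigma$, which tends to $0$. Choose $k$ with $N^{-k/m}\operatorname{diam}\Sigma<\eta$.

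Conclusion: for this $k$, each $\Sigma_i^{(k)}$ satisfies the criterion of Lemma~\ref{lem:NSC_for_shrinking}. If it meets no face, we use Remark~\ref{rem: choice point}. The lemma then gives \eqref{eq:assumption_shrinking}. The only delicate point is the first step, namely the uniform separation $\eta$ for non-intersecting families of faces; it is handled by finiteness and compactness as above.
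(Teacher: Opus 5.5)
Your proposal is correct and follows the paper's proof essentially verbatim. Your $\eta$ is the paper's $\delta=\min_{S}\min_{\mathbf{x}\in\Sigma}\max_{F\in S} d(\mathbf{x},F)$, taken over families $S$ of faces with empty intersection; you then shrink the diameters of the iterated fair cells below it and conclude with Lemma~\ref{lem:NSC_for_shrinking}. Your remark that a uniform $c$ comes from taking the maximum of the finitely many per-cell constants, using that $\mathcal{H}^c_{\mathbf{x}}(\Sigma)$ increases with $c$ for $\mathbf{x}\in\Sigma$, is a small detail the paper leaves implicit.
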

\begin{proof}
Let $\mathcal{F}$ be the set of faces of $\Sigma$. For any family of faces $S\subset \mathcal{F}$ that has empty intersection, consider the number 
\begin{equation*}
    \delta(S)=\min_{\mathbf{x}\in \Sigma}\max_{F\in S}\{d(\mathbf{x},F)\},
\end{equation*}
which is positive because the faces in $S$ have empty intersection. Define
\begin{equation*}
    \delta=\min_{\substack{
S\subset \mathcal{F}\\
\bigcap_{F\in S}F=\emptyset}}\{\delta(S)\},
\end{equation*}
which also positive due to the finitude of the families of faces with empty intersection. Now choose $k\in \N$ sufficiently large so that after carrying out $k$ successive subdivisions of our initial partition, we obtain a fair partition $(\Sigma_i^{(k)})_{i\in I^k}$ with  $\operatorname{diam}(\Sigma_i^{(k)})=|I|^{-k/(d-1)}\operatorname{diam}(\Sigma)<\delta$. For such choice, consider $S$ the set of faces of $\Sigma$ that meet $\Sigma_i^{(k)}$. We have $\delta(S) \leq \operatorname{diam}(\Sigma_i^{(k)})<\delta$, so $\bigcap_{F\in S} F\neq \emptyset$. It follows from Lemma~\ref{lem:NSC_for_shrinking} that $(\Sigma_i^{(k)})_{i \in I^k}$ satisfies \eqref{eq:assumption_shrinking}.
\end{proof}

\begin{figure}[htbp]
  \centering
  \begin{minipage}{0.46\textwidth}
    \includegraphics[width=\textwidth]{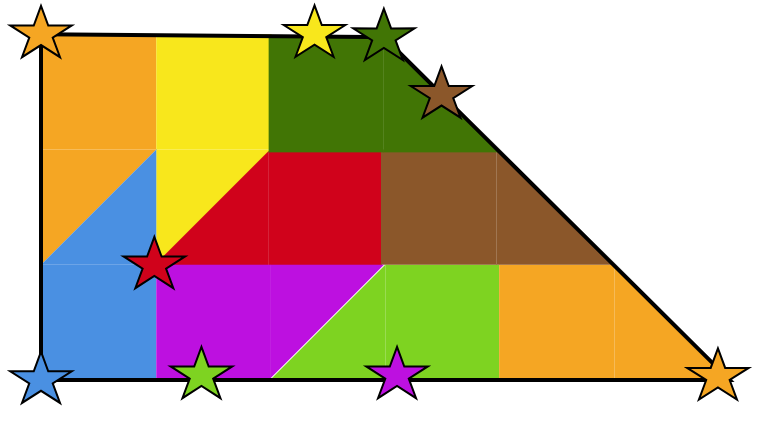}
  \end{minipage}
  \hfill
  \begin{minipage}{0.44\textwidth}
    \includegraphics[width=\textwidth]{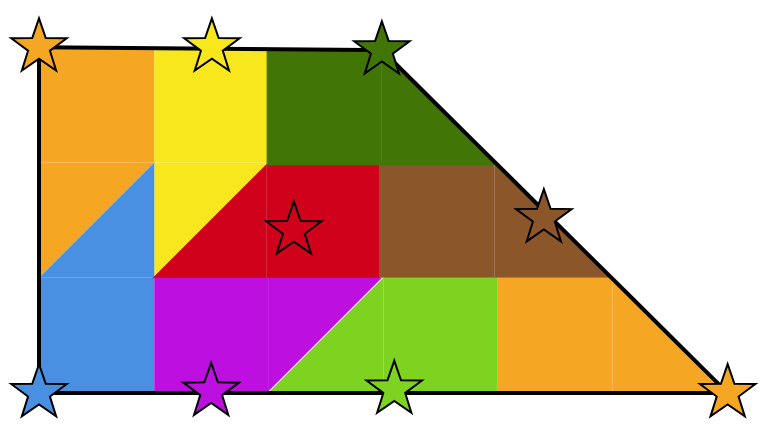}
  \end{minipage}
  \caption{Fair polytopal subdivision of a trapezium into 9 cells. Each star, corresponding to its same-coloured cell, represents an arbitrary choice of $(\mathbf{x}_i)_{i\in [9]}$ that is consistent with the rules of Remark \ref{rem: choice point}. The first picture's choice makes it impossible for $(\Sigma_i,\mathbf{x}_i)$ to be affinely separable, whilst the second choice makes it possible since $\mathbf{x}_i\in \Sigma_i$.}
  \label{fig: trapezium}
\end{figure}

\begin{figure}[htbp]
  \centering
    \includegraphics[width=0.35\textwidth]{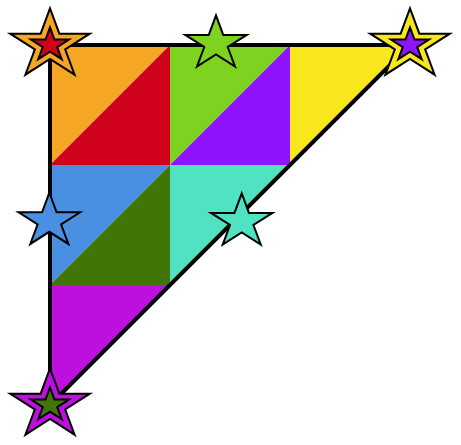}
  \hfill
  \caption{Fair simplicial subdivision of a triangle into 9 cells. Each star, corresponding to its same-coloured cell, represents an arbitrary choice of $(\mathbf{x}_i)_{i\in [9]}$ that is consistent with the rules of Remark \ref{rem: choice point}. The only allowed $\mathbf{x}_i$'s for the red, dark-green and indigo cells are not in $\Sigma_i$. However, this choice still makes $(\Sigma_i,\mathbf{x}_i)$ affinely separable.}
  \label{fig: trapezium}
\end{figure}
\begin{remark}
Note that we have not proven that any sufficiently fine polytopal partition has a family $(\Sigma_i,\mathbf{x}_i)_{i\in I}$ which is affinely separable. 
\end{remark}

\section{A fair subdivision of the $d$-simplex}\label{ap: simplex}
In this section, we first describe the Coxeter-Freudenthal-Kuhn simplicial decomposition of a $(d-1)$-simplex, and then we study its geometrical properties. The statement of Theorem~\ref{thm:CFK} was apparently first proved for powers of $2$ by Freudenthal \cite{Freudenthal42}, answering a question of Brouwer. But, up to affine transformations, Freudenthal's construction can also be derived from Coxeter's $\tilde{A}_{d-1}$ tessellation \cite{Coxeter34} (see \cite{boissonnat}). There are several expositions of the Coxeter-Freudenthal-Kuhn for arbitrary $q$. We will follow the one in the work of Mirzakhani-Vondrák \cite{RefWorks:mirzakhani2015sperners} where we learnt about it. Other presentations include \cite{RefWorks:edelsbrunner1999edgewise}, see also \cite{boissonnat} for a historical account.
 
\subsection{Fair simplicial subdivision of the $d$-simplex}\label{subsection: regular subdivision}
 Let $q\geq 1$ and consider the $(d-1)$-dimensional simplex
\begin{equation*}
    R_{d,q}=\{\mathbf{y}\in \R_+^{d-1}\mid 0\leq y_1\leq y_2\leq \dots\leq y_{d-1}\leq q\}.
\end{equation*}
We now describe the subdivision of $R_{d,q}$ into $q^{d-1}$ pieces constructed in \cite{RefWorks:mirzakhani2015sperners}. Such subdivision will consist of cells whose vertices are the set
\begin{equation*}
    W_{d,q}=\{\mathbf{v}\in \Z^{d-1}_+\mid 0\leq v_1\leq v_2\leq \dots\leq v_{d-1}\leq q\}.
\end{equation*}

A cell of the subdivision of $R_{d,q}$ is indexed by a vertex $\mathbf{w}\in W_{d,q-1}$ and a permutation $\pi: [d-1]\to [d-1]$. The permutation $\pi$ should be \textit{consistent} with $\mathbf{w}$ in the sense that whenever $w_i=w_{i+1}$, we have $\pi(i)<\pi(i+1)$. For any such pair $(\mathbf{w},\pi)$, the respective cell is defined as
\begin{equation*}
    \sigma(\mathbf{w},\pi)=\{\mathbf{y}\in \R^{d-1}_+\mid 0\leq (\mathbf{y}-\mathbf{w})_{\pi^{-1}(1)}\leq (\mathbf{y}-\mathbf{w})_{\pi^{-1}(2)}\leq \dots\leq (\mathbf{y}-\mathbf{w})_{\pi^{-1}(d-1)}\leq 1\}.
\end{equation*}
We warn the reader that this formula is correct, but it does not match the one in \cite{RefWorks:mirzakhani2015sperners} where the inverse was missing in $\pi^{-1}$; with the wrong formula it is not true that $\sigma(\mathbf{w},\pi)\subset R_{d,d}$.

Mirzakhani and Vondrák \cite{RefWorks:mirzakhani2015sperners} prove that the collection \begin{equation}\label{eq:MVsubdivision}\big\{ \sigma(\mathbf{w},\pi) \mid \mathbf{w}\in W_{d,q-1}, \pi \in \mathrm{Sym}(d-1)\textrm{ which is consistent with }\mathbf{w}\big\}
\end{equation}
is a simplicial subdivision into $q^{d-1}$ pieces. In fact, it is a $\mathrm{Sym}(d-1)$-fair subdivision for the natural $\mathrm{Sym}(d-1) \to \mathrm{O}(d-1)$:
\[ \sigma(\mathbf{w},\pi) = \frac 1 q \pi(R_{d,q})+\mathbf{w}.\]

\subsection{Corners and subdivision properties}
Fix $q\geq d$. The  polytope $\Sigma=R_{d,q}$ is a $(d-1)$-simplex in $\R^d$ with extreme points $\mathbf{x}_1=(q,q,\dots,q)$, $\mathbf{x}_2=(0,q,\dots,q)$, ..., $\mathbf{x}_{d-1}=(0,0,\dots,0,q)$, $\mathbf{x}_{d}=(0,0,\dots,0)$. 
Consider the ``corner pieces''
\begin{equation*}
    \widetilde{\Sigma_k}=\mathcal{H}_{\mathbf{x}_i}^{\frac{q-1}{q}}(\Sigma),\qquad k\in [d].
\end{equation*}
The next proposition asserts that, for these choices of parameters, the subdivision we just introduced has all the desired properties to perform our Perron tree construction with it:

\begin{figure}[htbp]
  \centering
  \makebox[0pt][c]{\hspace{9cm}\begin{minipage}{0.3\textwidth}
    \includegraphics[width=\textwidth]{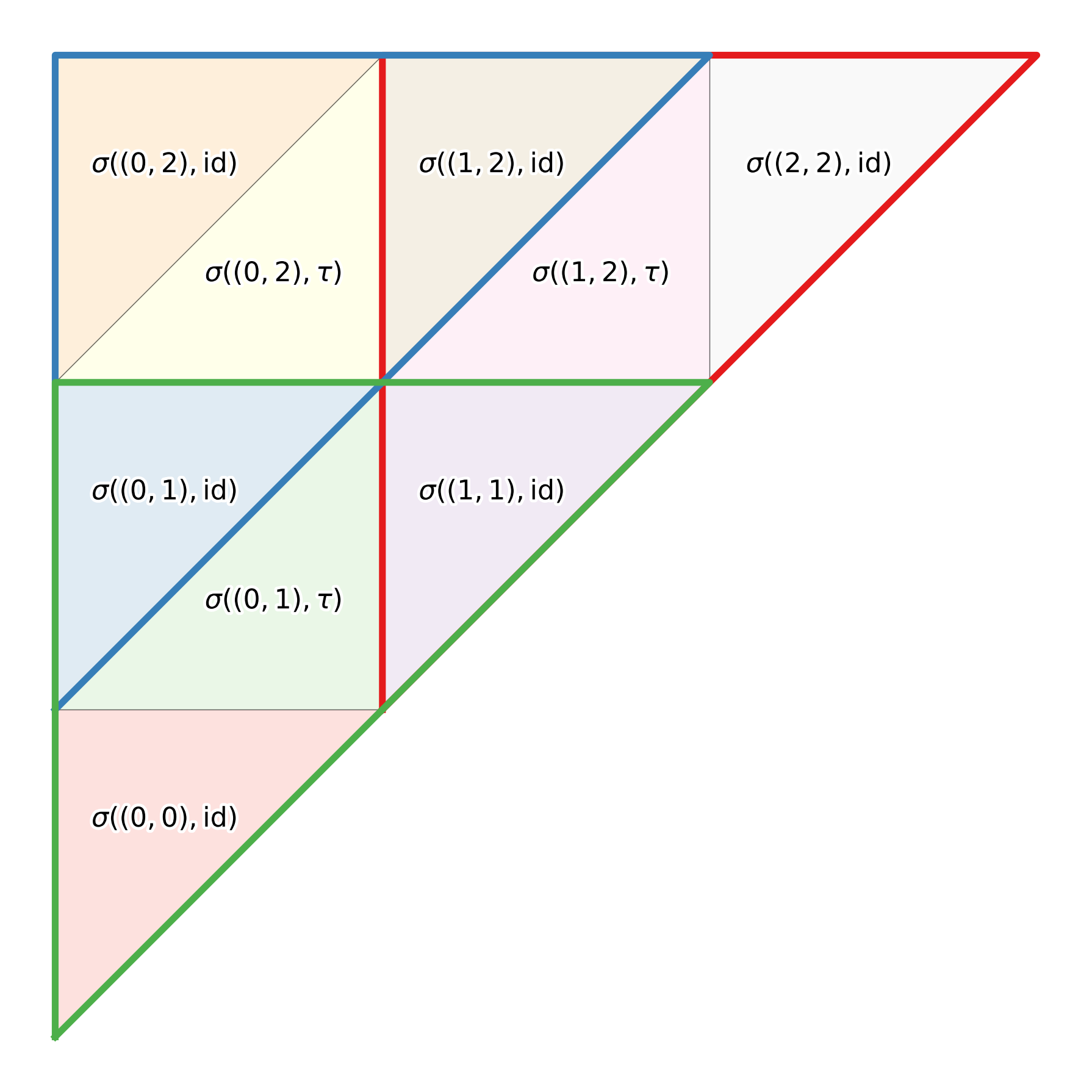}
  \end{minipage}}
  \hfill
 \makebox[0pt][c]{\hspace{-9cm} \begin{minipage}{0.4\textwidth}
    \includegraphics[width=\textwidth]{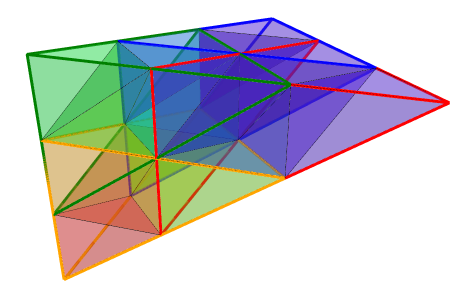}
  \end{minipage}}
  \caption{Plot of the fair simplicial subdivisions of $R_{3,3}$ and $R_{4,3}$ described in this section, together with the wire frames for their respective corner pieces. The labelling of each cell has been made explicit in $R_{3,3}$, with $\tau=(1,2)$.}
  \label{fig: MV subdiv}
\end{figure}
\begin{proposition}\label{lemma: cells embedding} Let $(\Sigma_i)_{i\in I}=(\sigma(\mathbf{w},\pi))_{(\mathbf{w},\pi)}$ be the Coxeter-Freudenthal-Kuhn fair subdivision  of $\Sigma=R_{d,q}$ into $q^{d-1}$ pieces with $q \geq d$, then \eqref{eq:assumption_shrinking} holds with $c=\frac{q-1}{q}$. It is also affinely separated.
\end{proposition}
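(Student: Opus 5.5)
The plan is to work with the $d$ affine functions cutting out $\Sigma=R_{d,q}$:
\[
\psi_0(\mathbf y)=y_1,\qquad \psi_k(\mathbf y)=y_{k+1}-y_k \ (1\le k\le d-2),\qquad \psi_{d-1}(\mathbf y)=q-y_{d-1}.
\]
They are nonnegative on $\Sigma$ and satisfy $\sum_k\psi_k\equiv q$, so $\psi_k/q$ are the barycentric coordinates of $\Sigma$. Each $\psi_k$ vanishes on one facet and equals $q$ at the opposite vertex, which I call $\mathbf{x}^{(k)}$. The formula $\mathcal{H}^{c}_{\mathbf x}(\Sigma)=\bigcap_k\{\varphi_k\le (1-c)\varphi_k(\mathbf x)\}$ from the proof of Lemma~\ref{lem:NSC_for_shrinking} then identifies the corner piece:
\[
\mathcal{H}^{(q-1)/q}_{\mathbf{x}^{(k)}}(\Sigma)=\Sigma\cap\{\psi_k\ge 1\}.
\]
Thus \eqref{eq:assumption_shrinking} with $c=\frac{q-1}{q}$ reduces to the following claim: for every cell $\sigma(\mathbf w,\pi)$ there is an index $k$ with $\psi_k\ge 1$ on the whole cell. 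For that cell we will set $\mathbf x_i=\mathbf x^{(k)}$.

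To prove the claim, write $\mathbf y=\mathbf w+\mathbf z$ with $\mathbf z$ in the unit simplex $\{0\le z_{\pi^{-1}(1)}\le\dots\le z_{\pi^{-1}(d-1)}\le 1\}$, so every $z_j\in[0,1]$. Then:
\begin{itemize}
\item $\psi_0(\mathbf y)=w_1+z_1\ge w_1$;
\item $\psi_k(\mathbf y)=\psi_k(\mathbf w)+(z_{k+1}-z_k)\ge \psi_k(\mathbf w)-1$ for $1\le k\le d-2$;
\item $\psi_{d-1}(\mathbf y)\ge q-w_{d-1}-1$.
\end{itemize}
Since $\mathbf w\in W_{d,q-1}$, the integers $\psi_k(\mathbf w)$ are nonnegative, with $\psi_{d-1}(\mathbf w)\ge 1$, and they sum to $q\ge d$. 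Two cases cover everything:
\begin{itemize}
\item if some $\psi_k(\mathbf w)\ge 2$, that $k$ works, for any of the three types of $k$;
\item otherwise all $\psi_k(\mathbf w)=1$ (forcing $q=d$), and then $\psi_0\ge w_1=1$ on the cell, so $k=0$ works.
\end{itemize}
Consistency of $\pi$ with $\mathbf w$ gives a finer bound: $\psi_k(\mathbf y)\ge\psi_k(\mathbf w)$ whenever $\psi_k(\mathbf w)=0$. This is not needed for the claim but is useful below.

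For affine separation, the cells are alcoves of the arrangement of hyperplanes $\{y_a=m\}$ and $\{y_a-y_b=m\}$ with $m\in\Z$. Any two distinct cells are therefore separated by one of these hyperplanes. Up to an additive integer, every such functional is a sum of consecutive $\psi_k$'s: $y_b-y_a=\psi_a+\dots+\psi_{b-1}$ and $y_a=\psi_0+\dots+\psi_{a-1}$. Consequently the separating $\varphi$ takes only the values $-m$ or $q-m$ at the vertices $\mathbf x^{(k)}$, according to whether $k$ lies inside or outside the relevant block of consecutive indices. The required inequality $\varphi(\mathbf x_i)\ge\varphi(\mathbf x_j)$ then becomes purely combinatorial. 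If $\mathbf x_i=\mathbf x_j$ it holds trivially, and the sign of $\varphi$ can be flipped to match the sides. Otherwise it says that the chosen index $k_i$ lies in the block whenever $\Sigma_i$ is on the positive side, or that $k_j$ lies outside it.

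So I would fix a canonical rule for $k_i$: take the smallest $k$ with $\psi_k(\mathbf w)$ maximal, and then check that among all hyperplanes separating $\Sigma_i$ from $\Sigma_j$ one can always pick one compatible with $(k_i,k_j)$. The heuristic is that if $\psi_{k_i}\ge1$ on $\Sigma_i$ and $\psi_{k_j}\ge1$ on $\Sigma_j$, then a cell lying further in direction $\mathbf x^{(k)}$ should carry the label $k$. Making this precise is the main obstacle. I expect to need an explicit description of the separating hyperplanes in terms of $(\mathbf w,\pi)$, with a case split according to whether $\mathbf w=\mathbf w'$, in which case the permutations differ by a transposition-type wall $y_a-y_b=m$, or $\mathbf w\ne\mathbf w'$, in which case I would choose a coordinate-type wall $\psi$-block where the labels differ. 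If the canonical rule fails in some configuration, I would first try adjusting it to a tie-breaking rule that favours $k=0$ or $k=d-1$.
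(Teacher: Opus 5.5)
Your verification of \eqref{eq:assumption_shrinking} with $c=\frac{q-1}{q}$ is correct. It differs slightly from the paper's route: you read off a good index directly from the integer vector $\psi(\mathbf w)$ by pigeonhole. The paper instead first shows that, for every $k$, each cell lies entirely in $\{\psi_k\ge 1\}$ or entirely in $\{\psi_k\le 1\}$, and then sums the $\psi_k$ at an interior point.

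The genuine gap is affine separation. You reduce it to a combinatorial condition on block hyperplanes and propose a labelling. You never prove that, for every pair of cells, the labels are compatible with some separating hyperplane, and you leave open whether the rule needs changing. This is not a formality. When two cells share a facet, the separating functional is unique up to a positive factor, so $\varphi(\mathbf x_i)\ge\varphi(\mathbf x_j)$ is a real constraint on the choice of the $\mathbf x_i$.

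The idea the paper uses is to combine the one-side property with the rule ``$k_\sigma$ is the smallest $k$ with $\psi_k\ge 1$ on $\sigma$''. The one-side property comes from the case split $\psi_k(\mathbf w)\ge 2$, $\psi_k(\mathbf w)=0$, $\psi_k(\mathbf w)=1$; the last case uses the order of $z_k,z_{k+1}$ fixed by $\pi$. If $k_\sigma<k_{\sigma'}$, minimality of $k_{\sigma'}$ and the one-side property give $\psi_{k_\sigma}\le 1$ on $\sigma'$. So $\varphi=\psi_{k_\sigma}-1$ separates the two cells, with $\varphi(\mathbf x^{(k_\sigma)})=q-1>-1=\varphi(\mathbf x^{(k_{\sigma'})})$.

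Your own rule (smallest maximiser of $\psi(\mathbf w)$) also works; it agrees with your first part, since the maximum is at least $2$ unless all $\psi_k(\mathbf w)=1$, in which case $k=0$ is chosen. It can be completed using only coordinate walls. Cells with the same $\mathbf w$ get the same label, so suppose $\sigma,\sigma'$ sit at $\mathbf w\neq\mathbf w'$ with labels $k<k'$. Recall $y_a(\mathbf x^{(j)})=q$ if $j<a$ and $0$ otherwise.
\begin{itemize}
\item If $w_b>w'_b$ for some $b$, take $\varphi=y_b-w'_b-1$.
\item If $w_a<w'_a$ for some $a\notin\{k+1,\dots,k'\}$, take $\varphi=w_a+1-y_a$.
\end{itemize}
In both cases $\varphi\ge 0$ on $\sigma$, $\varphi\le 0$ on $\sigma'$, and $\varphi(\mathbf x^{(k)})\ge\varphi(\mathbf x^{(k')})$. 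Otherwise $\mathbf w\le\mathbf w'$ coordinatewise, with equality outside $\{k+1,\dots,k'\}$. Then $\psi_k(\mathbf w')\ge\psi_k(\mathbf w)\ge\psi_{k'}(\mathbf w)\ge\psi_{k'}(\mathbf w')$, contradicting that $k'$ is the smallest maximiser of $\psi(\mathbf w')$. Your proposal needs an argument of this kind in place of the heuristic.
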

\begin{proof}
Let us define dummy variables $y_0=w_0=0$ and $y_d=w_d=q$ and the difference functionals $\Delta_k(\mathbf{y})=y_k-y_{k-1}$ for $k\in [d]$. With these definitions, it is immediate to check that 
\begin{equation*}
    \Sigma=\{\mathbf{y}\in \R^{d-1}\mid  \Delta_k(\mathbf{y})\geq 0,\;k\in [d]\},\qquad \widetilde{\Sigma_k}=\{\mathbf{y}\in \Sigma\mid \Delta_k(\mathbf{y})\geq 1\}.
\end{equation*}
A cell $\sigma(\mathbf{w},\pi)$ is formed by points $\mathbf{y}=\mathbf{w}+\mathbf{z}$, where $z_{\pi^{-1}(1)}\leq \dots z_{\pi^{-1}(d-1)}\leq 1$. This imposes $z_j\in [0,1]$ for all $j\in [d-1]$, and fixes the relative ordering of the coordinates:
\begin{equation*}
    z_{k-1}\leq z_k\quad \Longleftrightarrow\quad \pi(k-1)<\pi(k).
\end{equation*}
For any $\mathbf{y}\in \sigma(\mathbf{w},\pi)$, we substitute $y_j=w_j+z_j$:
\begin{equation*}
    \Delta_k(\mathbf{y})=(w_k-w_{k-1})+(z_k-z_{k-1}).
\end{equation*}
Let $D_k=w_k-w_{k-1}$. Since $\mathbf{w}$ is non-decreasing, $D_k\geq 0$ is an integer. Because $z_j\in [0,1]$, then $(z_k-z_{k-1})\in [-1,1]$. We analyse the range of $\Delta_k(\mathbf{y})$ over the whole cell:
\begin{enumerate}
    \item If $D_k\geq 2$: $\Delta_k(\mathbf{y})\geq 2-1=1$. 
    \item If $D_k=0$: $\Delta_k(\mathbf{y})\leq 0+1=1$.
    \item If $D_k=1$: \begin{itemize}
        \item If $\pi(k-1)<\pi(k)$, then $z_{k-1}\leq z_k$, and thus $\Delta_k(\mathbf{y})\geq 1+0=1$.
        \item If $\pi(k-1)>\pi(k)$, then $z_{k-1}\geq z_k$, and thus $\Delta_k(\mathbf{y})\leq  1+0=1$ 
    \end{itemize}
\end{enumerate}
In every possible configuration, the cell $\sigma(\mathbf{w},\pi)$ lies completely on one side of the hyperplane $\Delta_k=1$, i.e. either $\Delta_k(\sigma)\leq 1$ or $\Delta_k(\sigma)\geq 1$.  Therefore, no cell straddles the boundary of any corner piece.

Moreover, there is at least one $k$ such that $\sigma \subset \widetilde{\Sigma}_k$. Indeed, otherwise by the preceding if $\mathbf{y}$ belongs to the interior of $\sigma$ then $\Delta_k(\sigma)<1$ for all $k\in [d]$, and
\[ q= y_d-y_0 = \sum_{k=1}^d \Delta_k(\mathbf y) <d,\]
a contradiction with the assumption that $q\geq d$. 

Define $\mathbf{x}_\sigma$ as $\mathbf{x}_k$ for the smallest $k$ such that $\Delta_k \geq 1$ on $\sigma$. This means that $\sigma$ is contained in $\widetilde{\Sigma}_k$, so \eqref{eq:assumption_shrinking} holds with $c=\frac{q-1}{q}$.

Let us show that partition $(\sigma,\mathbf{x}_\sigma)_\sigma$ is affinely separated. Let $\sigma,\sigma'$ be two distinct cells, and $k,k'$ the indices such that $\mathbf{x}_\sigma= \mathbf{x}_k$ and $\mathbf{x}_{\sigma'}=\mathbf{x}_{k'}$. If $k=k'$ any affine map separating $\sigma$ from $\sigma'$ also separates $\mathbf{x}_{\sigma}$ from $\mathbf{x}_{\sigma'}$. Otherwise we can assume $k<k'$. 
Then $\Delta_k \geq 1$ on $\sigma$, while $\Delta_k \leq 1$ on $\sigma'$. Moreover, we have $\Delta_k(\mathbf{x}_k)=q > 0=\Delta_k(\mathbf{x}_{k'})$, so the affine map $\Delta_k-1$ separates $(\sigma,\mathbf{x}_\sigma)$ from $(\sigma',\mathbf{x}_{\sigma'})$. 
\end{proof}

\begin{figure}[htbp]
  \centering
 \makebox[0pt][c]{\hspace{9cm} \begin{minipage}{0.3\textwidth}
    \includegraphics[width=\textwidth]{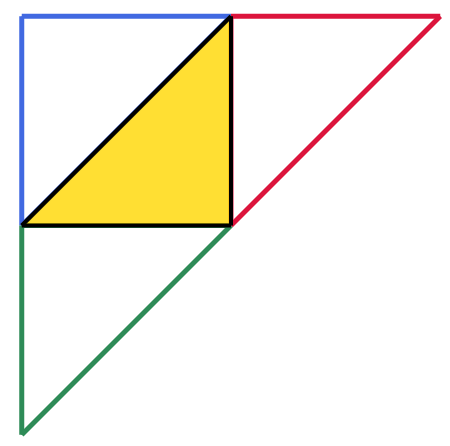}
  \end{minipage}}
  \hfill
  \makebox[0pt][c]{\hspace{-9cm}\begin{minipage}{0.4\textwidth}
    \includegraphics[width=\textwidth]{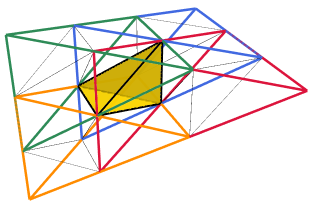}
  \end{minipage}}
  \caption{Plot of the fair simplicial subdivisions of $R_{3,2}$ and $R_{4,3}$ described in this section, together with the wire frames for their respective corner pieces for $c=\frac{q-1}{q}$. The highlighted cell is not contained in any corner piece, which illustrates the necessity of the condition $q\geq d$ for Proposition \ref{lemma: cells embedding} to hold true.}
\end{figure}

\end{appendix}
\bibliographystyle{alpha}
\bibliography{biblio}

\end{document}